\def\url@leostyle{%
  \@ifundefined{selectfont}{\def\UrlFont{\sf}}{\def\UrlFont{\small\sffamily}}}
\numberwithin{equation}{section}
\numberwithin{figure}{section}
\theoremstyle{plain}
\newtheorem{defi}{Definition}
\newtheorem{thm}{\protect\theoremname}
  \theoremstyle{plain}
  \newtheorem{cor}{\protect\corollaryname}
  \theoremstyle{plain}
  \newtheorem{lem}{\protect\lemmaname}
  \theoremstyle{remark}
  \newtheorem{rem}{\protect\remarkname}
    \newtheorem{obs}{Remark}
 \theoremstyle{plain}
 \newtheorem{lettertheorem}{Theorem}
  \theoremstyle{plain}
  \newtheorem{letterlemma}[lettertheorem]{Lemma}
  \theoremstyle{plain}
\DeclareMathOperator {\diam}{diam}
\DeclareMathOperator {\loc}{loc}
\newcommand{\R}{\mathbb{R}}
  \providecommand{\lemmaname}{Lemma}
   \providecommand{\corollaryname}{Corollary}
  \providecommand{\remarkname}{Remark}
\providecommand{\theoremname}{Theorem}
\begin{document}

\title[Improved fractional Poincar\'e type inequalities on John domains]{Improved  fractional Poincar\'e type inequalities in John domains}

\author[M. E. Cejas]{Mar\'ia Eugenia Cejas}
\address[M. E. Cejas]{ Departmento de Matem\'atica, Universidad Nacional de La Plata, CONICET, La Plata, Argentina}
\email{ecejas@mate.unlp.edu.ar}

\author[I. Drelichman]{Irene Drelichman }
\address[I. Drelichman]{IMAS (UBA-CONICET), Facultad de Ciencias Exactas y Naturales, Universidad de Buenos Aires, Ciudad Universitaria, 1428 Buenos Aires, Argentina}
\email{irene@drelichman.com}

\author[J.C. Mart\'{i}nez-Perales]{Javier C. Mart\'{i}nez-Perales}
\address[J.C. Mart\'{i}nez-Perales]{
BCAM\textendash  Basque Center for Applied Mathematics, Bilbao, Spain}
\email{jmartinez@bcamath.org}

\begin{abstract}
	We obtain improved fractional Poincar\'e inequalities in John domains of a metric space $(X, d)$ endowed with a doubling measure $\mu$ under  some mild regularity conditions on the measure $\mu$. We also give sufficient conditions on a bounded domain to support  fractional Poincar\'e type inequalities in this setting.
\end{abstract}

\maketitle

\section{Introduction}

The aim of this paper is to study improved fractional Poincar\'e type inequalities in a bounded  John domain $\Omega \subset X$, where $(X,d,\mu)$ is a metric space endowed with a doubling measure $\mu$.

 Recall that, roughly speaking, $\Omega$ is a John domain if it has a central point such that any other point can be connected to it without getting too close to the boundary (see Section \ref{preliminares} for a precise definition), and that this is essentially the largest class of domains in $\R^n$ for which the Sobolev-Poincar\'e inequality with Lebesgue measure,
\begin{equation}\label{sobolev poincare}
\|u-u_{\Omega}\|_{L^\frac{np}{n-p}(\Omega)} \leq C \left(\int_{\Omega} |\nabla u(x)|^p \,dx \right)^{\frac{1}{p}},
\end{equation} 
 holds (see \cite{Ma,Re,Bo, Ha2} for the sufficiency, and \cite{BK95} for the necessity). Here,  $u$ is a locally Lipschitz function, $1\leq p < n$ and $u_\Omega$ is the average of $u$ over $\Omega$.

The above inequality, also called $(\frac{np}{n-p}, p)$-Poincar\'e inequality, is a special case of a larger family of so-called improved Poincar\'e inequalities, which are $(q,p)$-Poincar\'e inequalities with a weight that is a power of the distance to the boundary $d(x)$, namely,
$$
\|u-u_{\Omega}\|_{L^q(\Omega)} \leq C \|d^{\alpha}|\nabla u|\|_{L^p(\Omega)}
$$
where $1\le p\le q \le \frac{np}{n-p(1-\alpha)}$,  $p(1-\alpha)<n$ and $\alpha \in [0,1]$ (see \cite{BS,H1}, and also \cite{DD,ACD} for weighted versions).  The results above were based in the use of representation formulae in terms of a fractional integral, but we point out that a different approach to obtain Poincar\'e-Sobolev inequalities was introduced in \cite{FPW} (sharpened in \cite{MP}) which avoids completely any representation formula.  
See also the recent work \cite{PR} for more precise results.

It is also worth noting that these inequalities have also been studied in metric spaces with doubling measures, replacing $|\nabla u|$ by a generalized gradient (see  \cite{HK} and references therein).

In  recent years,  several authors have turned their attention to the fractional counterpart of inequality \eqref{sobolev poincare}, beginning with the work \cite{HV} where the inequality 
\begin{equation}\label{fraccionaria euclidea}
\|u-u_{\Omega}\|_{L^q(\Omega)} \leq C \left(\int_{\Omega}\int_{\Omega \cap B(x,\tau d(x))} \frac{|u(x)-u(z)|^p}{|x-z|^{n+sp}}\,dx\,dz\right)^{1/p} 
\end{equation}
was obtained for a bounded John domain $\Omega$, $s, \tau \in (0,1)$, $p<\frac{n}{s}$ and $1< p \leq q \leq \frac{np}{n-sp}$. The case $p=1$ was proved in \cite{DIV} using  the so-called Maz'ya truncation method (see \cite{Ma}) adapted to the fractional setting, which allows to obtain a strong inequality from a weak one. 

Observe that the seminorm appearing on the right hand side of inequality (\ref{fraccionaria euclidea}) is stronger than that of the usual fractional Sobolev space $W^{s,p}(\Omega)$. More precisely, if we consider $W^{s,p}(\Omega)$ to be the subspace of $L^p(\Omega)$ induced by the seminorm
$$
[f]_{W^{s,p}(\Omega)}:= \left(\int_{\Omega}\int_{\Omega} \frac{|u(x)-u(z)|^p}{|x-z|^{n+sp}}\,dx\,dz\right)^{1/p},
$$
and $\widetilde W^{s,p}(\Omega)$ to be the one induced by the seminorm
$$
[f]_{\widetilde W^{s,p}(\Omega)}:= \left(\int_{\Omega}\int_{\Omega \cap B(z,\tau d(z))} \frac{|u(x)-u(z)|^p}{|x-z|^{n+sp}}\,dx\,dz\right)^{1/p},
$$
 for fixed $s,\tau \in (0,1)$, then it is known that both spaces coincide when $\Omega$ is Lipschitz (see \cite{D06}), but there are examples of John domains $\Omega \subset \R^n$ for which the inclusion $W^{s,p}(\Omega)\subset \widetilde W^{s,p}(\Omega)$ is strict (see \cite{DD3} for this result and characterizations of both spaces as interpolation spaces). This has led to call inequality \eqref{fraccionaria euclidea} an ``improved'' fractional inequality. However, throughout this work, we will use this terminology to refer to inequalities including powers of the distance to the boundary as weights, as in the classical case. 
 
Improvements of inequality \eqref{fraccionaria euclidea} in this sense were obtained in \cite{DD2} by including powers of the distance to the boundary to appropriate powers as weights on both sides of the inequality, and in \cite{LH}, where the weights are defined by powers of the distance to a compact set of the boundary of the domain.

In this article we deal with the natural problem of extending the fractional inequalities mentioned above to metric measure spaces. To the best of our knowledge the results about fractional Poincar\'e inequalities in this setting are new. 
We will consider a metric measure space $(X,d,\mu)$, where $\mu$ is a Borel doubling measure satisfying some mild regularity assumptions.
 To this end, for a given  $\Omega\subset X$, $1\le p<\infty$, $\tau, s\in (0,1)$, and a weight $w$ (i.e., a locally integrable $\mu$-almost everywhere positive function), we define the seminorm
\[[u]_{W_{\tau}^{s,p}(\Omega,w)}:=\left(\int_\Omega\int_{\Omega\cap \{d(z,y)\leq\tau d(y)\}}\frac{|u(z)-u(y)|^pw(z,y)d\mu(z)d\mu(y)}{\mu[B(z,d(z,y))]d(z,y)^{s p}}\right)^{1/p}.
\]

It should be pointed out that this is the natural extension of the fractional Sobolev space to the context of metric spaces. Indeed, according to \cite{GKS}   the Besov space $B_{p,q}^s(X,d\mu)$ defined by the norm
$$\left(\int_X \int_X \frac{|u(x)-u(y)|^p}{\mu[B(x,d(x,y))]d(x,y)^{s p}}\,d\mu(x)\,d\mu(y) \right)^{1/p}$$ 
 coincides with the interpolation space between  $L^p(X,d\mu)$ and the Sobolev space $W^{1,p}(X,d\mu)$ in the case that the metric supports a Poincar\'e inequality. More details about this fact and Besov spaces can be found in \cite{GKS,HK,Ha,PP,Ya}.

In this work, we are interested in the study of inequalities of the form
\begin{equation}\label{Def.Poincare}
\inf_{a\in \mathbb{R}}\|u-a\|_{L^{q}(\Omega,wd\mu)}\lesssim [u]_{W_{\tau}^{s,p}\left(\Omega,vd\mu \right)},
\end{equation}	
where  $1\leq p,q< \infty$, $s,\tau\in(0,1)$ and $w,v$ are weights. We will say that $\Omega$ supports the $(w,v)$-weighted fractional $(q,p)$-Poincar\'e inequality in $\Omega$ if \eqref{Def.Poincare} holds on $\Omega$ for every function $u\in L^p(\Omega,wd\mu)$ for which the right hand side is finite. When $w$ and $v$ are defined by functions of the distance to the boundary, we shall refer to these inequalities as $(w,v)$-improved fractional inequalities.

Our first goal is to obtain  such inequalities with weights  of the form $w_{\phi }^F(x)=\phi(d_F(x))$, where $\phi$ is a positive increasing function satisfying a certain growth condition and $F$ is a compact set in $\partial\Omega$. The parameter $F$ in the notation will be omitted whenever $F=\partial\Omega$. At the right hand side of the inequality, we will obtain a weight of the form $v_{\Phi,\gamma }^F(x,y)=\min_{z\in\{x,y\}}d(z)^{\gamma} \Phi(d_F(z))$, where $\Phi$ will be an appropriate power of $\phi$.  Our results extend and improve results in \cite{HV, LH, DD2} in several ways. On one hand, the obtained class of weights is larger than the ones previously considered, even in the Euclidean setting. On the other hand, our inequalities hold for a very general class of spaces. Among these we can find, of course, the usual Euclidean space, but other important examples are included (see the discussion in Section \ref{ejemplos}).

Our second goal is to prove a sufficient condition for a domain $\Omega$ and a function $\phi$ to support an improved $(q,p)$-Poincar\'e inequality considering weights $w_\phi$ of the same type as the ones obtained above. In the Euclidean case it is well-known that if $q<p$, inequality \eqref{fraccionaria euclidea} does not hold for general domains. Indeed, it was shown in \cite[Theorem 6.9]{HV} that there exists a $\delta$-John domain  which does not support this inequality. 
Following the ideas in \cite{HV} we obtain geometric sufficient conditions on a bounded domain $\Omega \subset X$ and a function $\phi$ to support an improved fractional $(q,p)$-Poincar\'e-Sobolev inequality when $q\leq p$ in the setting of metric measure spaces.

The rest of the paper is organized as follows: in Section \ref{preliminares} we introduce some necessary notations and previous results; Section \ref{demostraciones} is devoted to prove our $(w,v)$-improved fractional $(q,p)$-Poincar\'e inequalities for $1\le p\le q<\infty$. In Section \ref{ejemplos} we consider some special cases of those inequalities. In Section \ref{suficiencia del dominio} we give the proof of the sufficient condition for a bounded domain to support the $(q,p)$-Poincar\'e inequality for $q \le p$.   
Finally, in Section \ref{ejemplo suficiencia} we give an example of a domain satisfying the condition of Theorem \ref{sufi}.

\section{Notations and Preliminaries}\label{preliminares}

From now on $C$ and $c$ will denote constants that can change their values even within a single string of estimates. When necessary, we will stress the dependence of a constant on a particular parameter by writing it as a subindex. Also, we will use the notation $A\lesssim B$ whenever there exists a  constant $c>0$ independent of all relevant parameters for which $A\leq cB$.  Whenever $A\lesssim B$ and $B\lesssim A$, we will write $A\asymp B$.

A metric space $(X,d)$ is a set $X$ with a metric $d$, namely a nonnegative function defined on $X \times X$ such that
\begin{enumerate}
\item For every $(x,y) \in X \times X$, $d(x,y)=0$ if and only if $x=y$.
\item For every $(x,y) \in X \times X$, $d(x,y)=d(y,x)$.
\item The inequality $d(x,y) \leq d(x,z)+ d(y,z)$ holds for every $x,y,z \in X$.
\end{enumerate}

 The distance between a point $x$ and a subset $F$ of the boundary of $\Omega$ will be denoted $d_F(x):=d(x,F)$. When $F=\partial\Omega$, we will simply write $d(x):=d(x,\partial\Omega)$. For given $r>0$ and $x\in X$, the ball centered at $x$ with radius $r$ is the set $B(x,r):=\{y\in X:d(x,y)<r\}$. Given a ball $B\subset X$,  $r(B)$ will denote its radius and $x_B$ its center. For any $\lambda>0$,  $\lambda B$ will be the ball with same center as $B$ and radius $\lambda r(B)$.

A doubling metric space is a metric space $(X,d)$  with the following (geometric) doubling property:
there exists a positive integer $N\in \mathbb{N}$ such that, for every $x\in X$ and $r>0$, the ball $B(x,r)$ can be covered by at most $N$ balls $B(x_i,r/2)$ with $x_1, \dots, x_N \in X$. 

Every doubling metric space $(X,d)$ has a dyadic structure, as was proved by  Hyt\"onen and  Kairema in the following theorem \cite{HyK}:
\begin{lettertheorem}\label{dyadiccubes}
Suppose that there are constants $0<c_0\leq C_0<\infty$ and $s\in (0,1)$ such that
\[12C_0s\leq c_0.\]
Given a set of points $\{z_{j}^k\}_{j}$, $j\in\mathbb{N}$, for every $k\in\mathbb{Z}$ -- called dyadic points -- with the properties that
\begin{equation}\label{dyadic1}
d(z_i^k,z_j^k)\geq c_0s^k, \quad i\neq j,\qquad \min_{j\in\mathbb{N}}d(x,z_j^k)<c_0s^k,\quad x\in X,
\end{equation}
we can construct families of Borel sets $\tilde{Q}_j^k\subset Q_j^k\subset \overline{Q}_j^k$ -- called open, half open and closed dyadic cubes -- such that:
\begin{equation}\label{dyadic2}
\tilde{Q}_j^k\text{ and }\overline{Q}_j^k\text{ are interior and closure of }Q_j^k, \text{ respectively};
\end{equation}
\begin{equation}\label{dyadic3}
\text{if }l\geq k\text{, then either }Q_i^l\subset Q_j^k\text{ or }Q_j^k\cap Q_i^l=\emptyset;
\end{equation}
\begin{equation}\label{dyadic4}
X=\mathring{\bigcup}_{j\in\mathbb{N}}Q_j^k,\qquad k\in\mathbb{Z};
\end{equation}
\begin{equation}\label{dyadic5}
b(Q_j^k):=B(z_j^k,c_1s^k)\subset \tilde{Q}_j^k\subset Q_j^k\subset \overline{Q}_j^k\subset B(z_j^k,C_1s^k)=:B(Q_j^k),
\end{equation}
where $c_1:=\frac{c_0}{3}$ and $C_1:=2C_0$;
\begin{equation}\label{dyadic6}
\text{if }k\leq l\text{ and }Q_i^l\subset Q_j^k\text{, then }B(Q_i^l)\subset B(Q_j^k).
\end{equation}
\end{lettertheorem}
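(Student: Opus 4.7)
The plan is to construct the cubes by a hierarchical nearest-ancestor assignment on the given dyadic points, adapting M.~Christ's classical procedure for spaces of homogeneous type. First, I would define a parent map $\pi$ on the index set $\{(k,j):k\in\mathbb{Z},\ j\in\mathbb{N}\}$ by sending each $(k,j)$ to $(k-1,i)$, where $i$ minimizes $d(z_j^k,z_i^{k-1})$ among all candidates, with a fixed well-ordering of $\mathbb{N}$ used to break ties. The second part of \eqref{dyadic1} guarantees that this minimum exists and satisfies $d(z_j^k,z_{\pi(k,j)}^{k-1})<c_0 s^{k-1}$. Iterating $\pi$ then yields a tree with descendant sets $D_j^k(l):=\{i:\pi^{l-k}(l,i)=(k,j)\}$ for $l\geq k$.

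Second, I would define a Voronoi-like partition $\{W_j^k\}_j$ of $X$ at each level $k$ by $W_j^k:=\{x\in X:d(x,z_j^k)\leq d(x,z_i^k)\ \forall i\}$, applying the same tie-breaking rule so that the pieces are pairwise disjoint. The separation in \eqref{dyadic1} forces $W_j^k \supset B(z_j^k,c_0 s^k/2)$, while the covering yields $W_j^k \subset B(z_j^k,c_0 s^k)$. The half-open cube would then be set as
\[
Q_j^k := \bigcup_{l\geq k}\bigcup_{i\in D_j^k(l)} W_i^l,
\]
and $\overline{Q}_j^k$, $\tilde Q_j^k$ as its closure and interior, giving \eqref{dyadic2}. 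The nesting \eqref{dyadic3} and the partition property \eqref{dyadic4} are then immediate from the tree structure and from $\{W_j^k\}_j$ being a partition.

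The bulk of the work lies in the ball inclusions \eqref{dyadic5} and \eqref{dyadic6}. For the outer inclusion, a telescoping argument along the ancestor chain, combined with each $W_i^l\subset B(z_i^l,c_0 s^l)$, shows that every piece $W_i^l\subset Q_j^k$ is contained in a ball around $z_j^k$ of radius bounded by a geometric sum; the hypothesis $12C_0 s\leq c_0$ is precisely calibrated so that this sum is bounded by $C_1 s^k=2C_0 s^k$. For the inner inclusion $B(z_j^k,c_1 s^k)\subset \tilde Q_j^k$, one shows that every point in the kernel is strictly closer to the correct descendant at every finer level than to any competitor, once again using the separation in \eqref{dyadic1} together with $12C_0 s\leq c_0$. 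Property \eqref{dyadic6} then drops out of \eqref{dyadic5} and the nesting.

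The main obstacle is the simultaneous balancing of all quantitative estimates under the single smallness assumption $12C_0 s\leq c_0$. This constant must absorb three independent sources of error: the geometric-series error from telescoping along the ancestor tree, the uncertainty of size $c_0 s^l$ in each Voronoi cell at level $l$, and the margin between the inner radius $c_1=c_0/3$ and the outer radius $C_1=2C_0$. I expect the delicate step to be the inner inclusion $b(Q_j^k)\subset \tilde Q_j^k$, since it requires ruling out any ``leakage'' of the kernel across Voronoi boundaries of other level-$k$ dyadic points, uniformly at every descendant scale.
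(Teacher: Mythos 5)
Note first that the paper itself contains no proof of this theorem: it is imported verbatim from Hyt\"onen--Kairema and cited to \cite{HyK}, so there is no internal argument to compare your construction against. Evaluated on its own, your outline has a genuine gap at the partition step. With $Q_j^k := \bigcup_{l\geq k}\bigcup_{i\in D_j^k(l)} W_i^l$, the sets $\{Q_j^k\}_j$ are \emph{not} pairwise disjoint at a fixed level $k$, so \eqref{dyadic4} fails; the nesting \eqref{dyadic3} is likewise not ``immediate.'' The reason is that Voronoi cells at successive scales do not nest along the ancestor tree. Concretely, take $x$ near the interface of $W_{j_1}^k$ and $W_{j_2}^k$, with your tie-break placing $x\in W_{j_1}^k$. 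The nearest level-$(k+1)$ point $z_i^{k+1}$ lies within $c_0 s^{k+1}$ of $x$ and is therefore essentially equidistant from $z_{j_1}^k$ and $z_{j_2}^k$; if the parent map assigns $\pi(k+1,i)=(k,j_2)$, then $x\in W_i^{k+1}\subset Q_{j_2}^k$ while also $x\in W_{j_1}^k\subset Q_{j_1}^k$. Replacing the union over $l$ by an intersection repairs disjointness but destroys the covering, since a point's Voronoi ancestor can change with $l$.

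This consistency-across-scales obstruction is precisely the technical heart of the Hyt\"onen--Kairema construction. Instead of aggregating Voronoi cells over scales, they take the closed cube to be the closure of the set of descendant dyadic points, $\overline{Q}_j^k := \overline{\{z_i^l : l\geq k,\ (l,i)\text{ descends from }(k,j)\}}$; these are automatically nested and cover $X$, but overlap along boundaries, and the real work is to exhibit a selection rule that is coherent across all scales and resolves the boundary overlaps into an exact partition. That selection is the contribution of \cite{HyK} over Christ's earlier measure-a.e.\ version, and your outline does not supply a substitute for it. For what it is worth, your telescoping estimate for the outer inclusion in \eqref{dyadic5} is fine as a computation --- the limiting radius $c_0 s^k/(1-s)$ is indeed at most $2C_0 s^k$ because the hypothesis forces $s\le 1/12$, hence $1/(1-s)\le 12/11$, together with $c_0\le C_0$ --- and you are right that the inner inclusion $b(Q_j^k)\subset\tilde Q_j^k$ is the delicate direction, but neither estimate can be brought home until the cubes are defined so that \eqref{dyadic3} and \eqref{dyadic4} actually hold.
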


\begin{rem} \
\begin{enumerate}
\item One can always find a family of points $\{z_j^k\}_{j\in\mathbb{N},k\in\mathbb{Z}}$ as the one in the hypothesis, so a doubling metric space always has a dyadic structure. 
\item The open and closed cubes are indeed open and closed sets, respectively. 
\end{enumerate}
\end{rem}

The metric space $(X,d)$ will be endowed with a (always nonzero) Borel measure $\mu$ and denoted by $(X, d, \mu)$.
 If $E$ is a measurable set and $u$ is a measurable function, we write $u_E=\frac{1}{\mu(E)}\int_Eud\mu$ for the average of $u$ over $E$. We say that $\mu$ is doubling if for any ball $B \subset X$ there exists a constant $C_d$ depending on the measure such that $\mu(2B) \leq C_d\mu(B)$. Equivalently, $\mu$ is a doubling (or $n_\mu$-doubling) measure if there exist constants $c_d,n_\mu>0$ such that
\begin{equation}\label{doubling}
\frac{\mu(\tilde{B})}{\mu(B)}\leq c_d\left(\frac{r(\tilde{B})}{r(B)}\right)^{n_\mu},\end{equation}
for any pair of balls $B\subset \tilde{B}$ in $X$. It turns out that any metric space endowed with a doubling measure is a doubling metric space. Observe that, if $0\leq n_\mu\leq \eta$, then \eqref{doubling} also holds for $\eta$ instead of $n_\mu$.

We will also need to consider another property for the doubling measure $\mu$. Take $\delta>0$. We say that a measure $\mu$ is a $\delta$-reverse doubling measure, if there exists $c_r>0$ such that
\begin{equation}\label{reverse_doubling}
c_\delta\left(\frac{r(\tilde{B})}{r(B)}\right)^\delta\leq \frac{\mu(\tilde{B})}{\mu(B)},
\end{equation}
for any pair of balls $B\subset \tilde{B}$ in $X$. 
Observe that, if $0\leq s\leq \delta$, then \eqref{reverse_doubling} also holds for $s$ instead of $\delta$. 

It should be noted that, for an $n_\mu$-doubling and $\delta$-reverse doubling measure $\mu$, the relation $\delta\leq n_\mu$ must be satisfied. When we say that $\mu$ is $\delta$-reverse doubling, we will always assume that $\delta$ is the biggest exponent for which there exists $c_\delta>0$ such that \eqref{reverse_doubling} holds. Analogously, $n_\mu$ will be assumed to be the smallest exponent for which there exists $c_d$ such that \eqref{doubling} holds.

The reverse doubling property is not too restrictive, as doubling measures are reverse doubling whenever the metric space satisfies some metric property called uniform perfectness (see, e.g., \cite{W,TRTOVS}). Also, it is known (see \cite[p.112]{JH}) that Ahlfors-David regular spaces are precisely (up to some transformations) those uniformly perfect metric spaces carrying a doubling measure.

 Recall that, for $\eta>0$, a measure $\mu$ is $\eta$-Ahlfors-David regular if there exist constants $c_l,c_u>0$ such that
 \begin{equation}\label{ahlfors}
 c_lr(B)^\eta\leq \mu(B)\leq c_u r(B)^\eta,
 \end{equation}
 for any ball $B$ with $x_B\in X,\ 0<r(B)<\diam X$.
 The measure $\mu$ will be called $\eta$-lower Ahlfors-David regular if it satisfies just the left-hand side inequality and will be called $\eta$-upper Ahlfors-David regular if it satisfies just the right-hand side one.
 
 For a subset $E\subset X$, we will say that $\mu$ is (resp. lower or upper) Ahlfors-David regular on $E$ if  the induced subspace $(E,d|_E,\mu|_E)$ is (resp. lower or upper) Ahlfors-David regular.
 
Along this paper, $\Omega$ will be a domain, i.e. an open connected set. The well-known Whitney decomposition in the Euclidean case can be extended to a doubling metric space $(X,d)$, see for instance \cite{CW,MS,HKST}.

\begin{letterlemma}[Whitney decomposition \cite{HKST}]\label{Whitney}Let $\Omega$ be domain of finite measure strictly contained in $X$. For fixed $M>5$, we can build a covering $W_M$ of $\Omega$ by a countable family of balls $\{B_i\}_{i\in \mathbb{N}}$ such that $B_i=B(x_i,r_i)$, with $x_i\in \Omega$ and $r_i=r(B_i)=\frac{1}{M}d(x_i)$, $i\in\mathbb{N}$.
\end{letterlemma}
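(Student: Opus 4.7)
The plan is to cover $\Omega$ by applying the classical $5r$-covering lemma to the natural family of balls centered at points of $\Omega$ with radius proportional to the distance to $\partial\Omega$, after a preliminary slicing of $\Omega$ by distance to the boundary so that the radii become uniformly bounded on each piece.

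First I would observe that $\mathcal{F}:=\{B(y,d(y)/(5M)):y\in\Omega\}$ is an open cover of $\Omega$ and that each ball in $\mathcal{F}$ is contained in $\Omega$, since $d(y)/(5M)<d(y)$. Because the radii in $\mathcal{F}$ are not uniformly bounded, I would decompose $\Omega$ into the countable family of shells $A_k:=\{y\in\Omega:2^k\le d(y)<2^{k+1}\}$, $k\in\mathbb{Z}$, and work with $\mathcal{F}_k:=\{B(y,d(y)/(5M)):y\in A_k\}$, whose radii all lie in $[2^k/(5M),2^{k+1}/(5M))$.

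To each $\mathcal{F}_k$ I would apply the elementary (bounded-radius) $5r$-covering lemma in $(X,d)$, producing a pairwise disjoint subfamily $\{B(y_i^k,d(y_i^k)/(5M))\}_i$ such that every ball of $\mathcal{F}_k$ is contained in some $5B(y_i^k,d(y_i^k)/(5M))=B(y_i^k,d(y_i^k)/M)$; in particular every $y\in A_k$ belongs to one such enlarged ball. Taking the union over $k\in\mathbb{Z}$ and relabeling the centers as $\{x_i\}_{i\in\mathbb{N}}$ with radii $r_i:=d(x_i)/M$ yields a family $W_M=\{B_i=B(x_i,r_i)\}_{i}$ that covers $\Omega$ and satisfies the required radius condition; the assumption $M>5>1$ also guarantees $r_i<d(x_i)$, so each $B_i\subset\Omega$.

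It remains to verify countability of $W_M$. Since $\mathbb{Z}$ is countable, it suffices to show that only countably (indeed only finitely) many indices $i$ occur at each level $k$. Fix $k$: the disjoint balls $B(y_i^k,d(y_i^k)/(5M))$ are contained in $\Omega$ and each has radius at least $2^k/(5M)$, so by the doubling condition \eqref{doubling} applied to a fixed reference ball of positive measure, their $\mu$-measures are bounded below by a common positive constant $c_k$ depending only on $k$, $M$, and $c_d$; disjointness together with $\mu(\Omega)<\infty$ then forces the collection to be finite. The main technical point I anticipate is the application of the $5r$-covering lemma, which in its elementary form requires uniformly bounded radii — this is precisely what the shell decomposition by $d(y)$ arranges.
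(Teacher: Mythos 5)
Your plan is the standard one (slice by distance to the boundary, apply the $5r$-covering lemma on each slice, take the $5$-dilations, reunite) and it does produce a cover by balls $B(x_i,d(x_i)/M)$ contained in $\Omega$; since the paper simply cites \cite{HKST} for this lemma rather than proving it, there is no internal proof to compare against, but this is the same Vitali-type construction used in that reference.

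The one step that does not hold as written is the countability argument. You assert that within a fixed shell $A_k$ the disjoint balls $B(y_i^k,d(y_i^k)/(5M))$ have measures ``bounded below by a common positive constant $c_k$ depending only on $k$, $M$, and $c_d$,'' and deduce finiteness. That lower bound is false in general: the doubling condition \eqref{doubling} compares $\mu(B(y,r))$ with $\mu$ of a reference ball only through a factor involving the distance from $y$ to the reference center, so for an \emph{unbounded} domain of finite measure the measures of balls of a fixed radius can tend to zero along a sequence escaping to infinity, and there may be infinitely many disjoint such balls in a single shell. (If one were willing to assume $\Omega$ bounded the argument would be fine, but the lemma as stated only assumes finite measure.) Fortunately you do not need finiteness per level, only countability of the whole family, and this follows without any quantitative lower bound: in a metric space carrying a nonzero doubling measure, \emph{every} ball of positive radius has strictly positive measure (compare it to a fixed ball of positive measure via \eqref{doubling}), so a pairwise disjoint family of balls contained in a set of finite measure is at most countable. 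Replacing the false uniform lower bound and the ``finite at each level'' claim by this observation closes the gap, and the rest of the proof goes through.
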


\begin{obs}\label{dilatadas y cadenas}
The Whitney decomposition can be taken such that, if one denotes $B^*$ the dilation of a ball $B$, then, when the chosen dilation is sufficiently small compared to $M$, we get the following properties for any ball $B\in W_M$ and their dilations:
\begin{enumerate}
\item $B^{*}\subset \Omega$;
\item $c_M^{-1}r(B^{*})\leq d(x)\leq c_Mr(B^{*})$, for all $x\in B^{*}$;
\item $\sum_{B\in W_M} \chi_{B^*}\lesssim \chi_\Omega$.
\end{enumerate}
We note that, for a fixed ball $B_0\in W_M$ and any ball $B$ in $W_M$, it is possible to build a finite chain $\mathcal{C}(B^*):=(B_0^*,B_1^*,\ldots,B_k^*)$,  with $B_k=B$ in such a way that $\lambda B_i\cap \lambda B_{j}\neq \emptyset$ for  some $\lambda>1$ sufficiently small (this $\lambda$ has something to do with the chosen dilation in the definition of $B^*$)  if and only if $|i-j|\leq1$.  With this definition of $B^*$, we have that, for two consecutive balls $B_j^*$ and $B_{j+1}^{*}$ in a chain like this, the following property holds
\begin{equation}\label{chainprop}
\max\{\mu(B_j^*),\mu(B_{j+1}^*)\}
\leq c_{M,\lambda} \mu\left( B_j^*\cap B_{j+1}^*\right).
\end{equation}

We denote $\ell[\mathcal{C}(B^*)]$ the length $k$ of this chain. Once the chains have been built, we can define, for each Whitney ball $E\in W_M$, the shadow of $E$ as the set $E(W_M)=\{B\in W_M: E^*\in \mathcal{C}(B^*)\}$. This construction is called a chain decomposition of $\Omega$ with respect to the fixed ball $B_0$.
\end{obs}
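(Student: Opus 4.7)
My plan is to verify the three dilation properties (1)--(3) first, then build the chain and prove \eqref{chainprop}. Throughout, I will write $B^* := \sigma B$ for some fixed dilation $\sigma$ with, say, $1<\sigma<M/2$ (this is the ``sufficiently small'' choice referred to in the statement), and $B_i = B(x_i,r_i)$ with $r_i = d(x_i)/M$ as given by Lemma \ref{Whitney}.

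For (1), any $y\in B_i^*$ satisfies $d(y,x_i)<\sigma r_i = \sigma d(x_i)/M<d(x_i)$, so by the triangle inequality $d(y)\geq d(x_i)-d(y,x_i)>(1-\sigma/M)d(x_i)>0$, whence $y\in\Omega$. Property (2) is the quantitative version of the same computation: the upper bound comes from $d(y)\leq d(x_i)+d(y,x_i)<(1+\sigma/M)d(x_i)=\frac{M+\sigma}{\sigma}r(B_i^*)$, and the lower bound from $(1-\sigma/M)d(x_i)=\frac{M-\sigma}{\sigma}r(B_i^*)\leq d(y)$; setting $c_M:=\max\{(M+\sigma)/\sigma,\ \sigma/(M-\sigma)\}$ gives the claim. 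For (3), fix $x\in\Omega$ and assume $x\in B_i^*\cap B_j^*$; by (2) the three quantities $d(x),\ r(B_i^*),\ r(B_j^*)$ are mutually comparable with constants depending only on $M$ and $\sigma$, so $r_i\asymp r_j\asymp d(x)/M$. Moreover $d(x_i,x_j)\leq \sigma(r_i+r_j)\lesssim d(x)$, so the centers $x_j$ of every Whitney ball containing $x$ in its dilation lie in a single ball $B(x,Cd(x))$ with comparable Whitney radii. The separation property of the centers (built into the $5r$-type covering used in Lemma \ref{Whitney}: balls of comparable radii cannot have arbitrarily close centers) combined with the doubling hypothesis \eqref{doubling} bounds the cardinality of such indices by a constant depending only on $M$, $\sigma$ and $c_d$.

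For the chain construction, fix a base ball $B_0\in W_M$ and an arbitrary $B\in W_M$. Since $\Omega$ is open and connected, I pick a continuous curve $\gamma\colon[0,1]\to\Omega$ joining $x_0$ to $x_B$; being compact, $\gamma$ has positive distance to $\partial\Omega$, so only finitely many Whitney balls intersect $\gamma$, and by (3) only boundedly many dilations $B^*$ meet each point of $\gamma$. Walking along $\gamma$ and indexing the dilated balls $B^*$ in the order in which they are encountered yields a finite sequence whose consecutive members meet (for the appropriate $\lambda$, which is fixed so that $\lambda B^*$ covers a fixed enlargement of $B^*$). After discarding repeated entries and short-cutting loops one obtains a chain $\mathcal{C}(B^*)=(B_0^*,B_1^*,\ldots,B_k^*=B^*)$ such that $\lambda B_i\cap \lambda B_{i+1}\neq\emptyset$, and the ``only if'' direction in the chain property is then imposed by choosing a minimal such chain.

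Finally, to prove \eqref{chainprop}, assume $\lambda B_j\cap \lambda B_{j+1}\neq\emptyset$. By (2) applied to any point in that intersection, the radii are comparable, $r(B_j^*)\asymp r(B_{j+1}^*)$, with constants depending on $M$, $\sigma$ and $\lambda$, and consequently $d(x_j,x_{j+1})\leq \lambda(r(B_j^*)+r(B_{j+1}^*))\lesssim r(B_j^*)$. An elementary geometric argument then produces a ball $\widetilde B\subset B_j^*\cap B_{j+1}^*$ with $r(\widetilde B)\asymp r(B_j^*)$; the doubling property \eqref{doubling} yields $\mu(B_j^*)+\mu(B_{j+1}^*)\lesssim \mu(\widetilde B)\leq \mu(B_j^*\cap B_{j+1}^*)$ with a constant depending only on $M$, $\sigma$, $\lambda$ and $c_d$. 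The main obstacle is making the curve-based chain argument rigorous so that consecutive dilated balls in the produced chain really overlap in the prescribed quantitative way; this boils down to calibrating $\sigma$ and $\lambda$ simultaneously in terms of $M$ so that the enlargements $\lambda B^*$ always ``bridge'' from one Whitney ball to a neighboring one along $\gamma$.
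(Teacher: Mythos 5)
Your verification is essentially correct, and it is worth noting that the paper offers no proof of this remark at all: properties (1)--(3) and the chain construction are asserted as standard facts about Whitney coverings in doubling metric spaces, with the reader referred to \cite{CW,MS,HKST} and to \cite{HV} for the chains. Your computations for (1) and (2) are exactly the expected ones, and your counting argument for (3) is fine provided you make explicit the center-separation property you invoke: Lemma \ref{Whitney} as quoted only gives a covering with $r_i=d(x_i)/M$, so the disjointness of the balls $\tfrac15 B_i$ (or similar) must be read off from the $5r$-covering construction in \cite{HKST}; since the remark begins ``the Whitney decomposition can be taken such that,'' this is legitimate, but it should be stated as part of the choice of decomposition rather than as a consequence of the lemma. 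Your proof of \eqref{chainprop} is also correct: a point $z\in\lambda B_j\cap\lambda B_{j+1}$ yields $B(z,(\sigma-\lambda)\min(r_j,r_{j+1}))\subset B_j^*\cap B_{j+1}^*$, and doubling does the rest --- but this requires $\lambda<\sigma$, i.e.\ the overlap must be tested on a dilation strictly smaller than the one defining $B^*$; you flag this calibration at the end, and it should be fixed once and for all at the outset.

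The one place where your route genuinely differs from what the generality of the statement demands is the chain construction. You join $x_0$ to $x_B$ by a curve, which presupposes that $\Omega$ is path-connected; a domain in a general metric space need only be connected. The cleaner argument, which avoids this entirely, is combinatorial: form the graph whose vertices are the Whitney balls, with an edge when $\lambda B_i\cap\lambda B_j\neq\emptyset$. The union of the balls in each connected component of this graph is open, and distinct components have disjoint unions, so connectedness of $\Omega$ forces a single component; any two balls are then joined by a finite edge-path, and passing to a path of minimal length gives the ``only if'' direction exactly as you argue. (For the John domains of Section \ref{ejemplo suficiencia} your curve exists by definition, and there the chains must additionally be built along John curves to get the length estimate \eqref{cadenas}, but that is beyond what this remark asserts.)
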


The type of domains we consider in this work are the so-called John domains, first appeared in  \cite{Jo}, and systematically studied since the work \cite{MarS}.
\begin{defi}  A domain $\Omega \subset X$ is a John domain if there is a distinguished point $x_0 \in \Omega$ called central point and a positive constant $c_J$ such that every point $x \in \Omega$ can be joined to $x_0$ by a rectifiable curve $\gamma:[0,l]\rightarrow \Omega$ parametrized by its arc length for which $\gamma(0)=x$, $\gamma(l)=x_0$ and
$$d(\gamma(t),\partial \Omega)\geq \frac{t}{c_J} \hspace{0,3cm} \mbox{ for } t\in [0,l]. $$
\end{defi}

The following chaining result for John domains in doubling metric spaces is a slightly modified version of the chaining result in \cite[Theorem 9.3]{HK}: 

\begin{lettertheorem}\label{chain condition}
Let $\Omega$  be a John domain in a doubling metric space $(X,d)$. Let $C_2\geq 1$ and $x_0$ the central point of $\Omega$. Then, there exist a ball $B_0$ centered at $x_0$, and a constant $c_2$ that depends on the John constant of $\Omega$, the doubling constant of the space and $C_2$ such that for every $x \in \Omega$ there is a chain of balls $B_i=B(x_i,r_i)\subset \Omega$, $i=0,1,....,$ with the following properties
\begin{enumerate}
\item There exists $R_i\subset B_i\cap B_{i+1}$ such that $B_i\cup B_{i+1} \subset c_2R_i$, $i=0,1,\ldots$;
 \item $d(x,y) \leq c_2 r_i$, for any $y\in B_i$, $i=0,1,\ldots$, with $r_i \rightarrow 0$ for $i \rightarrow \infty$;
  \item $d(B_i,\partial\Omega)\geq C_2 r_i$, $i=0,1,\ldots$;
 \item $\sum_j \chi_{B_j} \leq c_2 \chi_{\Omega}$.
\end{enumerate} 
\end{lettertheorem}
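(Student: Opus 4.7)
My plan is to construct the chain explicitly by following the John curve joining $x_0$ to $x$ and sampling points along it at geometrically spaced arc-length parameters, with radii tuned to make condition (3) hold with the prescribed $C_2$. Concretely, for $x\in\Omega$ let $\gamma:[0,l]\to\Omega$ be the John curve with $\gamma(0)=x$, $\gamma(l)=x_0$, parametrized by arc length and satisfying $d(\gamma(t),\partial\Omega)\geq t/c_J$. I would fix a ratio $\theta\in(0,1)$ close to $1$ (to be tuned in terms of $c_J$ and $C_2$), set $t_i:=l\theta^i$, $y_i:=\gamma(t_i)$, and pick radii $r_i:=t_i/(c_J(C_2+1))$, so that $B_i:=B(y_i,r_i)$ sits at distance at least $C_2 r_i$ from $\partial\Omega$ by the John estimate. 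The base ball $B_0$ is taken with universal radius $r_0:=d(x_0,\partial\Omega)/(c_J(C_2+1))$ (depending only on $x_0$), with boundedly many transitional balls inserted near $x_0$ so that the chain connects smoothly to the $x$-dependent sequence.

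Properties (2) and (3) are then immediate: for (3), $d(B_i,\partial\Omega)\geq d(y_i,\partial\Omega)-r_i\geq t_i/c_J-r_i=C_2 r_i$; for (2), the arc-length parametrization gives $d(x,y_i)\leq t_i=c_J(C_2+1)r_i$, whence $d(x,y)\leq (c_J(C_2+1)+1)r_i$ for every $y\in B_i$, while $r_i\to 0$ since $\theta<1$. For the overlap ball required by (1), I would observe that $d(y_i,y_{i+1})\leq t_i-t_{i+1}=(1-\theta)t_i$, which can be made much smaller than $\min\{r_i,r_{i+1}\}$ by choosing $\theta$ sufficiently close to $1$ depending on $c_J$ and $C_2$. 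Then, picking $z_i$ on $\gamma$ between $y_i$ and $y_{i+1}$ and defining $R_i:=B(z_i,\beta r_{i+1})$ for a small fixed $\beta>0$, one obtains $R_i\subset B_i\cap B_{i+1}$, while the triangle inequality together with the comparability $r_i=r_{i+1}/\theta$ gives $B_i\cup B_{i+1}\subset (2/(\beta\theta))R_i$, yielding (1).

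The main obstacle is property (4), the bounded overlap $\sum_j\chi_{B_j}\leq c_2\chi_\Omega$. By (3) every chain ball is of Whitney type, with $r_i\asymp d(y_i,\partial\Omega)$ up to constants depending only on $c_J$ and $C_2$; thus if $y\in B_i\cap B_j$ then $d(y,\partial\Omega)\asymp r_i\asymp r_j$, forcing the ratio $r_i/r_j=\theta^{i-j}$ to lie in a bounded interval and hence $|i-j|\leq N$ for some $N=N(\theta,c_J,C_2)$. Carrying this estimate out cleanly in the general metric setting is the delicate step, since we have no geodesics and no Euclidean packing to lean on; in practice it is handled by subordinating the chain to the Whitney decomposition of $\Omega$ from Lemma \ref{Whitney} and transferring the bounded-overlap property of the latter. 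Taking $c_2$ to be the maximum of the finitely many constants produced in each step then completes the proof.
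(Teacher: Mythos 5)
The paper itself does not prove Theorem~\ref{chain condition}: it quotes it as a (slightly modified) version of \cite[Theorem 9.3]{HK}, with a remark that the finite chain in \cite{HK} can be extended to an infinite one (referring to \cite[Lemma 4.9]{HV}). So there is no paper proof to compare line by line; what matters is whether your direct construction is sound.

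Your construction — sample the John curve at geometrically spaced arc-length parameters and tune the radii so the John estimate forces $d(\gamma(t_i),\partial\Omega)\geq (C_2+1)r_i$ — is exactly the right spirit, and your verifications of (1), (2), (3) for the inner part of the chain are fine. The genuine gap is property (4), and it is not a small one. You claim that (3) makes each $B_i$ ``of Whitney type, with $r_i\asymp d(y_i,\partial\Omega)$.'' But the John condition gives only the \emph{lower} bound $d(y_i,\partial\Omega)\geq t_i/c_J\asymp r_i$; there is no upper bound $d(y_i,\partial\Omega)\lesssim r_i$. In the tail of the chain, $t_i\to 0$ while $d(y_i,\partial\Omega)\to d(x,\partial\Omega)>0$, so $r_i$ and $d(y_i,\partial\Omega)$ are \emph{not} comparable — the tail balls are tiny balls sitting deep inside $\Omega$, not Whitney balls. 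The inference ``$y\in B_i\cap B_j\Rightarrow r_i\asymp d(y,\partial\Omega)\asymp r_j\Rightarrow |i-j|\lesssim 1$'' therefore collapses. Moreover, in a general metric space there is no lower bound on $d(\gamma(t_i),\gamma(t_j))$ in terms of $|t_i-t_j|$; a rectifiable John curve parametrized by arc length is free to fold or spiral back, so many distant-in-parameter points $\gamma(t_i)$ can be crowded near $x$, and then a point near $x$ can lie in unboundedly many of the $B_i$. Nothing in the arc-length or John hypotheses rules this out, which is precisely why \cite{HK} subordinates the chain to a fixed Whitney decomposition — the bounded overlap is inherited from the Whitney family together with an argument that the curve can meet each Whitney ball only boundedly often, using that $t\lesssim c_J\,d(\gamma(t),\partial\Omega)\lesssim r(Q)$ whenever $\gamma(t)\in Q$. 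You mention this fallback in one sentence, but it is a genuinely different construction, and carrying it out (and then appending the shrinking tail in a way that respects (4)) is where the whole content of the theorem lies.

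A secondary, but still real, problem is the base ball. You want a single $B_0$ centered at $x_0$ with a universal radius, yet the first ball of your $x$-dependent sequence has radius $l/(c_J(C_2+1))$ with $l=\operatorname{length}(\gamma)$. When $x$ is close to $x_0$, $l$ is small and the number of ``transitional'' balls needed to descend geometrically from the universal radius $d(x_0,\partial\Omega)/(c_J(C_2+1))$ to $l/(c_J(C_2+1))$ is of order $\log\bigl(d(x_0,\partial\Omega)/l\bigr)$, which is unbounded over $x\in\Omega$; it is not ``boundedly many.'' This is repairable (march the transitional centers along the curve rather than keeping them at $x_0$), but it again requires a careful overlap estimate of exactly the kind you have not supplied. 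In short: the skeleton is right, but the bounded-overlap step — the only nontrivial part — is asserted rather than proved, and the one argument you do give for it rests on a false comparability.
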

\begin{obs} The sequence of balls obtained in \cite[Theorem 9.3]{HK} is finite, but it can be easily completed to a (possibly) infinite one with the properties mentioned above (see also \cite[Lemma 4.9]{HV} for the case of $\mathbb{R}^n$).  \end{obs}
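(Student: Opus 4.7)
The plan is to ride along a John curve from the central point $x_0$ down to $x$, placing balls at geometrically decreasing scales whose radii are a fixed fraction of the distance to $\partial\Omega$. Given $x\in\Omega$, I would use the John condition to obtain an arc-length parametrized rectifiable curve $\gamma:[0,l]\to\Omega$ with $\gamma(0)=x$, $\gamma(l)=x_0$, and $d(\gamma(t),\partial\Omega)\ge t/c_J$. Fixing a parameter $\theta\in(0,1)$ close to $1$ (its size chosen at the end depending only on $c_J$ and $C_2$), I would set
\[
t_i:=l\,\theta^i,\qquad x_i:=\gamma(t_i),\qquad r_i:=\frac{t_i}{c_J(C_2+2)}
\]
for $i\ge 0$. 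This places $B_0=B(x_0,r_0)$ at the central point and makes $r_i\searrow 0$, as required.

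Properties (3) and (2) are then immediate from the John condition: for $y\in B_i$, the triangle inequality gives $d(y,\partial\Omega)\ge t_i/c_J - r_i = (C_2+1)r_i\ge C_2 r_i$, while $d(x,y)\le t_i+r_i\le (c_J(C_2+2)+1)r_i$. For (1), arc-length parametrization yields $d(x_i,x_{i+1})\le(1-\theta)t_i$; taking $R_i$ to be the midpoint ball $B(\gamma((t_i+t_{i+1})/2),r_{i+1}/2)$ and choosing $\theta$ close enough to $1$ so that $(1-\theta)c_J(C_2+2)\le\theta$ forces $R_i\subset B_i\cap B_{i+1}$, while a matching upper bound on $d(z_{R_i},y)$ for $y\in B_i\cup B_{i+1}$ gives $B_i\cup B_{i+1}\subset c_2 R_i$ for a suitable $c_2$.

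Property (4) is the most delicate piece. If $z\in B_i$, then by (2) and (3) both $d(z,\partial\Omega)$ and $d(z,x)$ are comparable to $r_i=r_0\theta^i$, so $z$ can lie in only a bounded number of the $B_i$'s; the doubling property of $X$ is what turns this geometric size information into a hard cardinality bound that is uniform in $z$. I expect the main obstacle to be tuning $\theta$ so that the inclusion in (1) is compatible with a universal overlap constant in (4) depending only on $c_J$, on $C_2$, and on the doubling data; the extension from the finite chain of \cite[Theorem 9.3]{HK} to a (possibly) infinite one, as noted in the subsequent remark, then falls out automatically from $r_i\to 0$.
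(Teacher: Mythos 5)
Your proposal builds the entire chain from scratch by walking the John curve $\gamma$ from $x_0$ to $x$ and placing balls at the geometrically spaced parameters $t_i=l\theta^i$. This is a genuinely different route from what the remark has in mind: the remark merely asserts that the finite chain already produced by \cite[Theorem 9.3]{HK} can be \emph{appended} with a shrinking tail near $x$, whereas you are re-deriving the whole of Theorem~\ref{chain condition}. Properties (1), (2), (3) do come out correctly from your choice of $r_i$, $\theta$ and the midpoint balls $R_i$ (apart from the minor issue that your $B_0$ has radius $l/(c_J(C_2+2))$ and therefore depends on $x$ through the curve length $l$, whereas the theorem asks for a single central ball independent of $x$).

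Your argument for property (4) has a genuine gap. For $z\in B_i$ your construction yields only the one-sided estimates $d(z,\partial\Omega)\gtrsim r_i$ and $d(z,x)\lesssim r_i$; the reverse comparisons, which your sketch relies on, are false in general. For $i$ large one has $B_i\subset B(x,c_2 r_i)\subset B(x,c\,d(x))$, so every $z\in B_i$ satisfies $d(z,\partial\Omega)\asymp d(x,\partial\Omega)$, a fixed quantity not comparable to $r_i$. Moreover the John condition gives no lower bound on $d(\gamma(t_i),x)$ in terms of $t_i$: it can be much smaller than $r_i=t_i/(c_J(C_2+2))$ if the curve backtracks, so the point $x$ itself may lie in $B_i$ for all large $i$, and then $\sum_i\chi_{B_i}(x)=\infty$. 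The doubling property alone cannot repair this. The intended argument is shorter and sidesteps the issue: keep the finite chain of \cite[Theorem 9.3]{HK}, whose bounded overlap is already proved there, down to a ball comparable to the Whitney ball of $x$, and append a tail of balls $B(y_k,\rho_k)$ with $\rho_k\to 0$ and $d(y_k,x)\asymp\rho_k$. For such a tail, any $z\in B(y_k,\rho_k)$ satisfies $d(z,x)\asymp\rho_k$, which forces bounded overlap, while $B(x,\rho_k)\subset CB(y_k,\rho_k)$ still yields the Lebesgue-point convergence $u_{B_i}\to u(x)$ used in Lemma~\ref{estimacion puntual}.
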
 
 
 \begin{obs}
 Theorem \ref{chain condition} holds for a larger class of domains called weak John domains, i.e., those domains $\Omega$ for which there are a central point $x_0$ and a constant $c_J\geq 1$ such that for every $x \in \Omega$ there exists a curve $\gamma :[0,1]\rightarrow \Omega$ with $\gamma(0)=x$, $\gamma(1)=x_0$ and
 $$d(\gamma(t),\Omega^c)\geq  \frac{d(x,\gamma(t))}{c_J},\qquad t\in [0,1].$$
 In fact, \cite[Theorem 9.3]{HK} (and our modified version) is proved for bounded weak John domains. However, in this paper we will restrict ourselves to usual John domains.

 \end{obs}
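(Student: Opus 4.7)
The plan is to extend the chain condition of Theorem \ref{chain condition} from standard John domains to weak John domains by re-examining the proof of \cite[Theorem 9.3]{HK} and checking that the arc-length parametrization of the curve $\gamma$ is never essentially used; only the carrot-type control $d(\gamma(t),\Omega^c)\geq d(x,\gamma(t))/c_J$ enters the geometric estimates. To that end, I would fix $x\in\Omega$ and let $\gamma:[0,1]\to\Omega$ be a weak John curve with $\gamma(0)=x$ and $\gamma(1)=x_0$.

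First I would extract a sequence of parameters $1=t_0>t_1>t_2>\cdots\to 0$ and set $y_i=\gamma(t_i)$, choosing the $t_i$ recursively so that the distances $d(y_i,\partial\Omega)$ decay geometrically, say $d(y_{i+1},\partial\Omega)\leq \tfrac{1}{2}d(y_i,\partial\Omega)$. The existence of such $t_i$ uses only the continuity of $\gamma$ together with the fact that $y_i\to x\in\partial\Omega^c$-neighborhood limit, so $d(y_i,\partial\Omega)\to 0$. I would then define $B_i=B(y_i,r_i)$ with $r_i$ a suitably small multiple of $d(y_i,\partial\Omega)$ (the exact proportion being dictated by the prescribed constant $C_2$ and by the overlap requirement), and define $R_i$ to be a ball centered at some intermediate point $\gamma(s_i)$ with $s_i\in(t_{i+1},t_i)$, with scale comparable to $\min\{d(y_i,\partial\Omega),d(y_{i+1},\partial\Omega)\}$, ensuring $R_i\subset B_i\cap B_{i+1}$ and $B_i\cup B_{i+1}\subset c_2 R_i$.

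Next I would verify the four properties. Property (3) is immediate from the choice of $r_i$. For Property (2), the weak John inequality yields $d(x,y_i)\leq c_J\, d(y_i,\partial\Omega)\asymp r_i$, and the triangle inequality then gives $d(x,y)\lesssim r_i$ for any $y\in B_i$; the convergence $r_i\to 0$ follows from the geometric decay of $d(y_i,\partial\Omega)$. Property (1) is ensured by the construction of $R_i$ through continuity of $\gamma$. Property (4), the bounded overlap of the balls along the chain, follows from the doubling property of $(X,d)$ together with the geometric decay of the radii: at a fixed point, only finitely many scales contribute a ball containing it, and at each scale the doubling property bounds the number of such balls.

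The main obstacle is the non-rectifiability of $\gamma$ in the weak John setting: in the classical John case one uses arc length as a natural scale parameter to index the balls, but here one has to replace that scale by the intrinsic quantity $d(y_i,\partial\Omega)$ (equivalently, by $d(x,y_i)$ via the carrot inequality), and extract the indexing sequence $t_i$ purely from continuity of $\gamma$ and the weak John bound. The completion of a finite chain to a possibly infinite one, discussed in the preceding remark, relies only on the doubling structure of the space and so transfers to the weak John case without modification.
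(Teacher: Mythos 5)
The paper does not actually prove this remark: it is justified purely by citation, since \cite[Theorem 9.3]{HK} is itself stated and proved for bounded \emph{weak} John domains, so the remark merely records that the chaining result already covers the larger class. Your high-level strategy --- rerun the chain construction using the carrot bound $d(x,\gamma(t))\le c_J\,d(\gamma(t),\Omega^c)$ as the scale parameter in place of arc length --- is exactly the right idea and is what the cited proof does.

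However, the concrete extraction step you propose is internally inconsistent and would fail as written. You choose $t_i\to 0$ with $y_i=\gamma(t_i)$ so that $d(y_{i+1},\partial\Omega)\le \tfrac12 d(y_i,\partial\Omega)$, justified by the assertion that $d(y_i,\partial\Omega)\to 0$. But $y_i=\gamma(t_i)\to\gamma(0)=x$ by continuity, and $x$ is an interior point of the open set $\Omega$, so in fact $d(y_i,\partial\Omega)\to d(x,\partial\Omega)>0$; no sequence of points on $\gamma$ converging to $x$ can have geometrically decaying distance to the boundary. The quantity that must decay geometrically is $d(x,y_i)$ (equivalently the radius $r_i$), not $d(y_i,\partial\Omega)$: one picks $y_i$ on $\gamma$ with $d(x,y_i)\asymp 2^{-i}\diam\Omega$ and sets $r_i$ a small multiple of $d(x,y_i)$, so that property (3) follows from the carrot bound $d(y_i,\partial\Omega)\ge d(x,y_i)/c_J\gtrsim C_2 r_i$ and property (2) is immediate. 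This process terminates once $d(x,y_i)\lesssim d(x,\partial\Omega)$, i.e.\ once the curve enters a Whitney-type ball around $x$, which is precisely why the chain of \cite[Theorem 9.3]{HK} is finite and why the tail with $r_i\to 0$ has to be appended separately near $x$ (the content of the remark preceding this one); note that this tail cannot consist of nested balls all containing $x$, or property (4) would fail at $x$. With that correction, and keeping your (essentially correct) argument that bounded overlap follows because any chain ball containing a fixed point $y$ must have radius comparable to $d(x,y)$, your sketch matches the intended argument.
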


We will also need the following result about the boundedness of certain type of operators from $L^p(X,\mu)$ to $L^q(X,\mu)$. This result can be found in a more general version in \cite[Theorem 3]{SW}.

\begin{lettertheorem}\label{sawyer y wheeden} Let $(X,d, \mu)$ be a metric space endowed with a doubling measure $\mu$. Set $1<p<q<\infty$.  Let $T$ be an operator given by
$$Tf(x)=\int_X K(x,y) f(y)\,d\mu(y),\qquad x \in X, $$
where $K(x,y)$ is a nonnegative kernel.

Let us define
\begin{equation}\label{definicion de phi} \varphi(B)=\sup\lbrace K(x,y): x,y \in B, d(x,y) \geq  C(K) r(B)\rbrace,
\end{equation}
where $B$ is a ball of radius $r(B)$ and $C(K)$ is a sufficiently small positive constant that depends only on the metric $d$ and the kernel $K$.

Suppose that 
\begin{equation}\label{condicion del peso SW}
\sup_{B\subset X}\varphi(B) \mu(B)^{\frac{1}{q}+\frac{1}{p'}}<\infty,
\end{equation}
where the supremum is taken over all the balls $B \subset X$. 

Under these hypotheses,
\begin{equation} \label{acotacion sawyer y wheeden}\left( \int_X |Tf(x)|^q \,d\mu(x)\right)^{1/q} \lesssim \left(\int_ X f(x)^p \,d\mu(x) \right)^{1/p} . 
\end{equation}
\end{lettertheorem}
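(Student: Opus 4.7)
The plan is to reduce $T$ to a dyadic model operator using the Hyt\"onen--Kairema cubes of Theorem~\ref{dyadiccubes}, and then bound that model by a Muckenhoupt--Wheeden style stopping-time argument, which is the mechanism behind the proof in \cite{SW}. First, choose the dyadic parameter $s$ in Theorem~\ref{dyadiccubes} sufficiently small compared with the constant $C(K)$ in \eqref{definicion de phi}. Then, for every $(x,y)\in X\times X$, the nesting property \eqref{dyadic3} produces a smallest dyadic cube $Q(x,y)$ containing both points, and the containments \eqref{dyadic5} force $r(B(Q(x,y)))\asymp d(x,y)$ together with $d(x,y)\geq C(K)\,r(B(Q(x,y)))$. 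By the very definition of $\varphi$, this gives $K(x,y)\leq \varphi(B(Q(x,y)))$, and reorganizing the integral in terms of $Q(x,y)$ yields the pointwise domination
$$
Tf(x)\lesssim \widetilde{T}f(x):=\sum_{Q\in\mathcal{D}}\varphi(B(Q))\left(\int_{Q}f\,d\mu\right)\chi_{Q}(x),
$$
where $\mathcal{D}$ denotes the family of all Hyt\"onen--Kairema cubes.

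Next, I would establish the $L^{p}(X,\mu)\to L^{q}(X,\mu)$ boundedness of $\widetilde{T}$ by duality. For nonnegative $g\in L^{q'}(X,\mu)$ with $\|g\|_{q'}=1$, Fubini and the hypothesis \eqref{condicion del peso SW} yield
$$
\int_{X}\widetilde{T}f\cdot g\,d\mu=\sum_{Q\in\mathcal{D}}\varphi(B(Q))\left(\int_{Q}f\,d\mu\right)\left(\int_{Q}g\,d\mu\right)\lesssim \sum_{Q\in\mathcal{D}}\mu(Q)^{\frac{1}{p}+\frac{1}{q'}}\langle f\rangle_{Q}\langle g\rangle_{Q},
$$
where $\langle\cdot\rangle_{Q}$ denotes the $\mu$-average over $Q$ and I have used $2-\frac{1}{q}-\frac{1}{p'}=\frac{1}{p}+\frac{1}{q'}$. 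A simultaneous Calder\'on--Zygmund stopping-time construction on $f$ and $g$ extracts a sparse subfamily $\mathcal{S}\subset\mathcal{D}$ on which the relevant averages grow geometrically between successive generations, so that the cubes not in $\mathcal{S}$ contribute only a geometric tail that is absorbed into the principal term. Splitting $\mu(Q)^{\frac{1}{p}+\frac{1}{q'}}=\mu(Q)^{1/p}\cdot\mu(Q)^{1/q'}$ and applying H\"older's inequality cube by cube over $\mathcal{S}$, the remaining sum is dominated by $\|Mf\|_{L^{p}}\|Mg\|_{L^{q'}}\lesssim \|f\|_{L^{p}}\|g\|_{L^{q'}}$, where $M$ is the Hardy--Littlewood maximal operator, bounded on every $L^{r}(X,\mu)$ with $1<r\leq \infty$ by the doubling property.

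The main obstacle is the sparse/Carleson-packing step, i.e.\ showing that the stopping-time family $\mathcal{S}$ satisfies $\sum_{Q'\in\mathcal{S},\, Q'\subset Q}\mu(Q')\lesssim \mu(Q)$. In the classical Euclidean dyadic setting this is immediate from the disjointness of the maximal stopping subcubes at each generation; in our setting one exploits the nesting properties \eqref{dyadic3} and \eqref{dyadic6} of the Hyt\"onen--Kairema cubes together with the doubling of $\mu$ to obtain the same kind of disjointness and to sum the geometric errors. Once this packing bound is available, the duality computation above closes and produces the desired inequality \eqref{acotacion sawyer y wheeden}.
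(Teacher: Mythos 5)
The paper does not prove Theorem~\ref{sawyer y wheeden} itself: it invokes \cite[Theorem 3]{SW}, and the original Sawyer--Wheeden argument is organized around a direct covering and ``principal ball'' decomposition rather than a dyadic model operator. Your sparse-domination route is a legitimate and quite different modern alternative, but as written it has a genuine gap in the very first step.

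Your claim that the smallest Hyt\"onen--Kairema cube $Q(x,y)$ containing both $x$ and $y$ satisfies $r(B(Q(x,y)))\asymp d(x,y)$, together with $d(x,y)\geq C(K)\,r(B(Q(x,y)))$, is not correct. From \eqref{dyadic5} one only obtains the one-sided bound $d(x,y)\lesssim r(B(Q(x,y)))$; there is no lower bound in general. Two points can be arbitrarily close to one another yet separated by a dyadic boundary at every scale down to the one that finally unites them, so their least common ancestor can be arbitrarily large relative to $d(x,y)$ (the familiar $-\varepsilon$ vs.\ $+\varepsilon$ phenomenon in the standard lattice on $\mathbb{R}$). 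Shrinking the parameter $s$ does not repair this, since $b(Q)$ and $B(Q)$ scale together. Without the lower bound you cannot invoke the definition of $\varphi$ to deduce $K(x,y)\le\varphi(B(Q(x,y)))$, so the pointwise domination $Tf\lesssim\widetilde Tf$ is unjustified. The standard remedy is to pass to a finite family of \emph{adjacent} dyadic systems (which Hyt\"onen--Kairema also construct, though Theorem~\ref{dyadiccubes} as quoted in the paper gives only a single grid), so that every pair $(x,y)$ is captured by a cube of comparable size in at least one of the systems, and then to sum the finitely many resulting estimates. Once that is in place, the duality and sparse computation does close; but note that the final H\"older step needs one additional line: after applying H\"older with exponents $(p,p')$ the second factor carries $\mu(Q)^{p'/q'}$ with $p'/q'>1$, and one must first apply the embedding $\ell^1\hookrightarrow\ell^{p'/q'}$ (equivalently, reduce to $\sum_Q\mu(Q)^{q/p}\langle f\rangle_Q^q$ and use $\ell^1\hookrightarrow\ell^{q/p}$) before the maximal-function bound gives $\|f\|_{L^p}\|g\|_{L^{q'}}$.
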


This result can be used to bound the fractional integral operator in our context under mild conditions on the measure. Recall that

\begin{equation}\label{fractional integral}
I^\mu_{\alpha}f(x):= \int_{X } \frac{f(y)d(x,y)^{\alpha}}{\mu[B(x,d(x,y))]}d\mu(y),\qquad 0< \alpha.
\end{equation}
By $I_0^\mu$ we understand the Hardy-Littlewood maximal function associated to the measure $\mu$, namely 

\[I_0^\mu f(x)= M_\mu f(x):=\sup_{B\ni x} \frac{1}{\mu(B)}\int_B |f(x)|d\mu(x).\]

When $\alpha=0$ (i.e., for the maximal function), the only possibility is $p=q$ and, in this case, it is enough to ask the measure $\mu$ to be doubling. Let then $\alpha>0$ and $p>1$. For some $q> p$ to be chosen later, we shall bound this operator from $L_\mu^{q'}$ to $L_\mu^{p'}$ using Theorem \ref{sawyer y wheeden}. 

To check that \eqref{condicion del peso SW} holds, fix a ball $B$. Then, by the doubling condition, for any $x,y \in B$ with $d(x,y)\geq C_{\mu,\alpha}r(B)$ we  have that
$$\frac{d(x,y)^{\alpha}}{\mu[B(x,d(x,y))]}\mu(B)^{1/p'+1/q}\asymp \frac{r(B)^{\alpha}}{\mu(B)} \mu(B)^{1/p'+1/q},$$
and then a sufficient condition for the boundedness of our operator is
$$\sup_{B\subset X} r(B)^{\alpha}\mu(B)^{1/q-1/p}<\infty.$$
This tells us that if our measure $\mu$ is $\alpha\frac{pq}{q-p}$-lower Ahlfors-David regular,  then the claimed boundedness holds.

Observe that, as can be deduced from \cite[Theorem 3]{SW}, if the measure is $\alpha$-reverse doubling, then the $\alpha\frac{pq}{q-p}$-lower Ahlfors-David regularity is a necessary and sufficient condition for the boundedness of the operator.

If we let $\eta:= \alpha\frac{pq}{q- p}$, then we may write $q$ in the form $q=\frac{\eta p}{\eta-\alpha p}$. It is immediate that measures that are Ahlfors-David regular on the whole space are automatically doubling and reverse doubling.

\section{Fractional Poincar\'e-Sobolev inequalities on John domains}\label{demostraciones}
This section is devoted to the study of improved fractional $(q,p)$-Poincar\'e inequalities on bounded John domains,  where  $1\leq p\leq q<\infty$. Particular cases of these inequalities include  
some already known results in the Euclidean case, such as the unweighted inequalities considered in  \cite{HV} and the inequalities where the weights are  powers of the distance to the boundary considered in  \cite{DD2, LH}. We will come back to these special examples in Section \ref{ejemplos}.

The proof makes use of some of the arguments in \cite{HV} and \cite{DD2}. The fundamental idea is the classical one to obtain ordinary Poincar\'e inequalities: to bound the oscillation of the function $u$ by the fractional integral of its derivative by using regularity properties of the function and the domain and then use the boundedness properties of the fractional integral operator. Thus, if we understand the function
\begin{equation}\label{fractional derivative}
g_p(y):=\left[\int_{\{z\in\Omega:d(z,y)\leq\tau d(y)\}}\frac{|u(y)-u(z)|^p}{\mu[B(z,d(y,z))]d(y,z)^{s p}}d\mu(z)\right]^{1/p}\chi_\Omega(y),\qquad y\in \Omega
\end{equation}
as a fractional version of the derivative of $u$ on $\Omega$,  we just have to bound $|u(x)-a|$ for some $a\in \mathbb{R}$ by its fractional integral, $I_s^\mu g_p(x)$, for $\mu$-a.e. $x\in \Omega$ (see \eqref{fractional integral} for the definition of the fractional integral). This is done in the following lemma.

\begin{lem}\label{estimacion puntual} Consider a bounded John domain $\Omega$ in the doubling metric space $(X,d,\mu)$ with a chain as the one in Theorem \ref{chain condition}. Suppose $\mu$ to be $\delta$-reverse doubling. Let $s,\tau\in (0,1)$, $0\leq s\leq \delta$  and $1\leq p<\infty$.  There exists $c_3>0$ such that, for any  $u \in W^{s,p}(\Omega,\mu)$. 
\[
|u(x)-u_{B_0}|\lesssim I_s^\mu\left(g_p\chi_{\Omega\cap\{d(\cdot,x)\leq c_3d(\cdot)\}}\right),\qquad \mu-\text{a.e. }x\in\Omega,
\]
where $g_p$ is as in \eqref{fractional derivative}. 
\end{lem}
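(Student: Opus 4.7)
\noindent The plan is to adapt the classical chain/telescope argument to the present metric setting. First, I invoke Theorem~\ref{chain condition}, choosing its free constant $C_2$ so that $C_2\ge 2/\tau$; this produces a chain $\{B_i=B(x_i,r_i)\}_{i\ge 0}$ whose radii tend to zero, whose consecutive members overlap in a ball $R_i$ satisfying $B_i\cup B_{i+1}\subset c_2 R_i$, whose multiplicities are bounded, and which satisfies $d(y)\ge C_2 r_i$ for every $y\in B_i$. Property (2) of the chain yields $B_i\subset B(x,(c_2+1)r_i)$ and $B(x,r_i)\subset (c_2+2)B_i$, so doubling gives $\mu(B_i)\asymp\mu(B(x,r_i))$; combined with $r_i\to 0$ and the Lebesgue differentiation theorem, $u_{B_i}\to u(x)$ at $\mu$-a.e. $x\in\Omega$. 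Telescoping and absorbing $\mu(B_i),\mu(B_{i+1})\lesssim\mu(R_i)$ by doubling then produces
\[
|u(x)-u_{B_0}|\;\leq\;\sum_{i\ge 0}|u_{B_{i+1}}-u_{B_i}|\;\lesssim\;\sum_{i\ge 0}\frac{1}{\mu(B_i)}\int_{B_i}|u-u_{B_i}|\,d\mu.
\]

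\noindent Next, I bound each average by an $L^1$-type expression in $g_p$. Using Jensen's inequality and the factorization $|u(y)-u(z)|=|u(y)-u(z)|K(y,z)^{-1/p}\cdot K(y,z)^{1/p}$ with $K(y,z):=\mu[B(z,d(y,z))]\,d(y,z)^{sp}$, I apply H\"older $(p,p')$ to the $z$-integral. The bound $\mu[B(z,d(y,z))]\lesssim\mu(B_i)$ (from doubling, since $B(z,d(y,z))\subset 3B_i$) together with $d(y,z)\le 2r_i$ controls the $K^{p'/p}$-integral by $\mu(B_i)r_i^s$ after raising to $1/p'$. The choice $C_2\ge 2/\tau$ combined with property (3) ensures $d(y,z)\le 2r_i\le\tau d(y)$ for every $y,z\in B_i$, so the first H\"older factor is at most $g_p(y)$. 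The outcome is the local estimate
\[
\frac{1}{\mu(B_i)}\int_{B_i}|u-u_{B_i}|\,d\mu\;\lesssim\;\frac{r_i^s}{\mu(B_i)}\int_{B_i}g_p(y)\,d\mu(y).
\]

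\noindent Finally, I recognize the resulting sum as a fractional integral by exchanging sum and integral, using the bounded overlap of the chain, and proving the pointwise comparison $r_i^s/\mu(B_i)\lesssim d(x,y)^s/\mu[B(x,d(x,y))]$ for $y\in B_i$. Since $\mu(B_i)\asymp\mu(B(x,r_i))$, I split into two cases. If $r_i\ge d(x,y)$, the $\delta$-reverse doubling inequality on $B(x,d(x,y))\subset B(x,r_i)$ gives $\mu(B(x,r_i))\gtrsim (r_i/d(x,y))^\delta\mu[B(x,d(x,y))]$, and the hypothesis $s\le\delta$ together with $r_i\ge d(x,y)$ yields $r_i^{s-\delta}d(x,y)^\delta\le d(x,y)^s$, producing the comparison. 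If $r_i<d(x,y)\le c_2 r_i$, then $r_i\asymp d(x,y)$ and ordinary doubling handles it. The support restriction follows from property (3) combined with $d(x,y)\le c_2 r_i$, which forces $\bigcup_i B_i\subset\{y\in\Omega:d(x,y)\le c_3 d(y)\}$ with $c_3:=c_2/C_2$. The main technical point is precisely this case analysis, where the reverse-doubling assumption and the condition $s\le\delta$ are essential; the rest of the argument is routine telescoping and chain arithmetic.
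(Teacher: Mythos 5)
Your proof is correct and follows essentially the same route as the paper's: telescoping along the chain from Theorem~\ref{chain condition} with $C_2$ chosen comparable to $2/\tau$, Jensen plus H\"older to reduce each term to $r_i^s\mu(B_i)^{-1}\int_{B_i}g_p$, and then the pointwise comparison $r_i^s/\mu(B_i)\lesssim d(x,y)^s/\mu[B(x,d(x,y))]$ via doubling and $\delta$-reverse doubling. The only differences are cosmetic (you apply H\"older directly to the kernel $K(y,z)=\mu[B(z,d(y,z))]d(y,z)^{sp}$ rather than first to $\mu(B_i)d(y,z)^{sp}$ and then enlarging the denominator) and that you spell out the case analysis in the kernel comparison and the Lebesgue-differentiation step, which the paper leaves implicit.
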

\begin{proof}
We can use the chain of balls in Theorem \ref{chain condition} and the Lebesgue differentiation theorem in order to obtain, for $\mu$-almost every $x\in \Omega$, 
$$u(x)=\lim_{i \rightarrow \infty} \frac{1}{\mu(B_i)}\int_{B_i} u(y)d\mu(y)= \lim_{i \rightarrow \infty} u_{B_i}.$$
Fix one of these points $x\in \Omega$. Then, as consecutive balls in the chain intersect in a ball whose dilation contain the union of them, we have
\[
\begin{split}
|u(x)-u_{B_0}| &\leq
\sum_{i=0}^\infty|u_{B_{i+1}}-u_{B_i}|\\
&\leq \sum_{i=0}^\infty|u_{B_{i+1}}-u_{B_i\cap B_{i+1}}|+|u_{B_{i}}-u_{B_i\cap B_{i+1}}|\\
&\lesssim\sum_{i=0}^\infty \frac{1}{\mu(B_i)}\int_{B_i}|u(y)-u_{B_i}|d\mu(y).
\end{split}
\]

Now, observe that, for $z,y \in B_i$ we have that $B(y,d(z,y))\subset 3 B_i$. Then, we have that each term in the sum above can be bounded as follows
\[
\begin{split}
\frac{1}{\mu(B_i)}\int_{B_i}|u(y)-u_{B_{i}}|d\mu(y)&\leq\frac{1}{\mu(B_i)}\int_{B_i}\left|\frac{1}{\mu(B_i)}\int_{B_i}(u(y)-u(z))d\mu(z)\right|d\mu(y)\\
&\lesssim \frac{r_i^s}{\mu(B_i)}\int_{B_i}\left(\int_{B_i}\frac{|u(y)-u(z)|^pd\mu(z)}{\mu(B_i)d(y,z)^{  s p}}\right)^{1/p}d\mu(y)\\
&\lesssim \frac{r_i^s}{\mu(B_i)}\int_{B_i}\left(\int_{B_i}\frac{|u(y)-u(z)|^pd\mu(z)}{\mu[B(y,d(y,z))]d(y,z)^{s p}}\right)^{1/p}d\mu(y).
\end{split}
\]

According to condition (3) from Theorem \ref{chain condition} we have $d(B_i,\partial\Omega)\geq C_2r_i$ for every $i=0,1,\ldots$, so 
\[d(y)\geq C_2r_i,\qquad y\in B_i,\]
and thus, for any $y,z\in B_i$ we can write $d(y,z)\leq 2r_i\leq \frac{2}{C_2}d(y)$. Hence, by choosing $C_2=\frac{2}{\tau}$,
\[
\begin{split}
&\frac{1}{\mu(B_i)}\int_{B_i}|u(y)-u_{B_i}|d\mu(y)\\
&\lesssim \frac{r_i^s}{\mu(B_i)}\int_{B_i}\left(\int_{\{z\in\Omega:d(y,z)\leq \tau d(y)\}}\frac{|u(y)-u(z)|^pd\mu(z)}{\mu[B(y,d(y,z))]d(y,z)^{s p}}\right)^{1/p}d\mu(y)\\
&= \frac{r_i^s}{\mu(B_i)}\int_{B_i}g_p(y)d\mu(y).
\end{split}
\]

Therefore
$$\sum_{i=0}^{\infty} \frac{1}{\mu(B_i)} \int_{B_i} |u(y)-u_{B_i}|d\mu(y) \lesssim  \sum_{i=0}^{\infty} \frac{r_i^s}{\mu(B_i)} \int_{B_i} g_p(y) d\mu(y).$$

Now, by (2) in Theorem \ref{chain condition}, the doubling and $\delta$- reverse doubling property of $\mu$, we have that
\[
\frac{r_i^s}{\mu(B_i)} \int_{B_i} g_p(y) d\mu(y)\lesssim \int_{B_i} \frac{g_p(y)d(x,y)^s}{\mu[B(x,d(x,y))]} d\mu(y),\qquad i=0,1,\ldots,
\]
and from this and the fact that $d(x,y)\leq c_2/C_2 d(y)$ for every $y\in B_i$, we deduce
\[
\sum_{i=1}^\infty \frac{1}{\mu(B_i)} \int_{B_i} |u(y)-u_{B_i}|d\mu(y) \lesssim  \int_{\Omega\cap\{d(x,y)\leq c_3d(y)\}}  \frac{g_p(y)d(x,y)^s}{\mu[B(x,d(x,y))]} d\mu(y),
\]
where $c_3:=c_2/C_2=\frac{\tau c_2}{2}$.

\end{proof}

We will also need the following lemma for our main theorem. 
\begin{lem}\label{lema maximal}
Let $s>0$. Then, for any $x\in X$ and any $\varepsilon>0$ we have
\[
I_s^\mu(f\chi_{\Omega\cap\{d(x,\cdot)<\varepsilon\}})(x)
\lesssim\varepsilon^s I_0^\mu(f\chi_\Omega)(x).
\]
\end{lem}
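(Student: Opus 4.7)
The plan is to run the standard dyadic annular decomposition of the ball $B(x,\varepsilon)$, exploiting the doubling property of $\mu$ to replace the kernel by the averaging operator defining the maximal function.

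More precisely, fix $x\in X$ and $\varepsilon>0$, and for $k\geq 0$ set
\[
A_k(x):=\{y\in X:2^{-(k+1)}\varepsilon\leq d(x,y)<2^{-k}\varepsilon\}.
\]
Then $\{y:d(x,y)<\varepsilon\}=\bigcup_{k\geq 0}A_k(x)$, so I would split
\[
I_s^\mu(f\chi_{\Omega\cap\{d(x,\cdot)<\varepsilon\}})(x)=\sum_{k=0}^\infty\int_{\Omega\cap A_k(x)}\frac{f(y)\,d(x,y)^s}{\mu[B(x,d(x,y))]}\,d\mu(y).
\]
On $A_k(x)$ I use the two pointwise estimates $d(x,y)^s\leq(2^{-k}\varepsilon)^s$ and $\mu[B(x,d(x,y))]\geq\mu[B(x,2^{-(k+1)}\varepsilon)]$, to obtain
\[
\int_{\Omega\cap A_k(x)}\frac{f(y)\,d(x,y)^s}{\mu[B(x,d(x,y))]}\,d\mu(y)\leq\frac{(2^{-k}\varepsilon)^s}{\mu[B(x,2^{-(k+1)}\varepsilon)]}\int_{\Omega\cap B(x,2^{-k}\varepsilon)}f(y)\,d\mu(y).
\]

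Now I invoke the doubling property of $\mu$ in the form $\mu[B(x,2^{-k}\varepsilon)]\leq c_d\,\mu[B(x,2^{-(k+1)}\varepsilon)]$, so that
\[
\frac{1}{\mu[B(x,2^{-(k+1)}\varepsilon)]}\int_{B(x,2^{-k}\varepsilon)}f\chi_\Omega\,d\mu\leq c_d\cdot\frac{1}{\mu[B(x,2^{-k}\varepsilon)]}\int_{B(x,2^{-k}\varepsilon)}f\chi_\Omega\,d\mu\leq c_d\,M_\mu(f\chi_\Omega)(x).
\]
Summing the resulting geometric series $\sum_{k\geq 0}(2^{-k}\varepsilon)^s=\varepsilon^s(1-2^{-s})^{-1}$ (which converges because $s>0$) gives
\[
I_s^\mu(f\chi_{\Omega\cap\{d(x,\cdot)<\varepsilon\}})(x)\leq\frac{c_d}{1-2^{-s}}\,\varepsilon^s\,I_0^\mu(f\chi_\Omega)(x),
\]
which is the desired bound.

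No step here looks genuinely difficult: the only ingredients are doubling and the elementary geometric sum. The one small thing to watch is that the hypothesis $s>0$ is used precisely to guarantee convergence of $\sum 2^{-ks}$, and that the monotonicity bounds on $d(x,y)^s$ and $\mu[B(x,d(x,y))]$ over the annulus $A_k(x)$ are applied in the right direction so that everything remains an upper bound for the integral on the left.
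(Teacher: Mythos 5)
Your proof is correct and follows essentially the same strategy as the paper's: the paper also performs the dyadic annular decomposition of $B(x,\varepsilon)$ into $B(x,\varepsilon/2^k)\setminus B(x,\varepsilon/2^{k+1})$, bounds the kernel using monotonicity, invokes doubling to compare $\mu[B(x,\varepsilon/2^{k+1})]$ with $\mu[B(x,\varepsilon/2^k)]$, and sums the geometric series $\sum 2^{-ks}$. Your write-up is if anything slightly more explicit than the paper's, but the two arguments are the same.
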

\begin{proof}
A standard argument (see \cite{Hed}) dividing the integral gives us, for any $x\in X$,
\[
\begin{split}
I_s^\mu(f&\chi_{\Omega\cap\{d(x,\cdot)<\varepsilon\}})(x)=\int_{d(x,y)<\varepsilon}\frac{f(y)\chi_\Omega(y)}{\mu[B(x,d(x,y))]d(x,y)^{-s}}d\mu(y)\\
& =\sum_{k=0}^\infty\int_{B\left(x,\frac{\varepsilon}{2^k}\right)\backslash B\left(x,\frac{\varepsilon}{2^{k+1}}\right)}\frac{f(y)\chi_\Omega(y)}{\mu[B(x,d(x,y))]d(x,y)^{-s}}d\mu(y)\\
&\lesssim \varepsilon^s\sum_{k=0}^\infty \frac{1}{\mu\left[B\left(x,\frac{\varepsilon}{2^{k}}\right)\right]2^{s(k+1)}}\int_{B\left(x,\frac{\varepsilon}{2^k}\right)\backslash B\left(x,\frac{\varepsilon}{2^{k+1}}\right)}f(y)\chi_\Omega(y)d\mu(y)\\
&\lesssim \varepsilon^s I_0^\mu(f\chi_\Omega)(x).
\end{split}
\]
\end{proof}

Now we are ready to prove the main results of the section. Recall that we will work with weights of the form $w_{\phi}^F(x)=\phi(d_F(x))$, where $F$ is omitted whenever $F=\partial\Omega$, and that $\phi$ is a positive increasing function that satisfies the growth condition $\phi(2x)\le C \phi(x)$ for all $x\in\R_+$. Observe that this implies $\phi(kx)\le C_k \phi(x)$ for every $k\ge 1$.  We will obtain, at the right hand side of the inequality, a weight of the form $v_{\Phi,\gamma p}^F(x,y)=\min_{z\in\{x,y\}}d(z)^{\gamma p} \Phi(d_F(z))$, where $\Phi$ is an appropriate power of $\phi$.

\begin{thm}\label{P-S inequality}
	Let  $(X,d, \mu)$ be a metric space with $n_\mu$-Ahlfors-David regularity.  Let  $s,\tau\in(0,1)$ and $0\leq \gamma<s\leq n_\mu$. Let  $1<p<\infty$ be such that $(s-\gamma)p<n_\mu$ and take $ p^*_{s-\gamma}:=\frac{n_\mu p}{n_\mu-(s-\gamma)p}$. Let $\Omega \subset X$ be a bounded John domain. Let $F\subset\partial\Omega$ be a compact set. Consider a positive increasing function $\phi$ satisfying the growth condition $\phi(2x)\le C \phi(x)$ and such that $w_{\phi}^F\in L^1_{\loc}(\Omega,d\mu)$, and define the function $\Phi(t)=\phi(t)^{p/p^*_{s-\gamma}}$. Then, for any function $u\in W^{s,p}(\Omega,\mu)$,
	\[
	\inf_{a\in \mathbb{R}}\|u-a\|_{L^{ p^*_{s-\gamma}}(\Omega,w_{\phi}^Fd\mu)}\lesssim [u]_{W_{\tau}^{s,p}\left(\Omega,v_{\Phi,\gamma p}^Fd\mu \right)}.
	\]
\end{thm}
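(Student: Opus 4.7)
The plan is to take $a = u_{B_0}$ (where $B_0$ is the first ball of a John chain furnished by Theorem \ref{chain condition}) in the infimum on the left-hand side, and then chain together Lemma \ref{estimacion puntual}, the $L^p\to L^{p^*_{s-\gamma}}$ boundedness of a fractional integral via Theorem \ref{sawyer y wheeden}, and the definition of the fractional gradient $g_p$ from \eqref{fractional derivative}. Lemma \ref{estimacion puntual} supplies the pointwise bound
\[
|u(x)-u_{B_0}|\lesssim I_s^\mu\bigl(g_p\,\chi_{\Omega\cap\{d(\cdot,x)\le c_3 d(\cdot)\}}\bigr)(x),\qquad \mu\text{-a.e.\ }x\in\Omega,
\]
and the task is to exploit the truncation $d(x,y)\le c_3 d(y)$ to both shift the target weight $w_\phi^F(x)^{1/p^*_{s-\gamma}}$ inside the integral and peel off a factor $d(y)^\gamma$ from its kernel.

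Concretely, I would argue in three technical steps on the set $\{d(x,y)\le c_3 d(y)\}$. First, since $F\subset\partial\Omega$ gives $d_F(y)\ge d(y)$, the triangle inequality yields $d_F(x)\le d_F(y)+d(x,y)\le (1+c_3)d_F(y)$, and the growth condition on $\phi$ then gives $\phi(d_F(x))\lesssim \phi(d_F(y))$; this lets me carry the factor $\phi(d_F(x))^{1/p^*_{s-\gamma}}=\Phi(d_F(x))^{1/p}$ inside the fractional integral. Second, $d(x,y)^\gamma \le c_3^\gamma d(y)^\gamma$ (using $\gamma\ge 0$), so one factor of the fractional kernel is absorbed in $y$, reducing $I_s^\mu$ to $I_{s-\gamma}^\mu$ acting on $d(\cdot)^\gamma \Phi(d_F(\cdot))^{1/p} g_p$. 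Third, I would apply Theorem \ref{sawyer y wheeden} to bound $I_{s-\gamma}^\mu$ from $L^p(\Omega,\mu)$ to $L^{p^*_{s-\gamma}}(\Omega,\mu)$; the $n_\mu$-Ahlfors-David regularity furnishes the exponent $\eta=n_\mu$ required by the discussion immediately after that theorem, and the identity $\frac{1}{p}-\frac{1}{p^*_{s-\gamma}}=\frac{s-\gamma}{n_\mu}$ is exactly the admissibility condition $p^*_{s-\gamma}=\frac{\eta p}{\eta-(s-\gamma)p}$.

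To close the argument I would unfold the definition of $g_p$ and observe that on the set $\{d(z,y)\le \tau d(y)\}$, with $\tau<1$, one has $d(z)\asymp d(y)$ and, by the same triangle-inequality plus doubling argument, $\Phi(d_F(z))\asymp \Phi(d_F(y))$. Hence $d(y)^{\gamma p}\Phi(d_F(y))$ is comparable to $v_{\Phi,\gamma p}^F(z,y)=\min_{w\in\{z,y\}} d(w)^{\gamma p}\Phi(d_F(w))$, so the resulting double integral is, up to constants, exactly $[u]_{W_\tau^{s,p}(\Omega, v_{\Phi,\gamma p}^F d\mu)}^p$. The main obstacle is the bookkeeping: three distinct distance comparisons (between $x$ and $y$ inside the fractional integral, between $y$ and $z$ inside $g_p$, and between $d$ and $d_F$) must interact cleanly with the growth condition on $\phi$ and with doubling of $\mu$ in order to produce the sharp Sobolev exponent $p^*_{s-\gamma}$ from the decomposition $s=(s-\gamma)+\gamma$ without losing either the target weight or the integrability.
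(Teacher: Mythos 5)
Your proposal is correct and follows essentially the same path as the paper's proof: Lemma \ref{estimacion puntual}, the decomposition $s=(s-\gamma)+\gamma$, using the truncation $d(x,y)\le c_3 d(y)$ together with the growth of $\phi$ to shift $d(y)^\gamma$ and $\Phi(d_F(y))^{1/p}$ out of the kernel, the boundedness of $I^\mu_{s-\gamma}$ from Theorem \ref{sawyer y wheeden} (guaranteed by the $n_\mu$-Ahlfors--David regularity), and finally the comparability of $d$, $d_F$ and $\phi\circ d_F$ at $y$ and $z$ on the set $\{d(z,y)\le\tau d(y)\}$ to recover $v_{\Phi,\gamma p}^F$. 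The only cosmetic difference is that the paper packages the last two steps by duality, pairing $u-u_{B_0}$ against a unit-norm $f\in L^{(p^*_{s-\gamma})'}(\Omega,w_\phi^F d\mu)$ and applying the dual bound $I^\mu_{s-\gamma}:L^{(p^*_{s-\gamma})'}\to L^{p'}$, whereas you apply the (equivalent, since the kernel is symmetric up to doubling) bound $I^\mu_{s-\gamma}:L^p\to L^{p^*_{s-\gamma}}$ directly; both rest on the same verification of condition \eqref{condicion del peso SW}.
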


\begin{thm}\label{(p,p)} Let  $(X,d, \mu)$ be a metric space with $\mu$ a doubling measure satisfying a $\delta$- reverse doubling property. Consider $w_{\phi}^F$, $F$ and $\phi$ as in the statement of Theorem \ref{P-S inequality}. For $1\leq p<\infty$ and $0<s\leq \delta$ we have the following (p,p) Poincar\'e inequality

	\[
	\inf_{a\in \mathbb{R}}\|u-a\|_{L^ p(\Omega,w_{\phi}^Fd\mu)}\lesssim [u]_{W_{\tau}^{s,p}\left(\Omega,v_{\Phi,s p}^Fd\mu \right)}.
	\]
\end{thm}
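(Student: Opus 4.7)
The strategy parallels the proof of Theorem \ref{P-S inequality}, but since $q=p$ rules out using Theorem \ref{sawyer y wheeden}, I would replace the boundedness of $I_s^\mu$ between distinct Lebesgue spaces by a direct H\"older--Fubini computation. Taking $a=u_{B_0}$ in the infimum and applying Lemma \ref{estimacion puntual}, for $\mu$-a.e.\ $x\in\Omega$,
\[
|u(x)-u_{B_0}|\lesssim \int_{\Omega\cap\{d(y,x)\le c_3 d(y)\}}\frac{g_p(y)\,d(x,y)^s}{\mu[B(x,d(x,y))]}\,d\mu(y).
\]
The seminorm $[u]_{W^{s,p}_\tau(\Omega,v^F_{\Phi,sp}d\mu)}$ is monotone in $\tau$ (larger $\tau$ enlarges the region of integration while the weight itself is $\tau$-independent), so it suffices to prove the inequality for $\tau$ so small that $c_3<1/2$. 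The geometry of the region $\{d(y,x)\le c_3 d(y)\}$ then forces $d(x,y)\lesssim d(x)$, $d(y)\asymp d(x)$, and, since $d\le d_F$ and $\phi$ is doubling, $\phi(d_F(y))\asymp \phi(d_F(x))$.

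Denote the kernel $K(x,y):=\frac{d(x,y)^s}{\mu[B(x,d(x,y))]}\chi_{\{d(y,x)\le c_3 d(y)\}}(y)$. For $p>1$, splitting $K=K^{1/p}K^{1/p'}$ and applying H\"older's inequality gives
\[
|u(x)-u_{B_0}|^p\lesssim \left(\int K(x,y)\,d\mu(y)\right)^{p/p'}\int g_p(y)^p K(x,y)\,d\mu(y).
\]
A dyadic annular decomposition around $x$ using the doubling property of $\mu$ yields $\int K(x,y)\,d\mu(y)\lesssim d(x)^s$ (the geometric series converges because $s>0$), so
\[
|u(x)-u_{B_0}|^p\lesssim d(x)^{sp/p'}\int g_p(y)^p K(x,y)\,d\mu(y),
\]
the case $p=1$ being the same inequality with $sp/p'$ read as $0$.

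Multiplying by $\phi(d_F(x))$, integrating over $\Omega$ and applying Fubini, the support condition on $K$ lets me replace $d(x)^{sp/p'}\phi(d_F(x))$ by $d(y)^{sp/p'}\phi(d_F(y))$ up to a constant; the remaining $x$-integral is again controlled by $d(y)^s$ through the annular estimate, now centered at $y$. Since $sp/p'+s=sp$, this produces
\[
\int_\Omega |u-u_{B_0}|^p\phi(d_F)\,d\mu\lesssim \int_\Omega g_p(y)^p\,d(y)^{sp}\phi(d_F(y))\,d\mu(y).
\]
Expanding $g_p^p$ and exploiting once more that in the region $\{d(z,y)\le \tau d(y)\}$ appearing in its definition we have $d(y)^{sp}\phi(d_F(y))\asymp \min_{\zeta\in\{y,z\}}d(\zeta)^{sp}\phi(d_F(\zeta))=v^F_{\Phi,sp}(y,z)$ (with $\Phi=\phi$, matching $\gamma=s$ in the notation of Theorem \ref{P-S inequality}), this last integral is comparable to $[u]^p_{W^{s,p}_\tau(\Omega,v^F_{\Phi,sp}d\mu)}$, which closes the argument.

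The only real obstacle is the bookkeeping needed to verify the local comparability of the weights $d^{sp}$ and $\phi(d_F)$ at the two endpoints in each of the integration regions encountered in the proof (the one from Lemma \ref{estimacion puntual} and the one built into $g_p$). This is what forces the preliminary reduction to small $\tau$; the hypothesis $d\le d_F$ combined with the doubling growth of $\phi$ then does all the work. Beyond this, the analytic ingredients are merely H\"older's inequality, Fubini's theorem, and a standard dyadic annular bound built on the doubling property of $\mu$; the reverse doubling hypothesis is consumed entirely inside Lemma \ref{estimacion puntual}.
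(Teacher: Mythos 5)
Your proof is correct, and it takes a genuinely different route from the paper's. The paper disposes of Theorem~\ref{(p,p)} by referring back to the proof of Theorem~\ref{P-S inequality}: it proceeds by duality, pairing $|u-u_{B_0}|$ against $f\in L^{p'}(\Omega,w_\phi^F d\mu)$ with unit norm, applies Lemma~\ref{estimacion puntual} and Fubini exactly as you do, and then closes the argument using Lemma~\ref{lema maximal} to replace the truncated fractional integral by $d(y)^s M_\mu(f[w_\phi^F]^{1/p'})(y)$, finishing with H\"older and the $L^{p'}$-boundedness of the Hardy--Littlewood maximal operator (and its $L^\infty$-boundedness for $p=1$). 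You instead avoid duality entirely: you split the kernel $K=K^{1/p}K^{1/p'}$ and run a Schur-test type estimate, using the annular geometric-series bound $\int K(x,y)\,d\mu(y)\lesssim d(x)^s$ twice (once in each variable) and transferring the weight across the support of $K$. The annular bound you invoke is essentially Lemma~\ref{lema maximal} applied to the constant function, so the geometric input is the same, but your closing step is more elementary --- it never needs the $L^{p'}$-boundedness of $M_\mu$, only H\"older and Fubini. The trade-offs are minor: the paper's duality route runs literally in parallel with the proof of Theorem~\ref{P-S inequality} and treats $p=1$ with zero extra work, while your route requires the preliminary reduction to small $\tau$ (to guarantee the local comparabilities $d(x)\asymp d(y)$, $\phi(d_F(x))\asymp\phi(d_F(y))$ on $\{d(y,x)\leq c_3 d(y)\}$) and a separate, though trivial, remark about $p=1$. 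Both are valid; yours is arguably more self-contained. You are also right that the reverse doubling hypothesis enters only through Lemma~\ref{estimacion puntual}.
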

In the sequel we will give only the proof of Theorem \ref{P-S inequality} given that the proof of Theorem \ref{(p,p)} follows in the same way, recalling that $I_0^\mu f$ stands for the Hardy-Littlewood maximal operator. The $(1,1)$-inequality follows with this proof from the boundedness of $I_0^\mu$ in $L^\infty$.
\begin{proof}[Proof of Theorem \ref{P-S inequality}]
We proceed by duality. Let $f$ be such that $\|f\|_{L^{({p^*_{s-\gamma}})'}(\Omega, w_{\phi}^Fd\mu)}=1$.  Then, by Lemma \ref{estimacion puntual} and Tonelli's theorem, we have
\begin{equation}\label{acotacion}
\begin{aligned}
&\int_\Omega |u(x)-u_{B_0}|f(x)w_{\phi}^F(x)d\mu(x)\lesssim \int_\Omega I_s^\mu\left(g_p\chi_{\Omega\cap\{d(\cdot,x)\leq c_3d(\cdot)\}}\right)(x) f(x)w_{\phi}^F(x)d\mu(x)\\
&= \int_\Omega \int_{\{x\in\Omega:d(y,x)\leq c_3d(y)\}}\frac{ f(x)  [w_{\phi}^F(x)]^{\frac{1}{({p^*_{s-\gamma}})'}+\frac{1}{{p^*_{s-\gamma}}}}d(x,y)^{s-\gamma+\gamma}}{\mu\left[B(x,d(x,y))\right]}  d\mu(x)g_p(y)d\mu(y).
\end{aligned}
\end{equation}

Now observe that, by hypothesis, $\phi((1+c_3)t)\lesssim \phi(t)$. Hence, using  H\"older's inequality and the boundedness properties of the operator (in the case of Theorem \ref{(p,p)} we also use Lemma \ref{lema maximal} and the boundedness of the Hardy-Littlewood maximal function) we may continue \eqref{acotacion} with
\[
\begin{split}
&\int_\Omega I_{s-\gamma}^\mu\left[f[w_{\phi}^F]^\frac{1}{({p^*_{s-\gamma}})'}\chi_{\Omega\cap\{d(\cdot,y)\leq c_3d(y)\}}\right](y)d(y)^\gamma \phi[d_F(y)]^\frac{1}{p^*_{s-\gamma}} g_p(y)d\mu(y)\\
&\lesssim \left(\int_\Omega \int_{\{x\in \Omega:d(x,y)\leq \tau d(y)\}}\frac{|u(x)-u(y)|^pd(y)^{\gamma p}\phi[d_F(y)]^{\frac{p}{p^*_{s-\gamma}}}}{\mu\left[B(x,d(x,y))\right]d(x,y)^{sp}}d\mu(x)d\mu(y)\right)^{1/p}\\
&\lesssim \left(\int_\Omega \int_{\{x\in \Omega:d(x,y)\leq \tau d(y)\}}\frac{|u(x)-u(y)|^pv_{\Phi,\gamma p}^F(x,y)}{\mu\left[B(x,d(x,y))\right]d(x,y)^{sp}}d\mu(x)d\mu(y)\right)^{1/p}\\
&=[u]_{W_{\tau}^{s,p}\left(\Omega,v_{\Phi,\gamma p}^Fd\mu \right)}.
\end{split}
\]
\end{proof}
\begin{obs}
It is possible to prove the same result whenever $(X,d,\mu)$ is a metric space with an $n_\mu$-doubling and $\delta$-reverse doubling measure $\mu$ with $\eta$-lower Ahlfors-David regularity for some $(s-\gamma)p<\eta$, where $0\leq \gamma<s\leq \delta$ and $p>1$. In this case, the result is obtained with an $L^{p^*_{s-\gamma}}$ norm at the left-hand side, where $p^*_{s-\gamma}:=\frac{\eta p}{\eta-(s-\gamma)p}$. We remark that the $(p,p)$ (which corresponds to Theorem \ref{(p,p)}) does not need the lower Ahlfors-David regularity hypothesis as just the doubling property is needed for the boundedness of the Hardy-Littlewood maximal operator. Nevertheless, we decided to ask for more regularity in order to get cleaner statements. It should be noted that the growth condition on $\phi$ is not  actually necessary, but the results are much cleaner assuming this condition. \end{obs}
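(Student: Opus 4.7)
The plan is to repeat the duality argument from the proof of Theorem \ref{P-S inequality} verbatim, but to audit the proof for every use of the regularity of $\mu$ and replace $n_\mu$ by $\eta$ exactly at the point where the Sobolev exponent is fixed. Since only the fractional-integral mapping property is sensitive to \emph{lower} bounds on $\mu(B)$, this change propagates only through the definition of $p^*_{s-\gamma}$.

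First, I would invoke Lemma \ref{estimacion puntual} to get, for $\mu$-a.e.\ $x\in\Omega$,
$$|u(x)-u_{B_0}| \lesssim I_s^\mu\bigl(g_p\chi_{\Omega\cap\{d(\cdot,x)\le c_3 d(\cdot)\}}\bigr)(x).$$
This lemma only uses that $\mu$ is doubling and $\delta$-reverse doubling with $s\le\delta$, so it applies under the weaker hypotheses of the observation. Then I would dualize: test against an $f$ with $\|f\|_{L^{(p^*_{s-\gamma})'}(\Omega,w_\phi^Fd\mu)}=1$, split the kernel as $d(x,y)^s=d(x,y)^{s-\gamma}d(x,y)^\gamma$, split the weight as $w_\phi^F(x)=[w_\phi^F(x)]^{1/(p^*_{s-\gamma})'}[w_\phi^F(x)]^{1/p^*_{s-\gamma}}$, and use that $d(x,y)\le c_3 d(y)\le c_3 d_F(y)$ implies $d_F(x)\le (1+c_3)d_F(y)$, so that the growth condition yields $\phi(d_F(x))\lesssim \phi(d_F(y))$ and $d(x,y)^\gamma\lesssim d(y)^\gamma$. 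This moves the $\gamma$-power and one piece of the weight out of the fractional integral, exactly as in the original argument.

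At this point one is reduced to estimating
$$\int_\Omega I_{s-\gamma}^\mu\bigl[f[w_\phi^F]^{1/(p^*_{s-\gamma})'}\chi_\Omega\bigr](y)\, d(y)^\gamma\, \phi(d_F(y))^{1/p^*_{s-\gamma}}\, g_p(y)\, d\mu(y),$$
to which I would apply H\"older with exponents $p'$ and $p$. The $L^p$ factor is precisely $[u]_{W_\tau^{s,p}(\Omega,v_{\Phi,\gamma p}^F d\mu)}$, and the remaining $L^{p'}$ factor is controlled provided
$$I_{s-\gamma}^\mu\colon L^{(p^*_{s-\gamma})'}(X,d\mu)\longrightarrow L^{p'}(X,d\mu)$$
is bounded; by duality this is the same as $I_{s-\gamma}^\mu\colon L^p\to L^{p^*_{s-\gamma}}$, after which composition with $f\mapsto f[w_\phi^F]^{1/(p^*_{s-\gamma})'}$ and the normalization of $f$ gives a constant factor.

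The key step, and the only place where the regularity hypothesis actually enters, is verifying this mapping property. By the discussion following Theorem \ref{sawyer y wheeden}, the Sawyer--Wheeden condition \eqref{condicion del peso SW} for this pair of exponents reduces to
$$\sup_{B\subset X}r(B)^{s-\gamma}\mu(B)^{1/p^*_{s-\gamma}-1/p}<\infty.$$
With the observation's choice $p^*_{s-\gamma}=\eta p/(\eta-(s-\gamma)p)$, one computes $(s-\gamma)\cdot pp^*_{s-\gamma}/(p^*_{s-\gamma}-p)=\eta$, so the required estimate is $r(B)^{s-\gamma}\mu(B)^{-(s-\gamma)/\eta}\lesssim 1$, which is exactly $\eta$-lower Ahlfors--David regularity (the assumption $(s-\gamma)p<\eta$ ensures $p<p^*_{s-\gamma}<\infty$ so that Theorem \ref{sawyer y wheeden} applies, and $n_\mu$-doubling is what makes the quantity $\varphi(B)$ of Theorem \ref{sawyer y wheeden} comparable to $r(B)^{s-\gamma}/\mu(B)$). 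No other step uses anything beyond doubling and reverse doubling. The ``obstacle'' is therefore almost entirely bookkeeping: matching the exponent $\eta$ produced by the lower ADR hypothesis to the Sobolev exponent appearing on the left-hand side, and checking that all constants arising from the weight-splitting and the growth condition on $\phi$ are independent of $u$.
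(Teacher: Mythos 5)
Your proposal is correct and follows essentially the route the paper intends: the remark is proved by rerunning the duality argument of Theorem \ref{P-S inequality} verbatim, noting that Lemma \ref{estimacion puntual} uses only doubling and $\delta$-reverse doubling, and that the sole use of regularity is the Sawyer--Wheeden condition for $I_{s-\gamma}^\mu$, which with $q=p^*_{s-\gamma}=\eta p/(\eta-(s-\gamma)p)$ reduces exactly to $\eta$-lower Ahlfors--David regularity. Your exponent bookkeeping ($(s-\gamma)pq/(q-p)=\eta$) checks out, so nothing is missing.
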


  In what follows, we prove the $(1^*_{s-\gamma},1)$-inequality  by requiring some stronger properties on our measure $\mu$. For this, we will use the following lemma, which is a generalization of a well-known result which can be found, for instance, in the book by Jost \cite{J}.
\begin{lem}\label{lemmaJost}
 Take $0<\gamma<s$, $\eta>0$  and $q>1$. Let $(X,d,\mu)$ be a metric space with $\mu$ an $\eta$-upper and $(s-\gamma)q'$-lower Ahlfors-David regular measure. Let $x\in X$ and suppose that for any measurable bounded set $F$ with positive measure there is a ball $B(x,R)$ with comparable measure to that of the set $F$.  Then for any measurable set $E$ with positive measure we have that
 \[
 \int_E\frac{d\mu(y)}{d(x,y)^{(s-\gamma)q'-(s-\gamma)}}\lesssim \mu(E)^{\frac{\eta+s-\gamma}{(s-\gamma)q'}-1}.
 \]
\end{lem}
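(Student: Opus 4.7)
The strategy I would use is the classical \emph{rearrangement towards the center $x$} argument: reduce the problem to integrating the (decreasing in $d(x,y)$) integrand over a ball centered at $x$ whose measure is comparable to $\mu(E)$, and then estimate this ball integral via the two-sided Ahlfors--David regularity. Set $\alpha:=(s-\gamma)q'-(s-\gamma)=(s-\gamma)(q'-1)>0$, and use the hypothesis on comparable balls to select $R>0$ such that $\mu(E)\leq \mu(B(x,R))\lesssim \mu(E)$, writing $B:=B(x,R)$.

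The rearrangement step is to show that
\[
\int_E d(x,y)^{-\alpha}\,d\mu(y)\lesssim \int_{B} d(x,y)^{-\alpha}\,d\mu(y).
\]
I would split both integrals along the common piece $E\cap B$. On $E\setminus B$ one has $d(x,y)\geq R$, on $B\setminus E$ one has $d(x,y)\leq R$, and the inequality $\mu(E)\leq \mu(B)$ gives $\mu(E\setminus B)\leq \mu(B\setminus E)$. Combining these three facts,
\[
\int_{E\setminus B} d(x,y)^{-\alpha}\,d\mu(y)\leq R^{-\alpha}\mu(E\setminus B)\leq R^{-\alpha}\mu(B\setminus E)\leq \int_{B\setminus E} d(x,y)^{-\alpha}\,d\mu(y),
\]
and the desired comparison follows after adding back the integral over $E\cap B$.

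For the integral over $B$, I would use a dyadic annular decomposition $A_k:=B(x,R/2^k)\setminus B(x,R/2^{k+1})$, $k\geq 0$, together with the $\eta$-upper Ahlfors--David regularity $\mu(B(x,r))\leq c_u r^\eta$, to obtain
\[
\int_B d(x,y)^{-\alpha}\,d\mu(y)\leq \sum_{k\geq 0}(R/2^{k+1})^{-\alpha}\mu(B(x,R/2^k))\lesssim R^{\eta-\alpha}\sum_{k\geq 0}2^{-k(\eta-\alpha)},
\]
which converges when $\eta>\alpha$. Finally, the $(s-\gamma)q'$-lower Ahlfors--David regularity gives $R^{(s-\gamma)q'}\lesssim \mu(B)\asymp \mu(E)$, so $R\lesssim \mu(E)^{1/((s-\gamma)q')}$, and substituting,
\[
R^{\eta-\alpha}\lesssim \mu(E)^{(\eta-\alpha)/((s-\gamma)q')}=\mu(E)^{(\eta+s-\gamma)/((s-\gamma)q')-1},
\]
which is precisely the claimed bound.

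The main obstacle I anticipate is making the rearrangement step clean when $\mu(E)$ and $\mu(B(x,R))$ are only comparable (not equal); this is handled by the specific choice $\mu(B(x,R))\geq \mu(E)$ so that the set-theoretic inequality $\mu(E\setminus B)\leq \mu(B\setminus E)$ survives. A secondary technical point is the convergence condition $\eta>\alpha$, equivalently $(s-\gamma)q'<\eta+(s-\gamma)$, whose compatibility with the two-sided Ahlfors--David regularity hypotheses should be checked, but it is natural since the dyadic sum is dominated by the behavior near $x$ and is the very mechanism that produces the exponent appearing in the conclusion.
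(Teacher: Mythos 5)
Your argument is correct and arrives at the same estimate the paper obtains, but the two proofs are organized differently. Both pick a ball $B=B(x,R)$ with $\mu(B)\asymp\mu(E)$, estimate the kernel integral over $B$ by $R^{\eta-\alpha}$ with $\alpha=(s-\gamma)(q'-1)$, and then convert $R$ into $\mu(E)^{1/((s-\gamma)q')}$ via the lower regularity. The paper, however, does not rearrange: it splits $\int_E=\int_{E\setminus B}+\int_{E\cap B}$, bounds the first piece by $R^{-\alpha}\mu(E)\lesssim R^{-\alpha}R^{\eta}$ (using upper regularity on the comparable ball), and bounds the second piece by $\int_B$ via a cited result (Lemma 2.1 of \cite{GCG}). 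You instead first prove the rearrangement inequality $\int_E\lesssim\int_B$ by matching $E\setminus B$ against $B\setminus E$ through $\mu(E\setminus B)\leq\mu(B\setminus E)$, and then establish the ball estimate from scratch by the dyadic annulus decomposition. Your route is more self-contained and makes the convergence condition $\eta>\alpha$ visible (it is indeed needed, and also implicit in the cited lemma; in the paper's application within Theorem~3, where $\eta=n_\mu=(s-\gamma)q'$, it reduces to $s-\gamma>0$, which is assumed). One point you should not elide: the rearrangement step uses the exact inequality $\mu(E)\leq\mu(B(x,R))$, whereas the hypothesis only guarantees comparability. This can be arranged -- starting from a comparable ball, increase the radius dyadically until $\mu(B)\geq\mu(E)$; the doubling property keeps $\mu(B)\lesssim\mu(E)$ at the first radius where this happens -- but it deserves a sentence, since the paper avoids the issue by not requiring the one-sided inequality.
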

\begin{proof}
Let $R>0$ be such that the ball $B:=B(x,R)$ verifies $\mu(B)\asymp\mu(E)$.  For this $R$, write 
\[
 \int_E\frac{d\mu(y)}{d(x,y)^{(s-\gamma)q'-(s-\gamma)}}= \left(\int_{E\setminus(E\cap B)}+\int_{E\cap B}\right)\frac{d\mu(y)}{d(x,y)^{(s-\gamma)q'-(s-\gamma)}}
.\]
On one hand, we note that for $y\in E\setminus(E\cap B)$, we have $d(x,y)\geq R$, so
\[
 \int_{E\setminus(E\cap B)}\frac{d\mu(y)}{d(x,y)^{(s-\gamma)q'-(s-\gamma)}}\leq  \int_{E\setminus(E\cap B)}\frac{d\mu(y)}{R^{(s-\gamma)q'-s}}\leq R^{\eta-(s-\gamma)q'+s-\gamma},
\]
as $\mu$ is an $\eta$-upper Ahlfors-David regular measure.

On the other hand, for $y\in B$, we can use Lemma 2.1 in \cite{GCG}, so we obtain
\[
 \int_{E\cap B}\frac{d\mu(y)}{d(x,y)^{(s-\gamma)q'-(s-\gamma)}}\leq 
 \int_{B}\frac{d\mu(y)}{d(x,y)^{(s-\gamma)q'-(s-\gamma)}}\lesssim R^{\eta-(s-\gamma)q'+s-\gamma}.
 \]
 Thus, as $\mu$ is $(s-\gamma)q'$-lower Ahlfors-David regular and $\mu(B)=\mu(E)$, we finally get
  \[
 \int_E\frac{d\mu(y)}{d(x,y)^{(s-\gamma)q'-(s-\gamma)}}\lesssim \mu(E)^{\frac{\eta+s-\gamma}{(s-\gamma)q'}-1}.
 \]
\end{proof}

\begin{obs}
If $\mu$ is an $n_\mu$-Ahlfors-David regular measure, then the space $(X,d,\mu)$ satisfies that for any  bounded measurable set $F$ there exists a ball of comparable size, and, hence, Lemma \ref{lemmaJost} holds. Indeed, for any $x\in X$ it suffices to take the ball $B\left(x,\frac{\mu(F)^{1/n_\mu}}{2c_u^{1/n_\mu}}\right)$. 
\end{obs}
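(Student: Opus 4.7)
The content of this Remark is the assertion that $n_\mu$-Ahlfors--David regular spaces automatically fulfill the ``comparable ball'' hypothesis of Lemma \ref{lemmaJost}: for each $x \in X$ and each bounded measurable $F$ with $\mu(F) > 0$, one can produce an $R > 0$ with $\mu(B(x,R)) \asymp \mu(F)$. My plan is therefore to prove the existence of such an $R$, since the subsequent conclusion ``Lemma \ref{lemmaJost} holds'' is then an immediate corollary.

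The key observation is that \eqref{ahlfors} pins the map $R \mapsto \mu(B(x,R))$ between two constant multiples of $R^{n_\mu}$, so inverting this relation produces the required $R$. Concretely, I would set $R := \mu(F)^{1/n_\mu}/(2 c_u^{1/n_\mu})$, so that $R^{n_\mu} = \mu(F)/(2^{n_\mu} c_u)$. Feeding this into the upper half of \eqref{ahlfors} yields $\mu(B(x,R)) \leq c_u R^{n_\mu} = \mu(F)/2^{n_\mu}$, while the lower half yields $\mu(B(x,R)) \geq c_l R^{n_\mu} = (c_l/(2^{n_\mu} c_u))\, \mu(F)$. Together these give $\mu(B(x,R)) \asymp \mu(F)$ with constants depending only on $n_\mu$, $c_l$ and $c_u$. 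The peculiar normalization $2 c_u^{1/n_\mu}$ in the denominator is designed precisely so that the upper estimate is visibly bounded by $\mu(F)$ itself.

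The one bookkeeping step is to ensure that the chosen $R$ satisfies $R < \diam X$, since \eqref{ahlfors} is asserted only for such radii. If $\diam X = \infty$ there is nothing to check; otherwise, writing $D := \diam X$, the inclusion $F \subset X \subset B(x, D)$ for any $x \in X$ combined with upper AD regularity forces $\mu(F) \leq c_u D^{n_\mu}$, hence $R \leq D/2 < D$. This is the only place where something could conceivably fail, but it does not: the argument reduces to an algebraic inversion of the two-sided AD regularity bound and carries no substantial obstacle.
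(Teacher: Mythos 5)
Your proof is correct and is essentially the fleshed-out version of the paper's one-line hint: the paper simply points to the ball $B\left(x,\mu(F)^{1/n_\mu}/(2c_u^{1/n_\mu})\right)$ and leaves the algebraic inversion of the two-sided Ahlfors--David bound, together with the check that this radius falls below $\diam X$, to the reader. You supply exactly those two steps, so the argument matches the paper's intended route.
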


\begin{thm}
Theorem \ref{P-S inequality} also holds for $p=1$.\end{thm}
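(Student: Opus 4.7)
For $p=1$ the duality technique of Theorem \ref{P-S inequality} breaks down, since after pairing with $f\in L^{(1^*_{s-\gamma})'}(\Omega,w_\phi^F d\mu)$ the natural H\"older step would require an $L^\infty$-bound on the localized fractional integral; a direct calculation using the $n_\mu$-Ahlfors--David regularity shows that $\int_{B(y,c_3 d(y))}\bigl(d(x,y)^{s-\gamma}/\mu[B(x,d(x,y))]\bigr)^{1^*_{s-\gamma}}d\mu(x)$ diverges (the kernel lies in $L^{1^*_{s-\gamma},\infty}$ but not in $L^{1^*_{s-\gamma}}$). The plan is therefore to first prove a weighted \emph{weak-type} endpoint estimate using Lemma \ref{lemmaJost} (whose applicability is exactly what the remark preceding this theorem ensures) and then upgrade to strong type via Maz'ya's truncation method, following the Euclidean strategy of \cite{DIV}.

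For the weak-type step, Lemma \ref{estimacion puntual} combined with $d(x,y)^s\le c_3^\gamma d(y)^\gamma d(x,y)^{s-\gamma}$ (valid on $\{d(x,y)\le c_3 d(y)\}$) yields $|u(x)-u_{B_0}|\lesssim I_{s-\gamma}^\mu\bigl(g_1 d^\gamma\chi_{\{d(\cdot,x)\le c_3 d(\cdot)\}}\bigr)(x)$. By Kolmogorov's characterization of weak-$L^{1^*_{s-\gamma}}$, it suffices to verify that for every measurable $E\subset\Omega$,
\[
\int_E I_{s-\gamma}^\mu(g_1 d^\gamma\chi)(x)\,w_\phi^F(x)\,d\mu(x)\lesssim w_\phi^F(E)^{1/(1^*_{s-\gamma})'}\,[u]_{W_\tau^{s,1}(\Omega,v_{\Phi,\gamma}^F d\mu)}.
\]
Switching the order of integration and using the growth hypothesis $\phi((1+c_3)t)\lesssim\phi(t)$ (which gives $w_\phi^F(x)\asymp\phi(d_F(y))$ on $B(y,c_3 d(y))$), the left-hand side is comparable to $\int g_1(y)d(y)^\gamma\phi(d_F(y))\int_{E\cap B(y,c_3 d(y))} d(x,y)^{s-\gamma}/\mu[B(x,d(x,y))]\,d\mu(x)\,d\mu(y)$. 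Lemma \ref{lemmaJost} applied with $q=1^*_{s-\gamma}$ and $\eta=n_\mu$ controls the inner integral by $\mu(E\cap B(y,c_3 d(y)))^{1/(1^*_{s-\gamma})'}\le (w_\phi^F(E)/\phi(d_F(y)))^{1/(1^*_{s-\gamma})'}$; combining with the algebraic identity $\phi^{1-1/(1^*_{s-\gamma})'}=\phi^{1/1^*_{s-\gamma}}=\Phi$ and the observation $\int g_1 d^\gamma\Phi(d_F)\,d\mu\asymp [u]_{W_\tau^{s,1}(\Omega,v_{\Phi,\gamma}^F d\mu)}$ (using $v_{\Phi,\gamma}^F\asymp d(y)^\gamma\Phi(d_F(y))$ under $d(z,y)\le\tau d(y)$) closes the weak-type estimate.

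To upgrade from weak to strong type, I would apply Maz'ya truncation to the dyadic slices
\[
u^{(k)}(x):=\max\{0,\min\{|u(x)-u_{B_0}|-2^k,\,2^k\}\},\qquad k\in\mathbb{Z},
\]
which satisfy $0\le u^{(k)}\le 2^k$, $u^{(k)}=2^k$ on $\{|u-u_{B_0}|>2^{k+1}\}$, the Lipschitz bound $|u^{(k)}(x)-u^{(k)}(y)|\le |u(x)-u(y)|$, and the telescoping identity $\sum_k u^{(k)}=|u-u_{B_0}|$ together with $\sum_k|u^{(k)}(x)-u^{(k)}(y)|\lesssim |u(x)-u(y)|$. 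Applying the weak-type inequality to each $u^{(k)}$ yields $2^k w_\phi^F(\{|u-u_{B_0}|>2^{k+1}\})^{1/1^*_{s-\gamma}}\lesssim [u^{(k)}]_{W_\tau^{s,1}(\Omega,v_{\Phi,\gamma}^F d\mu)}$. The dyadic layer-cake $\|u-u_{B_0}\|_{L^{1^*_{s-\gamma}}(w_\phi^F d\mu)}^{1^*_{s-\gamma}}\asymp \sum_k 2^{k\cdot 1^*_{s-\gamma}}w_\phi^F(\{|u-u_{B_0}|>2^k\})$, the elementary inequality $\sum_k a_k^r\le(\sum_k a_k)^r$ (valid for $r=1^*_{s-\gamma}\ge 1$ and $a_k\ge 0$), and Fubini combined with the pointwise telescoping then bound $\sum_k[u^{(k)}]_{W_\tau^{s,1}(\Omega,v_{\Phi,\gamma}^F d\mu)}$ by $[u]_{W_\tau^{s,1}(\Omega,v_{\Phi,\gamma}^F d\mu)}$, delivering the strong-type inequality.

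The main obstacle is the precise alignment of the weights in the weak-type estimate: one must verify that Lemma \ref{lemmaJost} delivers exactly the exponent $1/(1^*_{s-\gamma})'$ needed for Kolmogorov's criterion, and that the growth of $\phi$ really permits $w_\phi^F(x)\asymp w_\phi^F(y)$ on the relevant ball so that the factorization $\phi(d_F(y))\cdot\phi(d_F(y))^{-1/(1^*_{s-\gamma})'}=\Phi(d_F(y))$ emerges cleanly. A secondary technicality is the passage from the weak-type inequality (formulated modulo subtraction of an arbitrary constant) to the form involving $u_{B_0}$; this is resolved by the fact that $u^{(k)}\ge 0$ together with a case analysis on whether $(u^{(k)})_{B_0}\lessgtr 2^{k-1}$.
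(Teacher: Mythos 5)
Your overall strategy (prove a weak-type endpoint estimate via Lemma~\ref{estimacion puntual} and Lemma~\ref{lemmaJost}, then upgrade to strong type by the Maz'ya truncation argument) matches the paper's approach, and the truncation step you sketch is essentially the ``weak implies strong'' argument the paper delegates to references. However, there is a genuine gap in the weak-type step.

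You claim that the growth hypothesis $\phi((1+c_3)t)\lesssim\phi(t)$ yields the \emph{two-sided} comparability $w_\phi^F(x)\asymp\phi(d_F(y))$ on all of $B(y,c_3 d(y))$, and you use the lower bound $w_\phi^F(x)\gtrsim\phi(d_F(y))$ in the step where you replace $\mu\bigl(E\cap B(y,c_3 d(y))\bigr)$ by $w_\phi^F(E)/\phi(d_F(y))$. But the lower bound requires $d_F(x)\gtrsim d_F(y)$ for $x\in B(y,c_3 d(y))$, which in turn requires $c_3<1$: otherwise $d_F(y)\le d_F(x)+d(x,y)\le d_F(x)+c_3 d(y)\le d_F(x)+c_3 d_F(y)$ gives no useful information. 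Since $c_3=\tfrac{\tau c_2}{2}$ with $c_2$ the chaining constant of Theorem~\ref{chain condition}, there is no reason for $c_3<1$ in general. The growth condition on $\phi$ (which is one-sided, it controls $\phi$ at \emph{larger} arguments) only gives $\phi(d_F(x))\lesssim\phi(d_F(y))$ on the ball, not the reverse.

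The paper's proof avoids this by splitting the inner integral over $E\cap B(y,c_3 d(y))$ into $E_1=\{d(x,y)\le\tau d(y)\}$ and $E_2$ its complement. On $E_1$, because $\tau<1$, one \emph{does} have the two-sided comparability of both $d$ and $d_F$, and Lemma~\ref{lemmaJost} is applied exactly as you propose. On $E_2$, where only the one-sided bound $\phi(d_F(x))\lesssim\phi(d_F(y))$ is available, the paper instead uses $d(x,y)\ge\tau d(y)$ together with the $n_\mu$-Ahlfors--David regularity of $\mu$ to bound $\frac{d(x,y)^s}{\mu[B(y,d(x,y))]}\lesssim d(y)^{s-n_\mu}$, then factors $(w_\phi^F d\mu)(E_2)=(w_\phi^F d\mu)(E_2)^{1/(1^*_{s-\gamma})'}(w_\phi^F d\mu)(E_2)^{1/1^*_{s-\gamma}}$ and controls the second factor by $\bigl(d(y)^{n_\mu}\phi(d_F(y))\bigr)^{1/1^*_{s-\gamma}}$, which requires only the upper bound on $\phi$. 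The two exponents of $d(y)$ then recombine to give $d(y)^\gamma$. Your argument needs an analogue of this $E_2$ split to be complete; as written, it tacitly assumes $c_3<1$.
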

\begin{proof}

Let us define, for $\lambda>0$, the set $E:=\{x\in \Omega:|u(x)-u_{B_0}|>\lambda\}$ and assume that $\mu$ is $n_\mu$-lower and $n_\mu$-upper Ahlfors-David regular (note that $(s-\gamma)(1^*_{s-\gamma})'=(s-\gamma)\frac{n_\mu}{s-\gamma}=n_\mu$). Then, by Chebyshev's inequality, Lemma \ref{estimacion puntual} and Tonelli's theorem,

\begin{equation}
\begin{aligned}
(w_\phi^F d\mu)(E)&\lesssim \frac{1}{\lambda}\int_E \int_{\Omega\cap\{d(x,y)\leq c_3d(y)\}}\frac{g_1(y) d(x,y)^s}{\mu[B(y,d(x,y))]}d\mu(y) w_\phi^F(x)d\mu(x)\\
&=\frac{1}{\lambda}\int_\Omega g_1(y) \int_{E\cap B(y,c_3 d(y))}\frac{w_\phi^F(x) d(x,y)^s}{\mu[B(y,d(x,y))]}d\mu(x) d\mu(y)\\
&= I_1+ I_2,
\end{aligned}
\end{equation}
where $I_1$ corresponds to the case in which the inner integral  is defined on the region $E_1$ where $d(x,y)\leq \tau d(y)$ and $I_2$ to the case where the inner integral is evaluated on its complement, $E_2$. Observe that when $d(x,y)\leq \tau d(y)$, we have that $(1-\tau) d(y)\leq d(x)\leq (1+\tau)d(y)$ and that the same comparison holds  for $d_F(x)$ and $d_F(y)$, so that, as $\mu$ is $n_\mu$-lower Ahlfors-David regular, by Lemma  \ref{lemmaJost} and the fact that $\frac{n_\mu+s-\gamma}{(s-\gamma)(1^*_{s-\gamma})'}-1=\frac{1}{(1^*_{s-\gamma})'}$,
\[
\begin{split}
I_1 &\lesssim \int_{\Omega}\frac{g_1(y)}{\lambda}\int_{E_1}\frac{d(x,y)^s}{\mu[B(y,d(x,y))]}d\mu(x)w_{\phi}^F(y)d\mu(y)\\
&\lesssim \int_{\Omega}d(y)^\gamma\frac{g_1(y)}{\lambda}\int_{E_1}\frac{d\mu(x)}{d(x,y)^{(s-\gamma)(1^*_{s-\gamma})'-(s-\gamma)}}w_{\phi}^F(y)d\mu(y)\\
&\lesssim  \int_{\Omega}d(y)^\gamma\frac{g_1(y)}{\lambda} \mu(E_1)^{\frac{1}{(1^*_{s-\gamma})'}}w_{\phi}^F(y)d\mu(y)\\
&\lesssim  \int_{\Omega}d(y)^\gamma\frac{g_1(y)}{\lambda} \left(\int_{E_1} w_{\phi}^F(x)d\mu(x)\right)^{\frac{1}{(1^*_{s-\gamma})'}}{\left[w_{\phi}^F(y)\right]}^{\frac{1}{1^*_{s-\gamma}}}d\mu(y)\\
&\lesssim \int_{\Omega}d(y)^\gamma\frac{g_1(y)}{\lambda} (w_{\phi}^Fd\mu)\left({E}\right)^{\frac{1}{(1^*_{s-\gamma})'}}\left[{w_{\phi}^F}(y)\right]^{\frac{1}{1^*_{s-\gamma}}}d\mu(y),
\end{split}
\]
where we have used that, by hypothesis, we know that $\phi((1+\tau)d_F(x))\lesssim \phi(d_F(x))$ and $\phi\left[\frac{(1+\tau)}{1-\tau}d_F(x)\right]\lesssim \phi(d_F(x))$. 

Hence, we have
\[
I_1\lesssim \int_{\Omega}\frac{g_1(y)}{\lambda} (w_{\phi}^Fd\mu)\left({E}\right)^{1/(1^*_{s-\gamma})'} \left[w_{\phi}^F(y)\right]^{1/1^*_{s-\gamma}} d(y)^{\gamma}d\mu(y).
\]

On the other hand, using that $d(x,y)\geq \tau d(y)$, we have that, as $\mu$ is $n_\mu$-upper Ahlfors-David regular and $s\leq n_\mu$,
\[
\begin{split}
I_2&=\frac{1}{\lambda}\int_\Omega g_1(y) \int_{E_2}\frac{w_\phi^F(x) d(x,y)^s}{\mu[B(y,d(x,y))]}d\mu(x) d\mu(y)\\
&\lesssim \frac{1}{\lambda}\int_\Omega g_1(y) d(y)^{s-n_\mu}(w_\phi^F d\mu)(E_2) d\mu(y)\\
&\lesssim \frac{1}{\lambda}\int_\Omega g_1(y) d(y)^{s-n_\mu}(w_\phi^F d\mu)({E_2})^{1/(1^*_{s-\gamma})'}d(y)^{\frac{n_\mu}{1^*_{s-\gamma}}} \left[w_{\phi}^F(y)\right]^{1/1^*_{s-\gamma}} d\mu(y)\\
& \lesssim \frac{1}{\lambda}\int_\Omega g_1(y) d(y)^{\gamma}(w_\phi^F d\mu)({E})^{1/(1^*_{s-\gamma})'}\left[w_{\phi}^F(y)\right]^{1/1^*_{s-\gamma}} d\mu(y).
\end{split}
\]

Thus, we finally get 
\[
(w_\phi^F d\mu)(E)\lesssim\frac{(w_\phi^F d\mu)(E)^{1/(1^*_{s-\gamma})'}}{\lambda}\int_\Omega g_1(y)d(y)^{\gamma} w_{\phi}^F(y)^{1/1^*_{s-\gamma}}  d\mu(y),
\]
i.e. 
\[
\|u-u_{B_0}\|_{L^{1^*_{s-\gamma},\infty}(\Omega,w_\phi^F)}\lesssim \int_\Omega g_1(y)d(y)^{\gamma} w_{{\phi}}^F(y)^{1/1^*_{s-\gamma}}  d\mu(y).
\]
At this point, a ``weak implies strong'' argument, which also holds in our setting (see the comments preceding \cite[Lemma 3.2.]{DD2} and also \cite[Proposition 5]{DV}, \cite[Theorem 4]{Ha2}, \cite{DIV}) gives us the extremal case $p=1$ with weight $w_\phi^F$ at the left-hand side and $v_{\Phi,\gamma}^F$ at the right hand side.

\end{proof}
\begin{obs}
In this case, Ahlfors-David regularity is needed for the argument, so Lemma \ref{lemmaJost} can be applied without any other assumption.
\end{obs}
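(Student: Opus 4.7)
The final statement is a short consistency remark, not a standalone theorem, so the ``proof'' is really a bookkeeping check: I want to verify that in the setting of the $p=1$ case of Theorem \ref{P-S inequality} just handled, the auxiliary hypothesis of Lemma \ref{lemmaJost} (the existence, for any bounded measurable set $F$ of positive measure, of a ball $B(x,R)$ with $\mu(B(x,R))\asymp\mu(F)$) is automatically satisfied, so no hypothesis beyond the Ahlfors-David regularity already imposed has to be added.

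The plan is therefore to trace back the hypotheses actually used in the preceding proof. First, I would observe that in the estimate of $I_1$ we invoked $n_\mu$-lower Ahlfors-David regularity together with Lemma \ref{lemmaJost}, and in the estimate of $I_2$ we invoked $n_\mu$-upper Ahlfors-David regularity; hence the whole argument was carried out assuming $\mu$ is $n_\mu$-Ahlfors-David regular in the two-sided sense \eqref{ahlfors} on $X$.

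Second, I would combine this with the Remark that immediately precedes the statement of Theorem for $p=1$, in which it is verified that if $\mu$ is $n_\mu$-Ahlfors-David regular then for each $x\in X$ and each bounded measurable $F$ with $\mu(F)>0$ the ball $B\!\left(x,\mu(F)^{1/n_\mu}/(2c_u^{1/n_\mu})\right)$ has $\mu$-measure comparable to $\mu(F)$ (the upper bound \eqref{ahlfors} controls this ball's measure by $\mu(F)$ and the lower bound matches it from below up to constants). Consequently the auxiliary ball-comparability condition that appears as a standing assumption in Lemma \ref{lemmaJost} is implied by, rather than added to, the regularity already in force. This is exactly the content of the remark, so nothing further needs to be shown.

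There is no real obstacle here: the only mild point to be careful about is not to confuse Ahlfors-David regularity on a subset $E\subset X$ (as allowed by the definition given in Section \ref{preliminares}) with Ahlfors-David regularity on the whole space, since Lemma \ref{lemmaJost} uses the latter. In our case the $p=1$ theorem explicitly assumes the two-sided bound \eqref{ahlfors} globally, so this distinction is not an issue and the remark follows.
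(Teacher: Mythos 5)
Your reading is correct and matches the paper's intent exactly: the remark is justified by noting that the $p=1$ argument already assumes two-sided $n_\mu$-Ahlfors-David regularity (lower for $I_1$, upper for $I_2$), and the preceding remark shows this regularity yields the comparable-ball condition of Lemma \ref{lemmaJost} via the ball $B\bigl(x,\mu(F)^{1/n_\mu}/(2c_u^{1/n_\mu})\bigr)$, so no additional hypothesis is needed. Nothing further is required.
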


\section{Some particular cases of Theorems \ref{P-S inequality} and \ref{(p,p)}}\label{ejemplos}
In this section we will give several instances of Theorems \ref{P-S inequality} and \ref{(p,p)} by making specific choices of $(X,d,\mu)$ and $\phi$. These particular examples show that  Theorems \ref{P-S inequality} and \ref{(p,p)} extend results in \cite{DD2,HV,LH} in several aspects. 

Let us start with the Euclidean space with the Lebesgue measure, $(\mathbb{R}^n,d,|\cdot|)$, which is a doubling measure space with $n$-Ahlfors-David regularity. If we choose $\phi(t)=t^a$, where $a\geq 0$, $\Omega$ any bounded John domain in $\mathbb{R}^n$ and $F=\partial\Omega$, then we recover the results in  \cite{DD2} about John domains. More precisely, 

\begin{cor}[Theorems 3.1. and 3.2. in \cite{DD2}]\label{Euclidean case1}
Let $\Omega$ be a bounded John domain in $\mathbb{R}^n$. Let $\tau\in(0,1)$ and $a\geq0$. Let $s\in (0,1)$ and take $1\le p<\infty$ such that $ps< n$. Thus, for any $q\leq p^*_{s}=\frac{pn}{n-sp}$, we have that, for any function $u\in W^{s,p}(\Omega,dx)$,
\[
\begin{split}
\inf_{c\in \mathbb{R}}\|u-c&\|_{L^{p^*_{s}}(\Omega,d^a)}\\
&\lesssim\left(\int_\Omega\int_{\{z\in \Omega:d(z,y)\leq \tau d(y)\}}\frac{|u(z)-u(y)|^p}{|z-y|^{n+sp}}\delta(z,y)^{b}dzdy\right)^{\frac{1}{p}},
\end{split}
\]
where $\delta(z,y)=\min_{x\in\{z,y\}}d(x)$ and  $b\leq a\frac{p}{q}+s p$.
\end{cor}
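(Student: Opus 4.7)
The plan is to specialize Theorems~\ref{P-S inequality} and~\ref{(p,p)} to $(X, d, \mu) = (\mathbb{R}^n, |\cdot|, dx)$ with $\phi(t) = t^a$ and $F = \partial\Omega$. The hypotheses are immediate: Lebesgue measure is $n$-Ahlfors--David regular (so $n_\mu = n$), $\phi(t) = t^a$ is positive, increasing, and satisfies $\phi(2t) = 2^a\phi(t)$, and taking $F = \partial\Omega$ makes $w_\phi^F$ coincide with $d^a$, precisely the left-hand side weight. Since $\mu[B(z,|z-y|)] = c_n|z-y|^n$, the denominator of the abstract seminorm reduces to $c_n|z-y|^{n+sp}$, matching the Euclidean right-hand side up to a harmless constant.

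For $q = p$, I would invoke Theorem~\ref{(p,p)}: there $\Phi = \phi$ and the weight $v_{\Phi,sp}^F$ becomes $\delta(z,y)^{sp+a}$, so the resulting exponent is $b = sp + a = ap/q + sp$, saturating the upper bound. For $p < q \le p^*_s$, I would apply Theorem~\ref{P-S inequality} with the choice $\gamma := s - n/p + n/q$; a short verification shows $\gamma \in [0, s)$ and $p^*_{s-\gamma} = q$ under this constraint on $q$, and that $\Phi(t) = t^{ap/q}$. Since $d(z)^{\gamma p}\Phi(d(z))$ is increasing in $d(z)$, the minimum defining $v_{\Phi,\gamma p}^F$ is attained at the point closer to the boundary, so $v_{\Phi,\gamma p}^F(x,y) = \delta(z,y)^{\gamma p + ap/q}$. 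The resulting exponent is $b = \gamma p + ap/q = sp + np/q - n + ap/q$, and $b - (ap/q + sp) = n(p/q - 1)$ is non-positive precisely because $q \ge p$, so $b \le ap/q + sp$ as claimed.

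Finally, since $\Omega$ is bounded, $\delta(z,y) \le \diam(\Omega)$ is uniformly bounded above, so the inequality with the specific exponent $b$ produced above yields the same inequality for every admissible $b' \le b$ up to a multiplicative constant depending on $\diam(\Omega)$. The subcritical range $q < p$ (if one wishes to include it) follows directly from the $q = p$ case by a H\"older inequality on the left-hand side, as $\Omega$ has finite $d^a$-measure. I do not foresee any real obstacle: the argument is essentially a dictionary translation of the abstract weights $(w_\phi^F, v_{\Phi,\gamma p}^F)$ into Euclidean notation together with the exponent bookkeeping above.
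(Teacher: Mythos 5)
Your plan of attack—specializing Theorems~\ref{P-S inequality} and~\ref{(p,p)} to $(\mathbb{R}^n,|\cdot|,dx)$ with $\phi(t)=t^a$ and $F=\partial\Omega$—is precisely what the paper intends, and the dictionary ($n_\mu=n$, $\mu[B(z,|z-y|)]\asymp|z-y|^n$, $\Phi(t)=t^{ap/p^*_{s-\gamma}}$, and the weight collapsing to $\delta^{\gamma p+ap/p^*_{s-\gamma}}$ because the exponent is nonnegative) is correctly set up. The choice $\gamma=s-n/p+n/q$ for $p<q\le p^*_s$ does give $\gamma\in[0,s)$ and $p^*_{s-\gamma}=q$, and the $q=p$ endpoint via Theorem~\ref{(p,p)} is handled cleanly.

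The difficulty is in your final bookkeeping. With your $\gamma$, Theorem~\ref{P-S inequality} produces the left-hand norm in $L^q(\Omega,d^a\,dx)$ (not $L^{p^*_s}$ as in the Corollary's display), and the right-hand exponent is exactly
\[
b_\star:=\gamma p+\frac{ap}{q}=sp+(n+a)\frac{p}{q}-n,
\]
so that $b_\star-\left(\frac{ap}{q}+sp\right)=n\left(\frac{p}{q}-1\right)<0$ strictly for $q>p$. You read this as confirming the Corollary ("so $b\le ap/q+sp$ as claimed''), but that inverts the quantifier: the Corollary asserts the inequality \emph{for every} $b\le \frac{ap}{q}+sp$, whereas your argument establishes it only for $b\le b_\star$. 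Because $\delta\le\diam\Omega$, decreasing $b$ only \emph{weakens} the inequality, so proving it at $b=b_\star$ gives nothing for $b\in(b_\star,\frac{ap}{q}+sp]$ and the range claimed by the Corollary is not covered. In fact a standard scaling test with $u(x)=v((x-x_0)/r)$, $v$ a fixed bump, $d(x_0)\asymp r$, $r\to0$, shows the exponent $b_\star$ is sharp for an $L^q(\Omega,d^a)$ left-hand side (and $ap/p^*_s$ is sharp for an $L^{p^*_s}(\Omega,d^a)$ left-hand side), so the deficit $n(1-p/q)$ cannot be closed by a better argument. Thus your proof correctly reproduces what Theorems~\ref{P-S inequality} and~\ref{(p,p)} actually yield, but it does not establish the Corollary as literally stated (both because of the $L^{p^*_s}$ vs.\ $L^q$ discrepancy and the too-generous range $b\le\frac{ap}{q}+sp$); you should either flag this gap explicitly or state and prove the corrected range $b\le\gamma p+\frac{ap}{q}$ for the $L^q$ version.
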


Moreover, by choosing $F\subsetneq \partial\Omega$ in Theorem \ref{(p,p)} we recover the main result in \cite{LH}, namely
\begin{cor}[Theorem 1.1 in \cite{LH}]\label{Euclidean case2}
Let $\Omega$ in $\mathbb{R}^n$ be a bounded John domain and $1<p<\infty$. Given $F$ a compact set in $\partial\Omega$, and the parameters $s,\tau\in(0,1)$ and $a\geq0$, the inequality 
\[
\begin{split}
\inf_{c\in \mathbb{R}}\|u-c&\|_{L^{p}(\Omega,d_F^a)}\\
&\lesssim\left(\int_\Omega\int_{\{z\in \Omega:d(z,y)\leq \tau d(y)\}}\frac{|u(z)-u(y)|^p}{|z-y|^{n+sp}}\delta^{s p}(z,y)\delta_F(z,y)^{a}dzdy\right)^{\frac{1}{p}},
\end{split}
\]
holds for any function $u\in W^{s,p}(\Omega,dx)$, where $\delta_F(z,y)=\min_{x\in\{z,y\}}d_F(x)$. 
\end{cor}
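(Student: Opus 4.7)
The plan is to derive Corollary~\ref{Euclidean case2} as a direct specialization of Theorem~\ref{(p,p)} to $(X,d,\mu)=(\R^n,|\cdot|,dx)$ with $\phi(t)=t^a$. Lebesgue measure on $\R^n$ is $n$-Ahlfors--David regular, hence doubling with $n_\mu=n$ and $\delta$-reverse doubling with $\delta=n$; since $s\in(0,1)\le n$, the hypothesis $0<s\le \delta$ of Theorem~\ref{(p,p)} is satisfied. The function $\phi(t)=t^a$ with $a\ge 0$ is positive, increasing, and verifies $\phi(2t)=2^a\phi(t)$, so the growth condition holds. Moreover $w_\phi^F(x)=d_F(x)^a$ lies in $L^1_{\loc}(\Omega,dx)$, as $a\ge 0$ makes it bounded on compact subsets of $\Omega$.

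Next I would pin down the weight $v_{\Phi,sp}^F$ supplied by Theorem~\ref{(p,p)}. The ``appropriate power'' $\Phi$ of $\phi$ in the $(p,p)$ case is $\Phi=\phi$ itself: in the duality argument behind Theorem~\ref{P-S inequality} the exponent $p/p^*_{s-\gamma}$ arises from H\"older's inequality combined with the $L^{(p^*_{s-\gamma})'}\!\to L^{p'}$ boundedness of $I_{s-\gamma}^\mu$, while in the $(p,p)$ case Lemma~\ref{lema maximal} reduces everything to the $L^{p'}\!\to L^{p'}$ boundedness of the maximal operator $I_0^\mu$, forcing the exponent $p/p=1$. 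With $\phi(t)=t^a$ this gives
\[
v_{\Phi,sp}^F(x,y)=\min_{z\in\{x,y\}} d(z)^{sp}\, d_F(z)^a.
\]

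The main routine step is to replace this min-weight by the product $\delta^{sp}(x,y)\,\delta_F(x,y)^a$ appearing in the statement. On the region of integration $\{(z,y)\in\Omega\times\Omega : d(z,y)\le\tau d(y)\}$, the $1$-Lipschitz property of $d(\cdot)$ and $d_F(\cdot)$ together with $F\subset\partial\Omega$ (which gives $d(y)\le d_F(y)$) yield
\[
(1-\tau)d(y)\le d(z)\le (1+\tau)d(y),\qquad (1-\tau)d_F(y)\le d_F(z)\le (1+\tau)d_F(y),
\]
so that $\min_{z\in\{x,y\}} d(z)^{sp}d_F(z)^a\asymp d(y)^{sp}d_F(y)^a\asymp \delta^{sp}(x,y)\,\delta_F(x,y)^a$, with constants depending only on $\tau$ and $a$. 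Since $|B(z,d(z,y))|=c_n\,d(z,y)^n$, the kernel in the seminorm of Theorem~\ref{(p,p)} is comparable to $|z-y|^{-n-sp}$, and combining these comparabilities matches the right-hand side of the corollary up to constants. There is no serious obstacle: beyond identifying $\Phi$ in the $(p,p)$ setting, the corollary is a direct translation of Theorem~\ref{(p,p)} into the Euclidean case.
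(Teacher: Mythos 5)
Your proposal is correct and follows exactly the route the paper intends: Corollary~\ref{Euclidean case2} is stated in Section~\ref{ejemplos} as a direct specialization of Theorem~\ref{(p,p)} to $(\R^n,|\cdot|,dx)$ with $\phi(t)=t^a$, and you supply precisely the routine verifications (hypotheses on $\mu$ and $\phi$, the identification $\Phi=\phi$ in the $(p,p)$ case, and the comparability of $\min_x\bigl[d(x)^{sp}d_F(x)^a\bigr]$ with $\delta^{sp}\delta_F^a$ on the region $d(z,y)\le\tau d(y)$, using $d(y)\le d_F(y)$) that the paper leaves implicit. No gaps.
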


If we use Theorem \ref{P-S inequality} for $F\subsetneq \partial\Omega$, then we improve both results by obtaining the following combination of them:

\begin{cor}\label{combination}
Let $\Omega$ be a bounded John domain in $\mathbb{R}^n$ and consider $F\subset\partial \Omega$. Let $\tau\in(0,1)$ and $a\geq0$. Let $s\in (0,1)$ and take $1\le p<\infty$ such that $ps< n$. Thus, for any $q\leq p^*_{s}=\frac{pn}{n-sp}$, we have that, for any function $u\in W^{s,p}(\Omega,dx)$,
\[
\begin{split}
&\inf_{c\in \mathbb{R}}\|u-c\|_{L^{p^*_{s}}(\Omega,d_F^a)}\\
&\lesssim\left(\int_\Omega\int_{\{z\in \Omega:d(z,y)\leq \tau d(y)\}}\frac{|u(z)-u(y)|^p}{|z-y|^{n+sp}}\delta(z,y)^{sp}\delta_F(z,y)^{b}dzdy\right)^{\frac{1}{p}},
\end{split}
\]
where $\delta_F(z,y)=\min_{x\in\{z,y\}}d_F(x)$ and  $b\leq a\frac{p}{q}$.
\end{cor}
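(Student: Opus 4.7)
The plan is to obtain Corollary~\ref{combination} as a specialization of Theorem~\ref{P-S inequality} (and, at the endpoint, Theorem~\ref{(p,p)}) to the Euclidean setting $(\R^n, d, |\cdot|)$ with the power weight $\phi(t) = t^a$. Lebesgue measure on $\R^n$ is $n$-Ahlfors--David regular, so $n_\mu = n$; the function $\phi$ is positive and increasing with $\phi(2t) = 2^a \phi(t)$; and $w_\phi^F(x) = d_F(x)^a$ is locally integrable on $\Omega$ for $a \ge 0$. Hence all hypotheses of the theorems are met, and $\Phi(t) = \phi(t)^{p/p^*_{s-\gamma}} = t^{ap/p^*_{s-\gamma}}$ for admissible $\gamma \in [0, s)$.

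Given the parameter $q \in [p, p^*_s]$ appearing in the statement, I would pick $\gamma := s - \frac{n(q-p)}{qp}$, which places $\gamma$ in $[0, s)$ with $\gamma p \le sp$ and yields $p^*_{s-\gamma} = q$ exactly. With this choice $\Phi(t) = t^{ap/q}$ and $v_{\Phi, \gamma p}^F(x, y) = \min_{z \in \{x, y\}} d(z)^{\gamma p} d_F(z)^{ap/q}$. The key pointwise observation is that on the truncated region $\{d(z, y) \le \tau d(y)\}$ the triangle inequality gives $|d(z) - d(y)| \le \tau d(y)$, hence $d(z) \asymp d(y) \asymp \delta(z, y)$; since $F \subset \partial\Omega$ implies $d_F \ge d$, also $|d_F(z) - d_F(y)| \le \tau d_F(y)$ and $d_F(z) \asymp d_F(y) \asymp \delta_F(z, y)$. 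Therefore $v_{\Phi, \gamma p}^F(x, y) \asymp \delta(z, y)^{\gamma p} \delta_F(z, y)^{ap/q}$ on this region, and, using $|B(z, d(z, y))| = c_n |z - y|^n$ in Euclidean space, Theorem~\ref{P-S inequality} delivers a Poincar\'e-type inequality with this weight on the right-hand side and $L^{q}$-norm with weight $d_F^a$ on the left.

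To recover the stated form, with $\delta(z,y)^{sp}$ in front of $\delta_F^b$ for any $b \le ap/q$, one exploits the boundedness of $\Omega$: since $\delta, \delta_F \le \diam(\Omega)$ and $\gamma p \le sp$, exponents may be redistributed up to a constant depending on $\diam(\Omega)$, and the comparison of the weighted $L^{q}$ and $L^{p^*_s}$ norms is a routine H\"older estimate on the bounded domain. The endpoint $q = p$ corresponds to the limit $\gamma = s$, outside the range of Theorem~\ref{P-S inequality}, and must be treated by Theorem~\ref{(p,p)} instead, with the identical algebra of weights. I expect the main subtlety to lie precisely in this bookkeeping of parameter identifications and the endpoint adjustment between Theorems~\ref{P-S inequality} and~\ref{(p,p)}; once the correspondence between $q$ and $\gamma$, together with the comparabilities on the truncated region, is clarified, the Corollary is a direct substitution.
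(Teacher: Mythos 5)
Your overall approach --- specialize Theorems \ref{P-S inequality} and \ref{(p,p)} to the Euclidean setting with $\phi(t)=t^a$, choose $\gamma$ so that $p^*_{s-\gamma}=q$, and exploit the comparabilities $d(z)\asymp d(y)\asymp\delta(z,y)$ and $d_F(z)\asymp d_F(y)\asymp\delta_F(z,y)$ on the truncated region --- is the right framework and corresponds to how the paper derives its corollaries. For $q=p^*_{s-\gamma}$ this specialization yields
\[
\inf_{c}\|u-c\|_{L^q(\Omega,d_F^a)}\lesssim\Big(\int_\Omega\int_{\{d(z,y)\leq\tau d(y)\}}\frac{|u(z)-u(y)|^p\,\delta(z,y)^{\gamma p}\,\delta_F(z,y)^{ap/q}}{|z-y|^{n+sp}}\,dz\,dy\Big)^{1/p}.
\]
The gap lies in your last step, and it is not a matter of bookkeeping: both passages you invoke run in the wrong direction. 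On a bounded domain with the finite measure $d_F^a\,dx$, H\"older gives $\|f\|_{L^q}\lesssim\|f\|_{L^{p^*_s}}$ for $q\leq p^*_s$, not the reverse, so it cannot promote the left-hand side from $L^q$ up to $L^{p^*_s}$. Likewise, since $\gamma p\leq sp$ and $\delta\leq\diam\Omega$, one has $\delta^{sp}\lesssim\delta^{\gamma p}$, so replacing $\delta^{\gamma p}$ by $\delta^{sp}$ makes the right-hand side \emph{smaller}; the boundedness of $\Omega$ works against you here as well. Either replacement would turn the derived inequality into a strictly stronger statement, which cannot be achieved by ``redistributing exponents up to a constant''. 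Note also that the only choice of $\gamma$ producing the $L^{p^*_s}$ norm on the left is $\gamma=0$, and there the $\delta^{sp}$ factor is absent from the right-hand side and the exponent on $\delta_F$ is exactly $ap/p^*_s\leq ap/q$, so the claimed upper range of $b$ is not reached either. A genuinely new argument --- or a re-examination of the parameter placement in the statement --- is needed to close this discrepancy; the steps you label as routine do not.
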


In general, we are able to include in our inequalities a large class of weights defined by using the distance from the boundary. An instance of weights which is not included in the previous results is, for example, the family of weights $w_\phi^F$, where $\phi(t)=t^a\log^b(e+ t)$, $a,b\geq 0$.

Another space on which our results can be applied is the Heisenberg group $\mathbb{H}^n$, as it is a  $(2n+2)$-Ahlfors-David regular metric measure space endowed with the Carnot-Carath\'eodory metric $d:=d_{CC}$ (see \cite{S2} for the definition of $d_{CC}$) and the Lebesgue measure. The result reads as follows:

\begin{cor}\label{Heisenberg case}
Let us consider the Heisenberg space $(\mathbb{H}^n,d_{CC},\mathcal{H}^{2n+2})$, where $d_{CC}$ is the Carnot-Carath\'eodory metric and $\mathcal{H}^{2n+2}$ is the $(2n+2)$-dimensional Hausdorff measure on $\mathbb{H}^n$. Let $\Omega$ be a bounded John domain in $\mathbb{H}^n$ and $F\subset \partial\Omega$ a compact set. Let $s,\tau\in(0,1)$ and $a\geq0$ such that $d_F^a\in L^1_{\loc}(\Omega)$. Let $0<\gamma \leq s$  and take $p>1$ such that $(s-\gamma)p< 2n+2$. Thus, for any $q\leq p^*_{s-\gamma}=\frac{p(2n+2)}{(2n+2)-(s-\gamma)p}$, we have that, for any function $u\in W^{s,p}(\Omega,\mathcal{H}^{2n+2})$,
\[
\begin{split}
&\inf_{c\in \mathbb{R}}\|u-c\|_{L^{p^*_{s-\gamma}}(\Omega,d_F^a\mathcal{H}^{2n+2})}^p\\
&\lesssim\int_\Omega\int_{\Omega\cap B(y,\tau d(y))}\frac{|u(z)-u(y)|^p\delta_F^{\gamma p}(z,y)\delta(z,y)^{a \frac{p}{q}}d\mathcal{H}^{2n+2}(z)d\mathcal{H}^{2n+2}(y)}{{(|z_1-y_1|^4+|z_2-y_2+2\Im \langle z_1,y_1\rangle|^2)^{\frac{2n+2+sp}{4}}}},
\end{split}
\]
where $\delta_F(z,y)=\min_{(x)\in\{z,y\}}{d}_F(x)$, with $d_F$ the Carnot-Carath\'eodory distance from $x$ to $\partial\Omega$. 
If we choose $F=\partial\Omega$, then we can obtain the inequality with the power $d^b$ at the right hand side for the range $b\leq a\frac{p}{q}+\gamma p$.
\end{cor}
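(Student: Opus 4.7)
The plan is to obtain this corollary as a direct specialization of Theorem \ref{P-S inequality} (with Theorem \ref{(p,p)} handling the endpoint case $\gamma = s$, where $p^*_{s-\gamma}$ reads as $p$): one checks the hypotheses in the Heisenberg setting and then unfolds the abstract kernel and weight of the seminorm into the explicit form written in the statement.

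First I would verify that $(\mathbb{H}^n, d_{CC}, \mathcal{H}^{2n+2})$ is a $(2n+2)$-Ahlfors-David regular metric measure space. This is the classical fact that $\mathcal{H}^{2n+2}[B_{CC}(x,r)] \asymp r^{2n+2}$ uniformly in $x \in \mathbb{H}^n$ and $r>0$, where $Q = 2n+2$ is the homogeneous dimension. Then I would choose $\phi(t) = t^a$: this is positive and increasing (as $a \geq 0$) and satisfies $\phi(2t) = 2^a \phi(t)$, and $w_\phi^F = d_F^a \in L^1_{\loc}(\Omega)$ by hypothesis. With this choice $\Phi(t) = t^{ap/p^*_{s-\gamma}}$, and Theorem \ref{P-S inequality} applied with $n_\mu = 2n+2$ yields
\[
\inf_{c \in \mathbb{R}} \|u-c\|_{L^{p^*_{s-\gamma}}(\Omega, d_F^a \, d\mathcal{H}^{2n+2})} \lesssim [u]_{W^{s,p}_\tau(\Omega, v^F_{\Phi, \gamma p} \, d\mathcal{H}^{2n+2})}.
\]

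Next I would unfold the right-hand side. By $(2n+2)$-Ahlfors-David regularity, the denominator $\mathcal{H}^{2n+2}[B_{CC}(z, d_{CC}(z,y))] \cdot d_{CC}(z,y)^{sp}$ is comparable to $d_{CC}(z,y)^{2n+2+sp}$. The classical equivalence between the Carnot-Carath\'eodory metric and the Kor\'anyi gauge on $\mathbb{H}^n$ (see \cite{S2}),
\[
d_{CC}(z,y) \asymp \left(|z_1-y_1|^4 + |z_2 - y_2 + 2\Im \langle z_1, y_1 \rangle|^2\right)^{1/4},
\]
then produces the explicit denominator appearing in the statement. The weight $v^F_{\Phi, \gamma p}(z,y) = \min_{x \in \{z,y\}} d(x)^{\gamma p} d_F(x)^{ap/p^*_{s-\gamma}}$ can be analyzed on the integration region $d_{CC}(z,y) \leq \tau d(y)$: there $d(x) \asymp d(y)$ and $d_F(x) \asymp d_F(y)$ for $x \in \{z, y\}$ (by the triangle inequality and $F \subset \partial\Omega$), so it is comparable to $\delta(z,y)^{\gamma p} \delta_F(z,y)^{ap/p^*_{s-\gamma}}$.

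Finally, the parameterization by $q \leq p^*_{s-\gamma}$ and the translation to the stated form $\delta_F^{\gamma p}\delta^{ap/q}$ follow from the boundedness of $\Omega$, which permits absorbing suitable powers of $\diam(\Omega)$ as in the analogous Euclidean arguments of \cite{DD2}. The concluding sentence of the corollary is immediate: for $F = \partial\Omega$ one has $\delta_F = \delta$, and the two factors collapse to a single $\delta^{\gamma p + ap/q}$, giving $b \leq \gamma p + ap/q$. The main obstacle is largely notational, namely matching the theorem's ``single minimum of a product'' weight with the corollary's ``product of minima''; the region-localization step above handles this routinely.
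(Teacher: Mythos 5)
Your overall plan --- specializing Theorem~\ref{P-S inequality} to $(\mathbb{H}^n,d_{CC},\mathcal{H}^{2n+2})$ with $\phi(t)=t^a$, using the $(2n+2)$-Ahlfors--David regularity and the Kor\'anyi-gauge equivalence to unfold the kernel, and then noting that on the localized region the minimum of a product and the product of minima agree up to constants --- is exactly how the paper intends the corollary to be read, and the paper offers no independent argument to compare against. That part of your write-up is fine.

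The gap is in your last step. Theorem~\ref{P-S inequality} produces, after localization, a right-hand weight comparable to $\delta(z,y)^{\gamma p}\,\delta_F(z,y)^{ap/p^*_{s-\gamma}}$, whereas the corollary prints $\delta_F(z,y)^{\gamma p}\,\delta(z,y)^{ap/q}$; you assert that one passes from the former to the latter ``by absorbing suitable powers of $\diam(\Omega)$,'' but this is not a boundedness issue at all. Two separate things go wrong. First, the subscripts $F$ are swapped: since $\delta_F\le\delta$, the ratio of the two weights is $(\delta_F/\delta)^{\gamma p-ap/p^*_{s-\gamma}}$, whose direction depends on the sign of $\gamma p - ap/p^*_{s-\gamma}$, so neither weight dominates the other uniformly and no constant depending on $\diam\Omega$ can effect the swap. (By comparison with Corollaries~\ref{Euclidean case2} and~\ref{combination}, where the $s$- or $\gamma$-type power consistently sits on $\delta$ and the $a$-type power on $\delta_F$, the printed placement in Corollary~\ref{Heisenberg case} is almost surely a misprint, which you should flag rather than attempt to justify.) Second, replacing $ap/p^*_{s-\gamma}$ by $ap/q$ with $q<p^*_{s-\gamma}$ \emph{increases} the exponent, which makes the weight \emph{smaller} where $\delta_F$ is small and therefore \emph{strengthens} the estimate; absorbing a power of $\diam\Omega$ runs in the wrong direction. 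The $q$-parameterization has to come from re-applying Theorem~\ref{P-S inequality} with a larger $\gamma'\ge\gamma$ chosen so that $p^*_{s-\gamma'}=q$ (which also explains why, in the companion results of \cite{DD2}, the left-hand norm is $L^q$), not from a crude bound on $\diam\Omega$. Your proposal needs either to correct the statement to the form $\delta^{\gamma p}\delta_F^{ap/p^*_{s-\gamma}}$ that the theorem actually yields, or to carry out that re-parameterization explicitly.
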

In general, for a given metric space $X$, our result can be applied for any metric space endowed with the $\alpha$-dimensional Hausdorff measure, as every $\alpha$-Ahlfors-David regular measure is comparable to the $\alpha$-dimensional Hausdorff measure. Also note that our results allow to obtain improved fractional Poincar\'e-Sobolev inequalities for any ball in a metric measure space satisfying our conditions when the metric of the space is the Carnot-Carath\'eodory metric, as balls in these spaces are Boman chain domains.

\section{Sufficient conditions for a bounded domain} \label{suficiencia del dominio}
In this section we extend Theorem 3.1 in \cite{HV} to the more general context of doubling metric measure spaces, improving it by including weights. As an example, we obtain sufficient conditions for a domain in $(X,d,\mu)$ to support the classical improved $(q,p)$-Poincar\'e inequality.  We refer the reader to Remark \ref{dilatadas y cadenas} for the basic definitions concerning chains of balls of a Whitney decomposition of a domain $\Omega$.

First of all, we  prove an unweighted fractional $(q,p)$-Poincar\'e inequality on balls. This lemma was first proved in the Euclidean case in \cite[Lemma 2.2]{HV}.

\begin{lem}\label{lema2.2}
Let $B$ be a ball in a metric space $(X,d,\mu)$ with a doubling measure $\mu$. Let $1\leq q\leq p<\infty$ and let $s,\rho\in(0,1)$. Then, \begin{equation}
\begin{aligned}
&\int_B|u(y)-u_{B}|^qd\mu(y)\\
&\lesssim\frac{r(B)^{s q}}{\mu(B)^{\frac{q-p}{p}}}\left(\int_{B^*}\int_{\{z\in B^*:d(z,y)\leq \rho r(B)\}}\frac{|u(y)-u(z)|^pd\mu(z)d\mu(y)}{\mu[B(z,d(z,y))]d(z,y)^{s p}} \right)^{q/p},
\end{aligned}\end{equation}
for any $u\in L^p_\mu(B)$, where $B^*$ is defined as in Remark \ref{dilatadas y cadenas}.
\end{lem}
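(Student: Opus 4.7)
The plan is to reduce the lemma to a local fractional $(p,p)$-Poincar\'e inequality on small sub-balls of $B$ where the distance restriction is automatic, and then to re-assemble it via a covering and a chain argument. As a first step, H\"older's inequality gives
\[
\int_B |u-u_B|^q \, d\mu \leq \mu(B)^{1-q/p} \Bigl(\int_B |u-u_B|^p\, d\mu\Bigr)^{q/p},
\]
so it suffices to establish the endpoint case
\[
\int_B |u-u_B|^p\, d\mu \lesssim r(B)^{sp} \int_{B^*}\int_{\{z\in B^*:\, d(z,y)\leq \rho r(B)\}} \frac{|u(y)-u(z)|^p\, d\mu(z)\, d\mu(y)}{\mu[B(z,d(z,y))]\, d(z,y)^{sp}},
\]
from which the full $(q,p)$ statement follows with the claimed factor $\mu(B)^{(q-p)/p}$.

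Using the (geometric) doubling of $(X,d)$, cover $B$ by a finite, bounded-overlap family $\{B_j\}_{j\in J}$ of balls centered in $B$ with common radius $r(B_j)\asymp \rho\, r(B)$, chosen small enough that $B_j\subset B^*$ for all $j$ and that any two points of $B_j$ lie within distance $\rho r(B)$. On each $B_j$, Jensen's inequality combined with the doubling bound $\mu[B(z,d(z,y))]\lesssim \mu(B_j)$ (which holds for $z,y\in B_j$ since $B(z,d(z,y))\subset 3B_j$) and the trivial estimate $d(z,y)\leq 2 r(B_j)$ yields the local inequality
\[
\int_{B_j} |u-u_{B_j}|^p\, d\mu \lesssim r(B)^{sp} \int_{B_j}\int_{B_j} \frac{|u(y)-u(z)|^p\, d\mu(z)\, d\mu(y)}{\mu[B(z,d(z,y))]\, d(z,y)^{sp}}.
\]

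To pass from the local averages $u_{B_j}$ to $u_B$, pick a distinguished ball $B'\in\{B_j\}$ concentric with $B$. Since $\|u-u_B\|_{L^p(B)}\leq 2\inf_{a\in\R}\|u-a\|_{L^p(B)}\leq 2\|u-u_{B'}\|_{L^p(B)}$, it is enough to bound $\int_B |u-u_{B'}|^p\, d\mu$. For each $y\in B$ lying in some $B_{j(y)}$, write $|u(y)-u_{B'}|\leq |u(y)-u_{B_{j(y)}}| + |u_{B_{j(y)}}-u_{B'}|$. Integrating the first piece over $B$ and summing over $j$, using the local estimate together with the bounded overlap of $\{B_j\}$, already yields the desired right-hand side. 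For the second piece, I would connect each $B_j$ to $B'$ by a chain of balls from the covering of uniformly bounded length (depending only on $\rho$, since both endpoints lie in $B$ and have radii of the same scale), with successive members having mutual $\mu$-overlap comparable to their own measures, so that the triangle inequality along the chain reduces $|u_{B_j}-u_{B'}|^p$ to a sum of local Poincar\'e contributions on chain balls. The main obstacle is this chain step: one must simultaneously ensure that the chains have uniformly bounded length, that consecutive chain balls have substantial $\mu$-overlap so that their averages can be compared, and that each ball of the covering appears in at most boundedly many chains, so that the final summation does not inflate the constant. These are standard Whitney-type considerations in a doubling metric space (cf.\ Lemma \ref{Whitney} and Remark \ref{dilatadas y cadenas}), but the combinatorial bookkeeping is the delicate point.
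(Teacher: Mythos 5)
Your proposal is correct and follows essentially the same route as the paper's proof: cover $B$ by boundedly many balls of radius $\asymp \rho r(B)$, derive a local fractional Poincar\'e estimate on each via Jensen and doubling, and glue the local averages together with a bounded-length chain argument using overlapping consecutive balls. The one genuine streamlining in your version is the up-front H\"older reduction $\int_B|u-u_B|^q \le \mu(B)^{1-q/p}(\int_B|u-u_B|^p)^{q/p}$, which lets you work with the endpoint $q=p$ throughout instead of carrying the $q/p$ exponent through the covering and chain steps as the paper does; the chain step you flag as delicate is indeed handled in the paper by working with unions $R$ of two overlapping covering balls (so that the comparison $|u_{B_j}-u_R|$ only needs $\mu(B_j)\gtrsim\mu(R)$ from doubling), which is one concrete way to discharge the bookkeeping you describe.
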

\begin{proof}
Let us consider a covering $\mathcal{B}=\{B_i\}_{i\in J}$ of $B$ by $J$ balls of radious $\frac{\rho}{K}r(B)$ for some $K>1$. This can be done in such a way that, when $R$ is the union of two balls $B_i$ and $B_j$ with overlapping dilations  (i.e. with  $\lambda B_i\cap \lambda B_j\neq \emptyset$ for some $\lambda>1$ sufficiently small), $R\subset B(y,\rho r(B))$ for every $y\in R$. Also, such an $R$ satisfies $R\subset B^*$ and $\mu[B(z,d(z,y))]\lesssim\mu(R)$. Observe that the index set is uniformly finite for every ball $B$, as $X$ is a doubling metric space.

Once we have this construction, observe that, for the union $R$ of two balls in $\mathcal{B}$ with overlapping dilations, we have, by the doubling condition
\[
\begin{split}
\frac{1}{\mu(R)}\int_R&|u(y)-u_{R}|^qd\mu(y)\leq \left(\frac{1}{\mu(R)}\int_R|u(y)-u_{R}|^pd\mu(y)\right)^{q/p}\\
&\leq \left(\frac{1}{\mu(R)}\int_R\frac{1}{\mu(R)}\int_R|u(y)-u(z)|^pd\mu(z)d\mu(y)\right)^{q/p}
\\
&\lesssim \left(\frac{1}{\mu(R)}\int_R\int_R\frac{|u(y)-u(z)|^p\diam(R)^{s p}d\mu(z)d\mu(y)}{\mu[B(z,d(z,y))]d(z,y)^{s p}}\right)^{q/p}\\
&\lesssim \frac{r(B)^{s q}}{\mu(B)^{q/p}}\left(\int_{B^*}\int_{B^*\cap B(y,\rho r(B))}\frac{|u(y)-u(z)|^p d\mu(z)d\mu(y)}{\mu[B(z,d(z,y))]d(z,y)^{s p}}\right)^{q/p}.
\end{split}
\]

With this in mind, observe that, by H\"older's and Minkowski's,
\[
\begin{split}
\frac{1}{\mu(B)}\int_B|u(y)-u_{B}|^qd\mu(y)&\lesssim \frac{1}{\mu(B)}\int_B |u(y)-u_{B_1}|^qd\mu(y)\\
&\lesssim \sum_{j\in J}\frac{1}{\mu(B_j)}\int_{B_j}|u(y)-u_{B_j}|^qd\mu(y)\\
&\qquad+ \sum_{j\in J}\frac{1}{\mu(B_j)}\int_{B_j}|u_{B_j}-u_{B_1}|^qd\mu(y).
\end{split}
\]

The first sum is bounded by the quantity above, so it is enough to estimate the second sum. In order to do this, let us fix $B_j$, $j\in J$ and let $\sigma:\{1,2,\ldots,l\}\to J$, $l\leq \# J$ an injective map such that $\sigma(1)=1$ and $\sigma (l)=j$, and the subsequent balls $B_{\sigma(i)}$ and $B_{\sigma(i+1)}$ have overlapping dilations. Since $l\leq \# J$, we obtain
\[
\begin{split}
|u_{B_j}-u_{B_1}|^q&\leq \left(\sum_{i=1}^{l-1}|u_{B_{\sigma(i+1)}}-u_{B_{\sigma(i)}}|\right)^{q}\\
&\lesssim\sum_{i=1}^{l-1}|u_{B_{\sigma(i+1)}}-u_{B_{\sigma(i+1)}\cup B_{\sigma(i)}}|^q\\
&\qquad\qquad+ \sum_{i=1}^{l-1}|u_{B_{\sigma(i+1)}\cup B_{\sigma(i)}}-u_{B_{\sigma(i)}}|^q.
\end{split}
\]
The two sums above can be bounded in the same way, so we will just work with the first one. For each term we have
\[
\begin{split}
|u_{B_{\sigma(i+1)}}-&u_{B_{\sigma(i+1)}\cup B_{\sigma(i)}}|^q\\
&= \frac{1}{\mu(B_{\sigma(i+1)})}\int_{B_{\sigma(i+1)}}|u_{B_{\sigma(i+1)}}-u+u-u_{B_{\sigma(i+1)}\cup B_{\sigma(i)}}|^qd\mu\\
&\lesssim \frac{1}{\mu(B_{\sigma(i+1)})}\int_{B_{\sigma(i+1)}}|u-u_{B_{\sigma(i+1)}}|^qd\mu\\
&\qquad \qquad +\frac{1}{\mu(B_{\sigma(i+1)}\cup B_{\sigma(i)})}\int_{B_{\sigma(i+1)}\cup B_{\sigma(i)}}|u-u_{B_{\sigma(i+1)}\cup B_{\sigma(i)}}|^qd\mu,
\end{split}
\]
where we have used the conditions on the union of two balls of the covering with nonempty intersection and the doubling condition.
In the last two integrals we can apply the first estimate above to obtain the desired result.
\end{proof}

With this lemma at hand, we can proceed to the proof of Theorem \ref{sufi},  which gives sufficient conditions on a domain of a doubling metric space to support $(w_\phi,v_{\Phi,\gamma p})$-improved fractional $(q,p)$-Poincar\'e inequalities for $q\leq p$ and suitable functions $\phi$ and $\Phi$.

\begin{thm}\label{sufi}
	Let $\Omega$ be a domain in a doubling metric space $(X,d)$ with a Whitney decomposition $W_M$ as the one built in Lemma \ref{Whitney} and with the properties in Remark \ref{dilatadas y cadenas}. Let $\phi$ be a positive increasing function satisfying the growth condition $\phi(2x)\le C \phi(x)$.  Let $1\leq q\leq p<\infty$, and let $s,\tau\in(0,1)$ and $0\leq \gamma\leq s$.
	\begin{enumerate}
		\item If there exists a chain decomposition of $\Omega$ such that
		\begin{equation}
		\sum_{E\in W_M}\left(\sum_{B\in E(W_M)}r(E)^{(s-\gamma) q}\frac{\phi(r(B))}{\phi(r(E))}\ell[\mathcal{C}(B^*)]^{q-1}\frac{\mu(B)}{\mu(E)^{q/p}}\right)^{p/(p-q)}<\infty,
		\end{equation}
		then $\Omega$ supports the $(w_\phi,v_{\Phi,\gamma p})$-improved fractional $(q,p)$-Poincar\'e inequality
		\[
		\inf_{a\in \mathbb{R}}\|u-a\|_{L^{q}(\Omega,w_{\phi}d\mu)}\lesssim [u]_{W_{\tau}^{s,p}\left(\Omega,v_{\Phi,\gamma p}d\mu \right)},
		\]
		where $\Phi(t)=\phi^{\frac{p}{q}}(t)$.
		\item If $q=p$, and if there exists a chain decomposition of $\Omega$ such that 
		\begin{equation}\label{suficp}
		\sup_{E\in W_M}\sum_{B\in E(W_M)}r(E)^{(s-\gamma) p}\frac{\phi(r(B))}{\phi(r(E))}\ell[\mathcal{C}(B^*)]^{p-1}\frac{\mu(B)}{\mu(E)}<\infty,
		\end{equation}
		then $\Omega$ supports the $(w_\phi,v_{\phi,\gamma p})$-improved fractional $(p,p)$-Poincar\'e inequality
		\[
		\inf_{a\in \mathbb{R}}\|u-a\|_{L^{p}(\Omega,w_{\phi}d\mu)}\lesssim [u]_{W_{\tau}^{s,p}\left(\Omega,v_{\phi,\gamma p}d\mu \right)}.
		\]

	\end{enumerate}
\end{thm}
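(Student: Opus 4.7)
The strategy is a weighted adaptation to the metric setting of the Whitney-chain argument in \cite[Theorem 3.1]{HV}: telescope $|u-u_{B_0}|^q$ along the Whitney chains, apply the local fractional Poincar\'e inequality of Lemma \ref{lema2.2} on each chain ball, exchange the resulting double sum through the shadow $E(W_M)$, and finally use H\"older's inequality with exponents $p/q$ and $p/(p-q)$ (degenerating to a supremum when $q=p$), which produces exactly the sum in the hypothesis.

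Fix a central Whitney ball $B_0\in W_M$ and, for each $B\in W_M$, use the chain $(B_0^*,B_1^*,\dots,B_k^*)$ from Remark \ref{dilatadas y cadenas}, with $k=\ell[\mathcal{C}(B^*)]$. Combining \eqref{chainprop} (to replace consecutive averages by a common denominator), Jensen's inequality to go from $L^1$ to $L^p$ averages, and the elementary bounds $(a+b)^q\le 2^{q-1}(a^q+b^q)$ and $\bigl(\sum_{i=1}^k c_i\bigr)^q\le k^{q-1}\sum c_i^q$, I obtain for every $x\in B$
\[
|u(x)-u_{B_0}|^q \lesssim |u(x)-u_{B}|^q + \ell[\mathcal{C}(B^*)]^{q-1}\sum_{E:\,E^*\in\mathcal{C}(B^*)} a_E^q,
\]
where $a_E:=\bigl(\mu(E^*)^{-1}\int_{E^*}|u-u_{E^*}|^{p}\,d\mu\bigr)^{1/p}$. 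Integrating against $w_\phi\,d\mu$ on $B$ (using $w_\phi\asymp\phi(r(B))$ on $B$ by the Whitney property), summing over $B$, and exchanging the double sum via $\{(B,E):E^*\in\mathcal{C}(B^*)\}=\{(B,E):B\in E(W_M)\}$, the principal contribution reduces to
\[
\sum_{E\in W_M} a_E^q\,A_E,\qquad A_E:=\sum_{B\in E(W_M)}\phi(r(B))\,\mu(B)\,\ell[\mathcal{C}(B^*)]^{q-1},
\]
while Lemma \ref{lema2.2} (applied to $E^*$ with its dilation parameter $\rho$ chosen small enough to force $\{d(z,y)\le\rho r(E)\}\subset\{d(z,y)\le\tau d(y)\}$) provides $a_E^p\lesssim r(E)^{sp}\mu(E^*)^{-1}T_E$, where $T_E$ denotes the unweighted seminorm integral over $E^{**}$.

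In case (1), $q<p$, I plug this bound into $\sum_E a_E^q A_E$, rewrite each term as $T_E^{q/p}\cdot r(E)^{sq}\mu(E^*)^{-q/p}A_E$, and apply H\"older with exponents $p/q$ and $p/(p-q)$ using the weight factor $X_E:=r(E)^{\gamma p}\phi(r(E))^{p/q}$; the bounded overlap of the $E^{**}$ together with the comparability $v_{\Phi,\gamma p}(z,y)\asymp X_E$ for $z,y\in E^{**}$ yields $\sum_E T_E\, X_E\lesssim [u]_{W_\tau^{s,p}(\Omega,v_{\Phi,\gamma p}d\mu)}^p$, and a direct algebraic rearrangement, aided by the growth condition on $\phi$ to replace $\phi(r(E^*))$ by $\phi(r(E))$, collapses the second H\"older factor to exactly the sum in the hypothesis raised to $(p-q)/p$. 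In case (2), $q=p$, the H\"older step degenerates to pulling out the supremum \eqref{suficp} from the inner parenthesis, and the remaining sum $\sum_E T_E\,r(E)^{\gamma p}\phi(r(E))$ is bounded by $[u]^p$ as before; the residual local contributions $\sum_B\int_B|u-u_B|^q w_\phi\,d\mu$ fit into the same framework (they are the $E=B$ terms). The main technical obstacle will be the exponent bookkeeping in the H\"older step and the careful handling of the various dilations of chain balls, so that bounded overlap is preserved and the integration region produced by Lemma \ref{lema2.2} remains contained in the support $\{d(z,y)\le\tau d(y)\}$ of the weighted seminorm on the right-hand side.
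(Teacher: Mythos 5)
Your proof is correct and follows essentially the same Whitney-chain strategy as the paper: decompose via Whitney balls, telescope along chains, apply the local fractional Poincar\'e inequality (Lemma \ref{lema2.2}), rearrange the double sum through the shadows $E(W_M)$, and finish with H\"older using the exponents $p/q$ and $p/(p-q)$ (a supremum when $q=p$). The only implementation differences are cosmetic---you pass from $L^1$ to $L^p$ averages and then use Lemma \ref{lema2.2} with equal exponents before raising to the power $q/p$, where the paper applies it directly with exponents $(q,p)$; and you absorb the diagonal term $\sum_B\int_{B^*}|u-u_{B^*}|^q w_\phi\,d\mu$ into the chain sum as the $B=E$ contributions, whereas the paper bounds it by a separate H\"older step giving the constant $\mu(\Omega)^{(p-q)/p}\diam(\Omega)^{(s-\gamma)q}$.
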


\begin{proof}
We just prove the first statement, as the second one follows in the same way. We can use H\"older's, Minkowski's and the Whitney decomposition of $\Omega$ to obtain
\begin{equation}\label{th31}
\begin{aligned}
 \int_{\Omega}|u(x)-u_{B_0^*}|^qw_\phi(x)d\mu(x)
&\lesssim \sum_{B\in W_M}\int_{B^*}|u(x)-u_{B_0^*}|^qw_\phi(x)d\mu(x)\\
&\lesssim \sum_{B\in W_M}\int_{B^*}|u(x)-u_{B^*}|^qw_\phi(x)d\mu(x)\\
&\qquad\qquad+\sum_{B\in W_M}\int_{B^*}|u_{B^*}-u_{B_0^*}|^qw_\phi(x)d\mu(x).
\end{aligned}\end{equation}

We begin by estimating the first sum. Using Lemma \ref{lema2.2} with $\rho=C_M\tau$ (where $C_M<1$ is such that $\rho r(B^*)\leq \tau d(x)$ for any $x\in B^{**}$) and the fact that $d(x)\asymp r(B^*)$ for any $x\in B^*$ (and the corresponding fact for $B^{**})$, then, taking into account the choice of $M$ and the growth condition on $\phi$, we obtain
\[
\begin{split}
&\int_{B^*}|u(x)-u_{B^*}|^qw_\phi(x) d\mu(x)\lesssim \frac{r(B)^{(s-\gamma) q}}{\mu(B^*)^{q/p-1}}[u]_{W_{\tau}^{s,p}(B^{**},w_{\Phi,\gamma p})}^q.
\end{split}
\]
Thus,
\[
\begin{split}
\sum_{B\in W_M} \int_{B^*}|u(x)&-u_{B^*}|^qw_\phi(x)d\mu(x)
\lesssim \sum_{B\in W_M} \frac{r(B)^{(s-\gamma) q}}{\mu(B^*)^{q/p-1}}[u]_{W_{\tau}^{s,p}(B^{**},w_{\Phi,\gamma p})}^q\\
&\leq
\left(\sum_{B\in W_M} \mu(B^*)\right)^{\frac{p-q}{p}}\left( \sum_{B\in W_M}r(B)^{(s-\gamma) p}[u]_{W_{\tau}^{s,p}(B^{**},w_{\Phi,\gamma p})}^p\right)^{q/p}\\
&\lesssim \mu(\Omega)^{\frac{p-q}{p}}\left( \sum_{B\in W_M}r(B)^{(s-\gamma)p}[u]_{W_{\tau}^{s,p}(B^{**},w_{\Phi,\gamma p})}^p\right)^{q/p}\\
&\lesssim\mu(\Omega)^{\frac{p-q}{p}}\diam(\Omega)^{(s-\gamma) q}[u]_{W_{\tau}^{s,p}(\Omega,v_{\Phi,\gamma p})}^{q},
\end{split}
\]
where we have used that $\{B^*\}_{B\in W_M}$ and $\{B^{**}\}_{B\in W_M}$ are families with uniformly bounded overlapping contained in $\Omega$ and also that, in the domain of integration, the distance from each variable to the boundary of $\Omega$ is comparable to the other.

Next, we estimate the second sum in \eqref{th31}. By using chains of the decomposition and again that  $d(x)\asymp r(B^*)$, $x\in B^*$,
\[
\begin{split}
\sum_{B\in W_M}\int_{B^*}|u_{B^*}-u_{B_0^*}|^q&w_\phi(x)d\mu(x)\lesssim \sum_{B\in W_M}\mu(B)\phi(r(B))\left(\sum_{j=1}^k|u_{B_j^*}-u_{B_{j-1}^*}|\right)^q\\
&\leq \sum_{B\in W_M}\ell[\mathcal{C}(B^*)]^{q-1}\mu(B)\phi(r(B))\left(\sum_{j=1}^k|u_{B_j^*}-u_{B_{j-1}^*}|^q\right).
\end{split}
\]
Since $\max\{\mu(B_j^*),\mu(B_{j-1}^*)\}\lesssim \mu(B_j^*\cap B_{j-1}^*)$ (see \eqref{chainprop}), we can write, by using H\"older's inequality,
\[
\begin{split}
|u_{B_j^*}-u_{B_{j-1}^*}|^q&\lesssim \sum_{i=j-1}^j\left(\mu(B_i^*)^{-1}\int_{B_i^*}|u(x)-u_{B_i^*}|d\mu(x)\right)^q\\
&\leq \sum_{i=j-1}^j\mu(B_i^*)^{-1}\int_{B_i^*}|u(x)-u_{B_i^*}|^qd\mu(x), 
\end{split}
\]
where, for the first inequality, we have used that
\[
\begin{split}
|u_{B_j^*}-u_{B_{j-1}^*}|^q=\left|\frac{1}{\mu(B_j^*\cap B_{j-1}^*)}\int_{B_j^*\cap B_{j-1}^*}(u_{B_j^*}-u(x)+u(x)-u_{B_{j-1}^*})d\mu(x)\right|^q\\
\lesssim \sum_{i=j-1}^j\frac{1}{\mu(B_j^*\cap B_{j-1}^*)}\int_{B_j^*\cap B_{j-1}^*}|u_{B_i^*}-u(x)|^q.
\end{split}
\]

A new application of Lemma \ref{lema2.2} gives
\[
\begin{split}
|u_{B_j^*}-u_{B_{j-1}^*}|^q \lesssim \sum_{i=j-1}^j \frac{r(B_i)^{(s-\gamma) q}}{\phi(r(B_i))}\mu(B_i^*)^{-q/p}  [u]_{W_{\tau}^{s,p}(B_i^{**},w_{\Phi,\gamma p})}^q,
\end{split}
\]
so the second sum in \eqref{th31} is bounded by the sum
\[
\begin{split}
\sum_{{B}\in W_M}\phi(r(B))\ell[\mathcal{C}&(B^*)]^{q-1}\mu(B)\left[\sum_{j=0}^k \frac{r(B_j)^{(s-\gamma)q}}{\phi(r(B_j))}\mu(B_j^*)^{-q/p}[u]_{W_{\tau}^{s,p}(B_j^{**},w_{\Phi,\gamma p})}^q \right].
\end{split}
\]
Rearranging the sum as in \cite{HV}, we get 
\[
\begin{split}
&\sum_{B\in W_M}\int_{B^*}|u_{B^*}-u_{B_0^*}|^qd\mu(x)\\&\lesssim \sum_{E\in W_M}\sum_{B\in E(W_M)}r(E)^{(s-\gamma) q}\frac{\phi(r(B))}{\phi(r(E))}\ell[\mathcal{C}(B^*)]^{q-1}\frac{\mu(B)}{\mu(E)^{q/p}} [u]_{W_{\tau}^{s,p}(E^{**},w_{\Phi,\gamma p})}^q.
\end{split}
\]
Now, H\"older's inequality together with the hypothesis allow us to bound the sum above by $[u]_{W_{\tau}^{s,p}(\Omega,v_{\Phi,\gamma p})}^q$ times the following expresion
\[
\begin{split}
&\left[\sum_{E\in W_M}\left(\sum_{B\in E(W_M)}r(E)^{(s-\gamma) q}\frac{\phi(r(B))}{\phi(r(E))}\ell[\mathcal{C}(B^*)]^{q-1}\frac{\mu(B)}{\mu(E)^{q/p}}\right)^{\frac{p}{p-q}}\right]^{\frac{p-q}{p}} ,
\end{split}
\]
and the result follows.
\end{proof}
\begin{obs}
Observe that, in the case $0\leq\gamma<s$, the constant in the obtained Poincar\'e inequality depends on the size of the domain $\Omega$. This also happens if one thinks of a nonimproved version of the result, as one can check in the proof of Theorem 3.1 in \cite{HV}.
\end{obs}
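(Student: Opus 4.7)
The plan is to read the size-dependence directly off the proof of Theorem \ref{sufi} just completed, since the remark merely describes a quantitative feature of the constant produced there; no new theorem is needed, only an inspection. First, I would revisit the bound for the first sum in the decomposition \eqref{th31}. After applying Lemma \ref{lema2.2} with $\rho = C_M\tau$, using the uniform overlap of the enlarged Whitney balls, and pulling the maximum radius out of the sum via the crude estimate $r(B)^{(s-\gamma)p} \leq \diam(\Omega)^{(s-\gamma)p}$, the estimate
\[
\sum_{B \in W_M} \int_{B^*} |u(x) - u_{B^*}|^q w_\phi(x)\, d\mu(x) \lesssim \mu(\Omega)^{(p-q)/p}\, \diam(\Omega)^{(s-\gamma)q}\, [u]_{W_\tau^{s,p}(\Omega, v_{\Phi,\gamma p})}^{q}
\]
is exactly what appears in the proof. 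The factor $\diam(\Omega)^{(s-\gamma)q}$ is nontrivial precisely when $\gamma < s$, which is what forces the overall constant to scale with the size of $\Omega$.

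Next, I would check that the second sum in \eqref{th31} does not allow this factor to be absorbed. Its bound is controlled by the hypothesized series of the theorem, which itself scales with the radii $r(E)^{(s-\gamma)q}$, so rescaling $\Omega$ rescales this series as well. The case $p = q$ makes the point transparent: the measure factor $\mu(\Omega)^{(p-q)/p}$ collapses to $1$, and the first-sum bound reduces cleanly to $\diam(\Omega)^{(s-\gamma)p}\,[u]_{W_\tau^{s,p}(\Omega, v_{\phi,\gamma p})}^{p}$, so the diameter dependence survives in isolation. Nothing in the chain argument offers a way to trade $r(E)^{(s-\gamma)q}$ against the weight $\phi(r(E))$, since $\phi$ is only assumed positive, increasing and doubling; in general the ratio $\phi(r(B))/\phi(r(E))$ need not decay in $r(E)$.

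Finally, to justify the parenthetical remark about the nonimproved version, I would point to the corresponding step in the proof of \cite[Theorem 3.1]{HV}, where the same Whitney-chain strategy is carried out in $\mathbb{R}^n$ without distance-to-boundary weights, that is, effectively with $\gamma = 0$ and $\phi \equiv 1$; the analogous step there produces a factor $\diam(\Omega)^{sq}$ by exactly the same radius-extraction argument. Hence the same $\diam(\Omega)$-dependence already appears in the unweighted Euclidean setting whenever $s > 0$. The only subtlety worth articulating is that this dependence is intrinsic to the Whitney-chain strategy, not an artifact of introducing the weights $w_\phi$ and $v_{\Phi,\gamma p}$; this is the expected obstacle, and it is resolved simply by the direct inspection above rather than by any deeper argument.
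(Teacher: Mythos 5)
Your reading is correct and is exactly what the remark is pointing at: the factor $\diam(\Omega)^{(s-\gamma)q}$ enters when $r(B)^{(s-\gamma)p}$ is pulled out of the first sum in \eqref{th31} (together with $\mu(\Omega)^{(p-q)/p}$ from H\"older when $q<p$), it is nontrivial precisely for $\gamma<s$, and the same radius-extraction step in the unweighted Euclidean argument of \cite[Theorem 3.1]{HV} yields the analogous $\diam(\Omega)^{sq}$. Since the remark is justified in the paper only by this same inspection of the proof of Theorem \ref{sufi}, nothing further is needed.
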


\section{An example of application of Theorem \ref{sufi}: the case of John domains in a complete metric space} \label{ejemplo suficiencia}

In this section we will prove a positive result for John domains in complete doubling metric spaces. We begin with a generalization of Lemma 2.7 in \cite{HV}. To this end, we will use the dyadic structure given by Hyt\"onen and Kairema and introduced in Section \ref{preliminares}. We will also use the concept of porous sets in a metric space.
\begin{defi}
A set $S$ in a metric space $(X,d)$ is porous in $X$ if for some $\kappa\in\left(0,1\right]$ the following statement is true: for every $x\in X$ and $0<r\leq 1$ there is  $y\in B(x,r)$ such that $B(y,\kappa r)\cap S=\emptyset$.
\end{defi}
Examples of porous sets are the boundaries of bounded John domains in a complete metric space (see \cite{MV} for the result in the Euclidean case and observe that Ascoli-Arzela's  theorem allows us to prove the result in a complete metric space).

\begin{lem}\label{porous}
Let  $(X,d,\mu)$ be a metric space endowed with a doubling measure, and let $S\subset X$ be a porous set. Let $1\leq p<\infty$. If $x\in S$ and $0<r\leq 1$, then
\[\int_{B(x,r)}\log^p\frac{1}{d(y,S)}d\mu(y)\leq c\mu(B(x,r))\left(1+\log^p\frac{1}{r}\right),\]
where the constant $c$ is independent of $x$ and $r$.
\end{lem}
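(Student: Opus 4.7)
The plan is to decompose $B(x,r)$ into dyadic shells around $S$ and exploit the fact that, in a doubling metric space, porous sets have polynomially small neighborhoods. First I would note that, since $x\in S$, every $y\in B(x,r)$ satisfies $d(y,S)\leq d(y,x)<r\leq 1$, so $\log(1/d(y,S))$ is nonnegative on $B(x,r)$. Define, for each integer $k\geq 0$, the shell
\[
A_k:=\{y\in B(x,r):2^{-k-1}r<d(y,S)\leq 2^{-k}r\},
\]
so that $B(x,r)=\bigcup_{k\geq 0}A_k$ up to a $\mu$-null set. On $A_k$ one has $\log(1/d(y,S))\leq (k+1)\log 2+\log(1/r)$, hence by $(a+b)^p\leq 2^{p-1}(a^p+b^p)$,
\[
\log^p\frac{1}{d(y,S)}\lesssim (k+1)^p+\log^p\frac{1}{r}.
\]

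The key ingredient is a measure estimate for the $t$-neighborhood of $S$: there exist $\alpha>0$ and $C>0$, depending only on the doubling constant of $\mu$ and on the porosity constant $\kappa$, such that for every $0<t\leq 1$,
\[
\mu\bigl(\{y\in B(x,r):d(y,S)<t r\}\bigr)\leq C\,t^{\alpha}\,\mu(B(x,r)).
\]
This is the standard ``porosity implies dimension gap'' estimate, which I would prove by iteration on scales. At scale $2^{-j}r$, cover the current bad set by a bounded-overlap family of balls $B(z_i,2^{-j}r)$ with $z_i\in S$; porosity (applied at scale $2^{-j}r$, which is $\leq 1$) produces inside each $B(z_i,2^{-j}r)$ a sub-ball $B(w_i,\kappa 2^{-j}r)$ disjoint from $S$, and its inner half $B(w_i,\kappa 2^{-j-1}r)$ contains only points with $d(\cdot,S)\geq \kappa 2^{-j-1}r$. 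Doubling gives $\mu(B(w_i,\kappa 2^{-j-1}r))\geq \eta\,\mu(B(z_i,2^{-j}r))$ for a fixed $\eta\in(0,1)$ depending only on the doubling constant and on $\kappa$, so passing from scale $j$ to scale $j+j_0$ (for a suitable $j_0=j_0(\kappa)$ chosen so that $\kappa 2^{-j-1}\geq 2^{-j-j_0}$) removes a definite proportion of the remaining mass. Iterating yields geometric decay in $k$, which rewrites as the claimed $t^\alpha$ bound with $\alpha:=-\log(1-\eta)/(j_0\log 2)$.

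Applying the estimate with $t=2^{-k}$ gives $\mu(A_k)\leq C 2^{-k\alpha}\mu(B(x,r))$. Summing the shell contributions,
\[
\int_{B(x,r)}\log^p\frac{1}{d(y,S)}\,d\mu(y)\lesssim \mu(B(x,r))\sum_{k=0}^{\infty}2^{-k\alpha}\left((k+1)^p+\log^p(1/r)\right),
\]
and since both $\sum_{k}2^{-k\alpha}(k+1)^p$ and $\sum_{k}2^{-k\alpha}$ are finite constants depending only on $p$ and on $\alpha$, the right-hand side is bounded by $c\,\mu(B(x,r))(1+\log^p(1/r))$, which is exactly the claim.

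The main obstacle is the middle paragraph: converting the qualitative metric property of porosity into a quantitative polynomial bound on the measure of neighborhoods of $S$. Once that estimate is established, the rest is a standard layer-cake computation with an exponentially converging sum.
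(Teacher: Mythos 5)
Your proof is correct in outline, but it follows a genuinely different route from the paper's. The paper works entirely with the Hyt\"onen--Kairema dyadic system: it introduces the class $\mathcal{C}_{S,\gamma}$ of dyadic cubes whose centers are within a fixed multiple of their radius from $S$, shows that for $y\in Q$ the quantity $\log\frac{1}{d(y,S)}$ is controlled (up to additive constants depending on $\diam Q$) by the number of cubes in $\mathcal{C}_{S,\gamma}$ that contain $y$ and sit inside $Q$, and then bounds the $L^p$ norm of that counting function by invoking a porosity result of Ihnatsyeva--V\"ah\"akangas which lets one replace the overlapping family $\{\chi_R\}_{R\in\mathcal{C}_{S,\gamma}}$ by a bounded number of \emph{disjoint} subfamilies. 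You instead slice $B(x,r)$ into dyadic annuli $A_k$ according to the value of $d(y,S)$, bound the integrand uniformly on each shell, and control $\mu(A_k)$ by the polynomial decay $\mu(\{y\in B(x,r):d(y,S)<tr\})\lesssim t^{\alpha}\mu(B(x,r))$, which you then derive by a scale-by-scale mass-removal iteration using porosity and doubling. Both routes rest on a nontrivial external input: the paper leans on \cite[Theorem~2.10]{IV}, while you lean on the ``porosity implies a measure gap'' estimate. Your version is conceptually more elementary (a layer-cake computation once the decay estimate is in hand) and cleanly isolates where porosity enters, whereas the paper's version plugs directly into dyadic machinery it has already set up and reuses elsewhere. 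One caveat: your sketch of the iteration step has small inaccuracies -- the sub-ball $B(w_i,\kappa 2^{-j}r)$ need not lie inside $B(z_i,2^{-j}r)$, the removed sub-balls may spill into a slightly larger neighborhood of $S$ than the current bad set, and overlap of the $B(w_i,\cdot)$ must be accounted for -- but these are standard technicalities (apply porosity at a slightly smaller radius, work in a dilate of $B(x,r)$, and use bounded overlap of the covering), and the polynomial decay itself is a well-known consequence of porosity for doubling measures, so the approach goes through.
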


\begin{proof}
We follow here the notations of Theorem \ref{dyadiccubes}. Let us define
\[\mathcal{C}_{S,\gamma}:=\{Q\in\mathcal{D}:\gamma^{-1}d(z_Q,S)\leq r(b(Q))\leq r(B(Q))\leq 1\},\]
where $z_Q$ is the center of the two balls $b(Q)$ and $B(Q)$ associated to the dyadic cube $Q$.

Suppose $R\in \mathcal{D}$ is a dyadic cube such that $r(B(R))\leq 1$ and such that $d(y,S)\leq 2\left(\frac{C_1}{c_1s}\right)^2r(b(R))$ for some $y\in R$. Then, 
\begin{equation}\label{28}
\begin{aligned}
d(z_R,S)&\leq  d(z_R,y)+d(y,S)\leq r(B(R))+2\left(\frac{C_1}{c_1s}\right)^2r(b(R))\\
&\leq \frac{C_1}{c_1}r(b(R))+2\left(\frac{C_1}{c_1s}\right)^2r(b(R))
= \left(\frac{C_1}{c_1}+2\left(\frac{C_1}{c_1s}\right)^2\right)r(b(R)),
\end{aligned}
\end{equation}

so we have $R\in \mathcal{C}_{S,\gamma}$ for $\gamma =\frac{C_1}{c_1}+2\left(\frac{C_1}{c_1s}\right)^2$.

Fix $j$ a nonnegative integer such that $C_1s^{j+1}\leq r< C_1s^{j}$, and consider a dyadic cube $Q\in \mathcal{D}_{j+1}$ for which $Q\cap B(x,r)\neq \emptyset$. We can cover $B(x,r)$ with cubes like this, as $\mathcal{D}_{j+1}$ partitions $X$, so it will be enough to prove that, for any of these cubes $Q$, we can get
\[ \left\|\log \frac{1}{d(\cdot,S)}\right\|_{L^p_\mu(Q\cap B(x,r))}^p\lesssim \mu(Q\cap B(x,r))\left(1+\log^p\frac{1}{r}\right),\]
as we are working in a doubling metric space, which implies that the size of any covering of $B$ by cubes of the size stated above, is uniformly bounded.

By the porosity of $S$ we know that, since $\mu$ is doubling, $S$ has zero $\mu$-measure \cite[Proposition 3.4]{JJKRRS}, so it is enough to consider points $y\in Q\cap B(x,r)\backslash S$.  Since $x\in S$, we have that
\[
d(y,S)\leq d(y,x)\leq r\leq {C_1} s^{j}\leq C_1(c_1s)^{-1}\diam Q, 
\]
i.e.,
\begin{equation}\label{210}1\leq C_1(c_1s)^{-1}\frac{\diam Q}{d(y,S)}.
\end{equation}
Consider now a sequence of dyadic cubes $Q=Q_0(y)\supset Q_1(y)\supset\cdots\supset Q_m(y),$ each of them containing $y$ and $Q_i(y)$ and $Q_{i+1}(y)$ being immediate ancestor and son, respectively. This, in particular, means that
\begin{equation}\label{211}
\frac{2C_1}{c_1}s^{-1}\geq \frac{\diam B(Q_i(y))}{\diam b(Q_{i+1}(y))}\geq\frac{\diam Q_i(y)}{\diam Q_{i+1}(y)}\geq \frac{\diam b(Q_i(y))}{\diam B(Q_{i+1}(y))}\geq \frac{c_1}{2C_1}s^{-1}.
\end{equation}

We choose $m$ such that the last cube in the sequence satisfies
\begin{equation}\label{212}
\frac{c_1 s^2}{C_1}d(y,S)\leq \diam Q_m(y)< \frac{c_1s}{C_1} d(y,S).
\end{equation}
From \eqref{210} it follows that $m\geq 1$, and by \eqref{211} and \eqref{212},
\[
\left(\frac{2C_1}{c_1}s^{-1}\right)^m\geq \prod_{i=0}^{m-1}\frac{\diam Q_i(y)}{\diam Q_{i+1}(y)}=\frac{\diam Q_0(y)}{\diam Q_m(y)}>C_1(c_1s)^{-1}\frac{\diam Q}{d(y,S)}\geq 1.
\]
Thus, 
\begin{equation}\label{213}
\begin{aligned} m&\geq \log_{\frac{2C_1}{c_1s}}\left(\frac{2C_1}{c_1s}\right)^m
=\frac{1}{\log \left(\frac{2C_1}{c_1s}\right)}\log \left(\frac{2C_1}{c_1s}\right)^m\\
&\geq\frac{1}{\log \left(\frac{2C_1}{c_1s}\right)}\left[ \log\frac{C_1\diam Q}{c_1s}-\log d(y,S)\right]\geq0. 
\end{aligned}
\end{equation}
Furthermore, \eqref{212} and \eqref{28} yield $Q_i(y)\in \mathcal{C}_{S,\gamma}$ for $i=0,1,\ldots,m$ when $\gamma$ is as in \eqref{28}. Thus, we obtain
\[
\begin{split}
\sum_{\underset{R\subset Q}{R\in \mathcal{C}_{S,\gamma}}} \chi_R(y)&\geq 1+m\geq 1+\frac{1}{\log \left(\frac{2C_1}{c_1s}\right)}\left[ \log\frac{C_1\diam Q}{c_1s}-\log d(y,S)\right] \\
&\geq \frac{1}{\log \left(\frac{2C_1}{c_1s}\right)}\left[ \log \left(\frac{2C_1}{c_1s}\right)+\log\frac{C_1\diam Q}{c_1s}-\log d(y,S) \right]\geq0,
\end{split}
\]
as $2C_1>s c_1$.

If we now integrate and apply triangle inequality, we get
\[\begin{split}
&\left\|\log\frac{1}{d(\cdot,S)}\right\|_{L^{p}_\mu(Q\cap B(x,r))}\\
&\leq \left|\log \left(\frac{2C_1^2\diam Q}{(c_1s)^2}\right)\right|\mu(Q\cap B(x,r))^{\frac{1}{p}}+\log \left(\frac{2C_1}{c_1 s}\right)\left\|\sum_{\underset{R\subset Q}{R\in \mathcal{C}_{S,\gamma}}} \chi_R\right\|_{L^{p}_\mu(Q\cap B(x,r))}\\
&\lesssim\left(1+\log\frac{C_1}{c_1s\diam Q}\right)\mu(Q\cap B(x,r))^{\frac{1}{p}}+\left\|\sum_{\underset{R\subset Q}{R\in \mathcal{C}_{S,\gamma}}} \chi_R\right\|_{L^{p}_\mu(Q\cap B(x,r))}\\
&=\left(1+\log\frac{C_1}{c_1s \diam Q}\right)\mu(Q\cap B(x,r))^{\frac{1}{p}}+\left\|\sum_{\underset{R\subset Q}{R\in \mathcal{C}_{S,\gamma}}} \chi_R\right\|_{L^{p}_\mu(Q\cap B(x,r))},
\end{split}\]
where we have used that $2C_1>ec_1s$ (this is immediate by taking into account the definitions of $C_1,c_1$ and $s$ in Theorem \ref{dyadiccubes}), and the fact that, in particular, $\diam Q\leq C_1s^{j+1}\leq r<1$.
Now observe that 
\[\diam Q\geq c_1s^{j+1}=\frac{c_1s}{C_1}C_1s^j>\frac{c_1s}{C_1}r,\]
so 
\[\begin{split}
&\left\|\log\frac{1}{d(\cdot,S)}\right\|_{L^{p}_\mu(Q\cap B(x,r))}\\
&\lesssim \left(1+\log\frac{1}{r}\right)\mu(Q\cap B(x,r))^{1/p}+\left\|\sum_{\underset{R\subset Q}{R\in \mathcal{C}_{S,\gamma}}} \chi_R\right\|_{L^{p}_\mu(Q\cap B(x,r))}.
\end{split}\]

Since $S$ is porous in $X$, we can follow the proof of \cite[Theorem 2.10]{IV} in our context. We obtain a finite positive constant $K_\kappa$, depending on the porosity constant $\kappa$, and families 
\[
\{\hat{R}\}_{R\in \mathcal{C}_{S,\gamma}^k},\quad \mathcal{C}_{S,\gamma}^k\subset \mathcal{C}_{S,\gamma},\qquad k=0,1,\ldots,K_\kappa-1,
\]
where each $\{\hat{R}\}_{R\in \mathcal{C}_{S,\gamma}^k}$ is a disjoint family of cubes $\hat{R}\subset R$, such that
\[\begin{split}
\left\|\sum_{\underset{R\subset Q}{R\in \mathcal{C}_{S,\gamma}}} \chi_R\right\|_{L^{p}_\mu(Q\cap B(x,r))}&\lesssim \sum_{k=0}^{K_\kappa-1}\left\|\sum_{\underset{R\subset Q}{R\in \mathcal{C}^k_{S,\gamma}}} \chi_{\hat{R}}\right\|_{L^{p}_\mu(Q\cap B(x,r))}\\
&\leq \sum_{k=0}^{K_\kappa-1}\|\chi_Q\|_{L^p_\mu(Q\cap B(x,r))}\lesssim \mu(Q\cap B(x,r))^{1/p}.
\end{split}
\]
Hence, we get
\[\begin{split}
\left\|\log\frac{1}{d(\cdot,S)}\right\|_{L^{p}_\mu(Q\cap B(x,r))}
&\lesssim \left(1+\log\frac{1}{r}\right)\mu(Q\cap B(x,r))^{1/p},\end{split}\]
which finishes the proof.
\end{proof}

Following  \cite{HV} it is possible to construct a chain decomposition of a given John domain $\Omega$ on a metric space in such a way that, for a given ball $B\in W_M$ in a Whitney covering of the domain, 
we have that the chain associated to $B$ satisfies 
\begin{equation}\label{cadenas}
\begin{aligned}
\ell[\mathcal{C}(B^*)]&\lesssim\left(1+\log\frac{1}{\diam B}\right). 
\end{aligned}
\end{equation}
Using this result and the fact that a John domain in a complete metric doubling space has porous boundary, we can prove an $(w_\phi,v_{\phi,\gamma p})$-improved fractional $(p,p)$-Poincar\'e inequality on John domains.
\begin{thm}
  Let $1\leq p<\infty$,  $\tau,s\in(0,1)$ and $0\leq \gamma< s$. Let $\phi$ be a positive increasing function and define $w_\phi$ and $v_{\phi,\gamma p}$ as in Theorem \ref{sufi}. A John domain $\Omega$ in a complete doubling metric space $(X,d,\mu)$ supports the $(w_\phi,v_{\phi,\gamma p})$-improved fractional $(p,p)$-Poincar\'e inequality.
\end{thm}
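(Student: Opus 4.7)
The plan is to invoke Theorem \ref{sufi}(2), so the real work is verifying its supremum hypothesis \eqref{suficp}. Two facts about our setting are essential. First, since $\Omega$ is a John domain in a complete doubling metric space, its boundary is porous (the Ascoli--Arzel\`a argument used in \cite{MV} in the Euclidean case extends to the complete metric setting), so Lemma \ref{porous} is at our disposal. Second, by the construction summarized in \eqref{cadenas}, we may choose the chain decomposition so that $\ell[\mathcal{C}(B^*)]\lesssim 1+\log(1/\diam B)$ for every $B\in W_M$.

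Fix $E\in W_M$ and study
\[
\Sigma_E := \sum_{B\in E(W_M)}r(E)^{(s-\gamma)p}\frac{\phi(r(B))}{\phi(r(E))}\ell[\mathcal{C}(B^*)]^{p-1}\frac{\mu(B)}{\mu(E)}.
\]
For $B$ in the shadow of $E$ the chain structure forces $r(B)\lesssim r(E)$, so the growth condition on $\phi$ inherited from Theorem \ref{sufi} gives $\phi(r(B))/\phi(r(E))\lesssim 1$. Combining this with \eqref{cadenas}, the Whitney property $\diam B\asymp d(x,\partial\Omega)$ for $x\in B$, and the bounded overlap of the Whitney balls, I would rewrite the remaining sum as an integral and arrive at
\[
\Sigma_E\lesssim \frac{r(E)^{(s-\gamma)p}}{\mu(E)}\int_{S_E}\left(1+\log\frac{1}{d(x,\partial\Omega)}\right)^{p-1}d\mu(x),\qquad S_E:=\bigcup_{B\in E(W_M)}B.
\]

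To close, the aim is to show that this integral is $\lesssim \mu(E)\bigl(1+\log^p(1/r(E))\bigr)$. Using $(1+t)^{p-1}\lesssim 1+t^p$ for $t\geq 0$ and $p\geq 1$, this would follow from Lemma \ref{porous} applied on a boundary-centered ball $B(x^*,Cr(E))$ containing $S_E$, with $x^*\in\partial\Omega$ at distance $\asymp r(E)$ from $E$, because by doubling $\mu(B(x^*,Cr(E)))\lesssim \mu(E)$. Granting this step,
\[
\Sigma_E\lesssim r(E)^{(s-\gamma)p}\Bigl(1+\log^p\tfrac{1}{r(E)}\Bigr),
\]
and since $s>\gamma$ and $r(E)\leq \diam\Omega$, the function $t\mapsto t^{(s-\gamma)p}(1+\log^p(1/t))$ is bounded on $(0,\diam\Omega]$, so the supremum \eqref{suficp} is finite and Theorem \ref{sufi}(2) applies.

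The hardest step is the geometric one: showing that $S_E$ sits inside a boundary-centered ball whose measure is controlled by $\mu(E)$. In the Euclidean case this comes out of a John cone argument using the John constant; the metric-space extension should be obtained by combining the John chains produced by Theorem \ref{chain condition} with the Whitney data of Remark \ref{dilatadas y cadenas}, in the spirit of \cite[Theorem 3.1]{HV}. If that containment turned out to require relaxing, a fallback is to decompose $S_E$ into annular pieces around a well-chosen $x^*\in\partial\Omega$ and apply Lemma \ref{porous} scale by scale, using that the log-factor is integrable against the doubling measure near the boundary; the factor $r(E)^{(s-\gamma)p}$ with $s>\gamma$ is precisely what makes these local bounds combine into a uniform one in $E$.
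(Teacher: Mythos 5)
Your proposal takes essentially the same route as the paper: verify the supremum condition \eqref{suficp} of Theorem \ref{sufi}(2) by combining the chain-length bound \eqref{cadenas} and the Whitney comparison $\diam B\asymp d(\cdot,\partial\Omega)$ to rewrite the sum over the shadow $E(W_M)$ as an integral over a boundary-centered ball, then invoke Lemma \ref{porous} (after upgrading $(1+t)^{p-1}$ to $1+t^p$) and finish with the elementary fact that $t\mapsto t^{(s-\gamma)p}\bigl(1+\log^p(1/t)\bigr)$ is bounded near $0$ because $s>\gamma$. The ``hardest step'' you flag --- containment of $S_E$ in a ball $B(\omega_E, c\diam E)$ with $\mu$-measure comparable to $\mu(E)$ by doubling --- is handled in the paper by the same geometric observation (a ball in the shadow of $E$ lies farther from $x_0$, hence closer to $\partial\Omega$, hence near $\omega_E$), so there is no real gap; the only small detail the paper adds is normalizing $\diam\Omega\leq 1$ and writing $\min\{1,c\diam E\}$ so that the radius fed to Lemma \ref{porous} stays $\leq 1$.
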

\begin{proof} 
We may assume $\diam \Omega\leq1$. 
We will check condition \eqref{suficp} of Theorem \ref{sufi}. If $E$ is a ball in $W_M$, then
\[\bigcup_{B\in E(W_M)}B\subset B(\omega_E,\min\{1, c\diam(E)\}),\]
where $\omega_E$ is the closest point in $\partial\Omega$ to $x_E$ and $c$ is a positive constant independent of $E$. This follows from the fact that, if $B\in E(W_M)$, then $E$ is closer to $x_0$ than $B$, so $B$ is closer to $\partial\Omega$ than $E$ (recall that $\diam E\asymp d(x_E,\partial \Omega)$).

Using this and \eqref{cadenas}, we obtain
\[
\begin{split}
\sum_{B\in E(W_M)}&\phi(r(B))\ell[\mathcal{C}(B^*)]^{p-1}\mu(B)\\
&\lesssim \sum_{B\in E(W_M)}\phi(r(B))\mu(B)\left(1+\log\frac{1}{\diam B}\right)^{p-1}\\
&\lesssim \sum_{B\in E(W_M)}\phi(r(B))\mu(B)\left(1+\log^p\frac{1}{\diam B}\right)\\
&\lesssim \sum_{B\in E(W_M)}\int_B\phi(r(B))\left(1+\log^p\frac{1}{d(y,\partial \Omega)}\right)d\mu(y)\\
&\lesssim \int_{B(\omega_E,\min\{1,c\diam E\})}\phi(r(E))\left(1+\log^p\frac{1}{d(y,\partial \Omega)}\right)d\mu(y).
\end{split}
\]
Since $X$ is a complete space, the boundary of the John domain $\Omega$ is porous in $S$, so we can apply Lemma \ref{porous} to $\omega_E$ with $r=\min\{1,c\diam E\}$ in order to obtain
\[
\begin{split}
\sum_{B\in E(W_M)}&\phi(r(B))\ell[\mathcal{C}(B^*)]^{p-1}\mu(B)\\
&\lesssim \phi(r(E))\mu[B(\omega_E,\min\{1,c\diam E\})]\left(1+\log^p\frac{1}{\min\{1,c\diam E\}}\right).
\end{split}
\]
Thus, we can check \eqref{suficp} for $\Omega$, obtaining
\[
\begin{split}
\sup_{E\in W_M}&\sum_{B\in E(W_M)}\frac{\phi(r(B))}{\phi(r(E))}\ell[\mathcal{C}(B^*)]^{p-1}r(E)^{(s-\gamma) p}\frac{\mu(B)}{\mu(E)}\\
&\lesssim\sup_{E\in W_M} \frac{\phi(r(E))}{\phi(r(E))}r(E)^{(s-\gamma) p}\left(1+\log^p\frac{1}{\diam E}\right)\frac{\mu[B(\omega_E,\min\{1,c\diam E\})]}{\mu(E)}\\
&\lesssim\sup_{E\in W_M} r(E)^{(s-\gamma) p}\left(1+\log^p\frac{1}{\diam E}\right)\left(\frac{\min\{1,2cr(E)\}}{r(E)}\right)^{n_\mu}\\
&<\infty,
\end{split}
\]
where we have used the doubling condition on $\mu$ and the fact that the ball $E$ is in the ball $B(\omega_E,\min\{1,c\diam E\})$. This last sum is finite as  $1+\log^p\frac{1}{t}\lesssim \frac{1}{t^{\eta p}}$, for  $0<\eta<s-\gamma$ if $t<1$.
\end{proof}
\begin{obs}
The growth condition on $\phi$ is not necessary. Moreover, the result also holds for a function $\phi$ satisfying $\phi(2t)\leq t^\delta \phi(t)$, where $\delta>\gamma-s$ (the case $\gamma=s$ is allowed here).

Also, it is interesting to compare this result with Theorem \ref{(p,p)} in the sense that here we just need $\mu$ to be a doubling measure in order to get the inequality, whereas in Theorem \ref{(p,p)} one asks $\mu$ to be a more regular measure.
\end{obs}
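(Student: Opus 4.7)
The plan is to revisit the proof of the theorem above and pinpoint exactly where the hypothesis $\phi(2x)\le C\phi(x)$ is used. Once located, the argument will show that (i) for $\gamma<s$ the monotonicity of $\phi$ already suffices, and (ii) the weaker assumption $\phi(2t)\le t^{\delta}\phi(t)$ with $\delta>\gamma-s$ handles the general case, including $\gamma=s$.

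First, I would trace through how Theorem \ref{sufi} is applied in the preceding proof. The growth hypothesis was invoked only in order to pass freely between $\phi(d(y))$ and $\phi(r(B^*))$ for $y$ in a Whitney dilate, where $d(y)\asymp r(B^*)$ holds up to a fixed constant depending on the Whitney parameter $M$. A careful reread of the computation in \eqref{suficp} shows that every estimate of $\phi$ is already carried out at a Whitney radius rather than at a free point, and the only inequality used is $\phi(r(B))\le\phi(r(E))$ for $B\in E(W_M)$, which is immediate from monotonicity because the shadow construction forces $r(B)\le r(E)$ up to Whitney constants. The chain-length bound $\ell[\mathcal{C}(B^*)]\lesssim 1+\log(1/\diam B)$ and Lemma \ref{porous} then conclude the proof as before, with the factor $r(E)^{(s-\gamma)p}$ absorbing the logarithmic term via $1+\log^p(1/t)\lesssim t^{-\eta p}$ for any $\eta\in(0,s-\gamma)$. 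This yields the claim without any growth assumption on $\phi$.

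Second, to accommodate the weaker condition $\phi(2t)\le t^{\delta}\phi(t)$, I would iterate it along the Whitney geometry: each doubling of scale contributes a factor of $t^\delta$, so comparing $\phi(r(B))$ with $\phi(\lambda r(B))$ for any fixed $\lambda\ge 1$ introduces an extra factor behaving like $r(B)^{\delta}$ up to uniform constants. Plugged into the verification of \eqref{suficp}, this shifts the effective exponent of $r(E)$ from $(s-\gamma)p$ to $(s-\gamma+\delta)p$, and the logarithmic absorption $1+\log^p(1/t)\lesssim t^{-\eta p}$ remains available for any $\eta\in(0,s-\gamma+\delta)$. The constraint $\delta>\gamma-s$ makes this interval nonempty, and when $\gamma=s$ the strictly positive $\delta$ supplies precisely the decay that was previously furnished by $r(E)^{(s-\gamma)p}$.

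The main technical obstacle is the bookkeeping of the iteration, since the condition $\phi(2t)\le t^{\delta}\phi(t)$ no longer forces monotonicity and must be iterated over a bounded number of scales while keeping the resulting constants uniform in the Whitney ball. Once this is controlled, the rest of the argument is unchanged, and the second assertion of the remark --- that only the doubling of $\mu$ is needed, in contrast with Theorem \ref{(p,p)} --- is automatic, because the entire proof relies only on Lemma \ref{porous}, the Whitney covering, and the chain construction, none of which use reverse doubling or Ahlfors--David regularity of $\mu$.
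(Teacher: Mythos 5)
Your handling of the last two assertions is sound: the observation that only the doubling of $\mu$ enters is correct, since Theorem \ref{sufi}, Lemma \ref{porous} and the chain construction never invoke reverse doubling or Ahlfors--David regularity; and the iteration of $\phi(2t)\le t^{\delta}\phi(t)$ over the boundedly many doublings of scale forced by the Whitney/John geometry is the right mechanism for the second assertion (although note that $k$ doublings cost $t^{k\delta}$ rather than $t^{\delta}$, so the effective exponent is $(s-\gamma)p+k\delta$ and the stated threshold $\delta>\gamma-s$ is only accurate up to the value of $k$; note also that monotonicity of $\phi$ is still assumed in the remark, so your concern about non-monotone $\phi$ is moot).

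The genuine gap is in your justification of the first assertion. You write that $\phi(r(B))\le\phi(r(E))$ is ``immediate from monotonicity because the shadow construction forces $r(B)\le r(E)$ up to Whitney constants.'' But ``up to constants'' is precisely the problem: the containment $\bigcup_{B\in E(W_M)}B\subset B(\omega_E,c\,\diam E)$ only yields $r(B)\le 2c\,r(E)$ with a constant $c>1$ depending on the John constant (for a chain one only gets $d(x_B)\le(1+c_J)d(x_E)$), so monotonicity gives $\phi(r(B))\le\phi(2c\,r(E))$, and passing from $\phi(2c\,r(E))$ to $C\phi(r(E))$ is exactly the growth condition you are trying to remove. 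The same issue arises inside the proof of Theorem \ref{sufi} itself, where $\phi(d(x))$ for $x\in B^{*}$ must be compared with $\phi$ evaluated at points of $B^{**}$: all of these arguments agree only up to fixed multiplicative constants larger than $1$. For a general positive increasing $\phi$ the ratio $\phi(Ct)/\phi(t)$ can blow up faster than any power of $1/t$ (take $\phi(t)=e^{-1/t}$), so neither monotonicity nor the reserve factor $r(E)^{(s-\gamma)p}$ closes this gap. The coherent reading of the remark is that its first sentence is subsumed by its second: what actually replaces the growth condition is the weaker hypothesis $\phi(2t)\le t^{\delta}\phi(t)$ with $\delta>\gamma-s$, under which each of the finitely many doublings costs a controlled power of $r(E)$ that is absorbed by $r(E)^{(s-\gamma)p}$ (or, when $\gamma=s$, by the strict positivity of $\delta$). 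To keep the first assertion as a separate, unconditional claim you would have to rework the chains and dilations so that every comparison $\phi(a)\le\phi(b)$ occurs with $a\le b$ exactly, which the present construction does not provide.
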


\section*{Acknowledgements}
The third author is supported by the Basque Government through the BERC 2018-2021 program and by Spanish Ministry of Science, Innovation and Universities through BCAM Severo Ochoa accreditation SEV-2017-0718. He is also supported by MINECO through the MTM2017-82160-C2-1-P project funded by (AEI/FEDER, UE), acronym ``HAQMEC'' and through ''la Caixa'' grant. The author also wants to thank IMAS (CONICET \& UBA) for its support for the author's stay at Buenos Aires in December 2017.

\printbibliography

\end{document}